\newtheorem{theorem}{Theorem}[section]
\newtheorem{lemma}[theorem]{Lemma}
\newtheorem{corollary}[theorem]{Corollary}
\newtheorem{proposition}[theorem]{Proposition}
\theoremstyle{definition}
\newtheorem{definition}[theorem]{Definition}
\theoremstyle{remark}
\newtheorem{remark}[theorem]{Remark}
\numberwithin{equation}{section}
\DeclareMathOperator{\Stab}{Stab}
\DeclareMathOperator{\ch}{char}
\DeclareMathOperator{\Gal}{Gal}
\DeclareMathOperator{\Frob}{Frob}
\DeclareMathOperator{\res}{Res}
\DeclareMathOperator{\FPP}{FPP}
\DeclareMathOperator{\Norm}{N}
\DeclareMathOperator{\Aut}{Aut}
\DeclareMathOperator{\Rat}{ {Rat} }
\newcommand{\fp}{ {\mathfrak p} }
\newcommand{\fq}{ {\mathfrak q} }
\newcommand{\fG}{ {\mathfrak G} }
\newcommand{\fP}{ {\mathfrak P} }
\newcommand{\fo}{ {\mathfrak o} }
\newcommand{\cP}{ {\mathcal P} }
\newcommand{\cV}{ {\mathcal V} }
\newcommand{\cL}{ {\mathcal L} }
\newcommand{\cW}{ {\mathcal W} }
\newcommand{\cS}{ {\mathcal S} }
\newcommand{\cH}{ {\mathcal H} }
\newcommand{\lra} {\longrightarrow}
\newcommand{\bA} { {\mathbb A}} 
\newcommand{\F} { {\mathbb F}} 
\newcommand{\bP} { {\mathbb P}} 
\newcommand{\bN} { {\mathbb N}} 
\newcommand{\bQ} { {\mathbb Q}} 
\newcommand{\bC} { {\mathbb C}}
\newcommand{\bR} { {\mathbb R}}
\begin{document}

\title[Iterates of generic polynomials and rational functions]{Iterates of generic polynomials and generic rational functions}


\author{J.~Juul}
\address{
	Department of Mathematics and Statistics\\
	Amherst College\\
	Amherst, MA 01002\\
	USA
}
\email{jamie.l.rahr@gmail.com}
\thanks{NSF grant DMS-1200749}

\subjclass[2010]{Primary 37P05;
	Secondary 11G35, 14G25, 12F10}

\date{}

\dedicatory{Dedicated to R.W.K. Odoni}

\begin{abstract}
	In 1985, Odoni showed that in characteristic $0$ the Galois group of the $n$-th iterate of the generic polynomial with degree $d$ is as large as possible. That is, he showed that this Galois group is the $n$-th wreath power of the symmetric group $S_d$. We generalize this result to positive characteristic, as well as to the generic rational function. These results can be applied to prove certain density results in number theory, two of which are presented here. This work was partially completed by the late R.W.K. Odoni in an unpublished paper.
\end{abstract}

\maketitle

Several of the results proven in this paper were stated and proven by R.W.K. Odoni in an unpublished preprint, including the polynomial versions of the Galois theoretic results and the application presented in Section \ref{application}. Although the results and arguments given by Odoni in that manuscript are presented somewhat differently here, his work on this project was invaluable in the completion of this paper.

\section{Introduction}

Given a field $K$ and a rational function $\varphi\in K(x)$, we can form a sequence of fields by adjoining the roots of successive iterates of $\varphi$. The Galois groups of these field extensions have been studied since the 1980's beginning with the work of R.W.K. Odoni \cite{odoni}. The area has seen a recent surge of interest due to its many applications to density questions in number theory \cite{Stoll}\cite{Jones2}\cite{HJM}\cite{JM}\cite{JKMT}. 

In his original paper \cite{odoni}, Odoni showed that if $K$ is a number field, or more generally  a Hilbertian field with characteristic $0$, then for any $n$, most polynomials will have the property that the Galois group of the field extension formed by adjoining the roots of the $n$-th iterate is as large as possible. This follows directly from his result that the generic polynomial over any field of characteristic $0$ has this property.  We generalize this result to rational functions defined over Hilbertian fields with arbitrary characteristic and polynomial functions defined over Hilbertian fields when the degree of the polynomial and the characteristic of the field are not both 2.  A \textit{Hilbertian field} is a field $K$ in which for any irreducible polynomial $f(t_1,\ldots,t_r,x)\in K[t_1,\ldots,t_r,x]$ there exists $a_1,\ldots, a_r\in K$ such that $f(a_1, \ldots, a_r, x)$ is irreducible in $K[x]$. Examples of Hilbertian fields include $\bQ$, number fields, and finite extensions of $k(t)$ for any field $k$ \cite{FJ}.

If $\varphi(x)$ is any rational function in $K(x)$, where $K$ is a field, $\Gal(\varphi(x)/K)$ will denote the Galois group of the splitting field of $\varphi(x)$ over $K$. 
Let $k$ be any field and let $s_0,s_1,\ldots, s_{d-1},u_0,u_1,\ldots,u_d,x$ be independent indeterminants over $k$. The polynomial $$\fG(x)=x^d+s_{d-1}x^{d-1}+\dots+s_0$$ is the \textit{generic monic polynomial} of degree $d$ over $k$. The rational function $$\Phi(x)=\frac{x^d+s_{d-1}x^{d-1}+\ldots+s_0}{u_dx^d+u_{d-1}x^{k-1}+\ldots+u_0}$$ is the \textit{generic rational function} of degree $d$ over $k$. 

Define the field $k(\textbf{s})$ by $k(\textbf{s}):=k(s_0,s_1,\ldots, s_{d-1})$ and similarly define $k(\textbf{s,u}):=k(s_0,s_1,\ldots, s_{d-1},u_0,u_1,\ldots,u_d)$. In \cite{odoni}, Odoni shows that if $\ch k =0$, then $\Gal(\fG^n(x)/k(\textbf{s}))$ is isomorphic to $[S_d]^n$, the $n$-th wreath power of the symmetric group $S_d$. This group can be thought of as the group of automorphisms of the $d$-ary rooted tree up to the $n$-th level. Wreath products are defined in Section \ref{wreath}. 

Some of the arguments used in \cite{odoni} do not extend to positive characteristic, such as those dealing with the theory of monodromy groups on compact Riemann surfaces and branch points of algebraic functions over $\bC$. Here, we instead use algebraic and Galois theoretic arguments to show the following.

\begin{theorem} \label{generic} For any field $k$, $d > 1$, and $n\in \bN$, $\Gal(\Phi^n(x)/k(\textbf{s,u})) \cong [S_d]^n$ and if $(d,p)\neq (2,2)$, then
	$\Gal(\fG^n(x)/k(\textbf{s})) \cong [S_d]^n$.
\end{theorem}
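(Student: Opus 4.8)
The plan is to prove the theorem by induction on $n$, using the tower of fields
$$ k(\textbf{s,u}) \subseteq k(\textbf{s,u})(\Phi^{-1}(\alpha)) \subseteq \cdots $$
obtained by successively adjoining preimages. The base case $n=1$ amounts to showing that the Galois group of the numerator-minus-$t$-times-denominator polynomial $x^d + s_{d-1}x^{d-1} + \cdots + s_0 - t(u_d x^d + \cdots + u_0)$ over the appropriate function field is the full symmetric group $S_d$; this is a standard fact, since the coefficients are themselves independent transcendentals (after a change of variables) and the generic polynomial of degree $d$ has Galois group $S_d$ over the field generated by its coefficients — provided $(d,p) \neq (2,2)$ in the polynomial case, where in characteristic $2$ the degree-$2$ generic polynomial $x^2 + s_1 x + s_0$ is inseparable issues aside actually an Artin–Schreier situation with group only $\bZ/2$, not all of $S_2$ — wait, $S_2 = \bZ/2$, so the genuine obstruction is separability/irreducibility, which is why that case is excluded.

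Next I would set up the inductive step. Assume $\Gal(\Phi^n(x)/k(\textbf{s,u})) \cong [S_d]^n$. Write $G_n$ for this group and let $L_n$ be the corresponding splitting field. The roots of $\Phi^{n+1}$ are the preimages under $\Phi$ of the roots of $\Phi^n$; grouping them by which root $\beta$ of $\Phi^n$ they map to gives the structure of a $d$-ary tree of depth $n+1$, and $\Gal(\Phi^{n+1}(x)/k(\textbf{s,u}))$ embeds into the wreath product $[S_d]^{n+1} = S_d \wr [S_d]^n = (S_d)^{d^n} \rtimes [S_d]^n$. To show this embedding is surjective it suffices, by a standard wreath-product lemma (which I expect is proved in the wreath-products section), to show that for each root $\beta$ of $\Phi^n$, the extension $L_n(\Phi^{-1}(\beta))/L_n$ has Galois group the full $S_d$, and moreover that these $d^n$ extensions are ``independent'' — i.e. linearly disjoint over $L_n$ — so that the top layer of the tree contributes the full direct product $(S_d)^{d^n}$.

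The key tool for both parts is a specialization/Hilbertianity argument that reduces statements about $k(\textbf{s,u})$ back to the generic situation. The point is that $\Phi$ has enough free parameters: once we are working over $L_n$, the fiber $\Phi^{-1}(\beta)$ is cut out by $\fG(x) - \beta \cdot(\text{denominator})$, and because the $s_i, u_i$ remain algebraically independent over the residue data coming from the tree below, this polynomial again looks generic of degree $d$, giving Galois group $S_d$ over $L_n$. For the linear disjointness of the $d^n$ top-layer extensions, I would use the transitivity of $G_n$ on the roots $\beta$ (which it is, being $[S_d]^n$ acting on the leaves) together with the fact that distinct roots give ``genuinely different'' fiber polynomials — typically one argues that the compositum being smaller than the full product would force a nontrivial relation among the branch loci, contradicting genericity, or one invokes a discriminant/resultant computation showing the relevant fields ramify at different places.

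The main obstacle, I expect, is precisely this last point: proving the linear disjointness of the layer extensions without the Riemann-surface and branch-point machinery that Odoni used in characteristic $0$. In characteristic $0$ one can count branch points and use that the monodromy groups are generated by inertia; in positive characteristic inertia can behave badly (wild ramification), so the argument must be more algebraic. I anticipate the resolution is to work with the generic object — where the $s_i, u_i$ are honest indeterminates — and exploit that the fiber polynomials over the generic leaves involve disjoint sets of the transcendental parameters, or to set up an explicit specialization of the $u_i$ (exploiting the extra parameters the rational function has over a mere polynomial, which is why the rational-function case holds in all characteristics while the polynomial case needs $(d,p) \neq (2,2)$) that makes the branching transparent. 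Once genericity is established, Hilbertianity of $k$ immediately yields the ``most polynomials/rational functions'' corollaries, but that is downstream of this theorem.
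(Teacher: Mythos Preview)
Your outline identifies the correct skeleton (embed in $[S_d]^{n+1}$, induct, and prove the top-layer extensions are linearly disjoint with full $S_d$ Galois groups), but the part you flag as ``the main obstacle'' is indeed the whole content of the theorem, and your proposed resolutions do not work. Arguing that ``the $s_i, u_i$ remain algebraically independent over the residue data'' does not give you linear disjointness of the $d^n$ fiber extensions over $L_n$: all of those extensions are governed by the \emph{same} parameters $s_i,u_i$, so there is no obvious sense in which different fibers ``use disjoint transcendentals,'' and a vague appeal to genericity cannot separate them. Likewise, the hope that ``distinct roots give genuinely different fiber polynomials'' is not a proof; one needs a concrete invariant (a place, a discriminant factor) that distinguishes each $M_i$ from the compositum $\widehat{M_i}$ of the others.

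The paper supplies exactly the two ideas you are missing. First, it does \emph{not} work directly with $\Phi$ or $\fG$; instead it observes that for any $f\in k[x]$ the polynomial $g(x)=f(x+t)-t$ satisfies $g^n(x)=f^n(x+t)-t$, so $\Gal(g^n/k(t))\cong\Gal(f^n(x)-t/k(t))$, and $g$ is a specialization of both $\fG$ and $\Phi$. This reduces everything to finding a single $f\in \bar k[x]$ with $\Gal(f^n(x)-t/\bar k(t))\cong [S_d]^n$. Second, for such a concrete $f$ the linear disjointness is proved by \emph{disjoint ramification}: one chooses $f$ so that some critical point $a$ has the property that $(\varphi(a)-\alpha_i)$ ramifies in $M_i$ but is unramified in $\widehat{M_i}$ (Lemma~\ref{disjoint ramification}); the inertia at that prime is a transposition, forcing $\Gal(M_i/M_i\cap\widehat{M_i})\not\subseteq A_d$, hence $=S_d$. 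The existence of such $f$ is the content of Lemma~\ref{Zariski} and Theorem~\ref{galois group}. Your sketch contains neither the specialization trick nor the ramification mechanism, and without them the inductive step does not close.
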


It can be easily shown that, $\Gal(\fG^n(x)/k(\textbf{s}))$ and $\Gal(\Phi^n(x)/k(\textbf{s,u}))$ must be contained in $[S_d]^n$. The majority of the work in this paper involves showing that these Galois groups contain $[S_d]^n$ as well. 

First, note that by passing to an algebraic closure of $k$ these Galois groups can only decrease in size. So we may replace $k$ with an algebraic closure of $k$ and prove the result in this case. Let $f(x)\in k[x]$ be any polynomial with degree $d$, and let $t$ be transcendental over $k$. If we define $g(x):=f(x+t)-t$, then $g^n(x)=f^n(x+t)-t$ for any $n \in \bN$. So it follows that $\Gal(g^n(x)/k(t))\cong\Gal(f^n(x)-t/k(t))$ for any $n \in \bN$.  Then, since $g^n(x)$ is a specialization of both $\fG^n(x)$ and $\Phi^n(x)$, for any $n$, and Galois groups cannot increase under specializations, it will suffice to show that there exists some $f(x)\in k[x]$ such that $\Gal(f^n(x)-t/k(t))\cong[S_d]^n$. 

In Theorem \ref{sufficient}, we give sufficient conditions on rational functions $\varphi(x)\in k(x)$ to ensure $\Gal(\varphi^n(x)-t/k(t))\cong [S_d]^n$. Then in Theorem \ref{galois group}, we show that in fact ``most'' polynomials in $k[x]$ satisfy these conditions. 

The proof of Theorem \ref{sufficient} is by induction on $n$. The main tool used in the inductive step is ``disjoint ramification'' of primes, that is, we show that in each subextension of the splitting field of $\varphi^n(x)-t$ formed by adjoining $\alpha$ and $\varphi^{-1}(\alpha)$ where $\alpha\in\varphi^{-(n-1)}(t)$, there is a prime that ramifies which ramifies in no other such subextension. The arguments here are similar to the arguments given in \cite{JKMT}. 

We give some preliminary results in Section \ref{preliminaries}. We prove Theorem \ref{generic} in Section \ref{Generic}. In Section \ref{2,2}, we handle the case $\ch k = d= 2$.  In this case it is worth noting that rational functions behave as in all the other cases whereas the results for polynomials are markedly different. The difference follows from the fact that these polynomials are always post-critically finite. 

Finally, in Section \ref{applications} we give two applications using these results along with appropriate versions of the Chebotarev Density Theorem. We first use Theorem \ref{generic} to extend Odoni's application on primes dividing orbits (\cite{odoni}, Lemma 9.1) to global fields in any characteristic, where by global field we mean a number field or a function field of an algebraic curve over a finite field. 

In the second application, we consider factorizations of iterates of polynomials. Let $\pi=(1)^{r_1}\dots(m)^{r_m}$ be a cycle pattern in $S_m$. We say that a squarefree polynomial $f(x)$ of degree $m$ has cycle pattern $\pi$ if $f(x)$ has exactly $r_i$ irreducible factors of degree $i$ for all $1\leq i\leq m$. If $\pi$ is a \textit{cycle pattern} in $S_{d^n}$, let $A(q,b,d,n,\pi)$ be the set of all $f(x)\in \F_q[x]$ such that $\deg f(x)=d$, $f(x)$ has leading coefficient $b$, $f^n(x)$ is squarefree, and $f^n(x)$ has cycle pattern $\pi$.
We show there is an $M=M(d,n)>0$ in $\bR$ and $q_0(d,n)\geq 2$  in $\bN$ such that 
\[|\#A(q,b,d,n,\pi)-q^d\rho(\pi)|\leq Mq^{d-\frac{1}{2}}\]
whenever $(d,\ch \F_q)\neq (2,2)$, $d\geq 2$, $n\geq 1$, $b
\in \F_q^*$,  and $q\geq q_0$. Here $\rho(\pi)$ is the proportion of elements of $[S_d]^n$ with cycle pattern $\pi$.

\vskip2mm
\noindent {\it Acknowledgments.} This paper is dedicated to R.W.K. Odoni for his work on this project and many others that led to the development of a rich subject area. The author would also like to thank Thomas J. Tucker for suggesting this project and for many useful conversations, and Michael Zieve for finding and providing her with the preprint containing R.W.K. Odoni's unpublished work. Thanks are due to S.D. Cohen, with whom R.W.K Odoni shared many useful discussions about this project. This work was partially supported by NSF grant DMS-1200749. 


\section{Preliminaries} \label{preliminaries}

\subsection{Wreath Products}\label{wreath} We first define the wreath product of groups acting on finite sets. For a more detailed description see \cite{Nek}.

\begin{definition} Let $G$ and $H$ be groups acting on the finite sets $\{\alpha_1,\ldots,\alpha_d\}$ and $\{\beta_1,\ldots,\beta_\ell\}$ respectively. The set $\{(\pi,\tau_1,\ldots,\tau_d)|\pi\in G, \tau_1,\ldots,\tau_d \in H\}$ forms a group called the \emph{wreath product of $G$ by $H$}, denoted $G[H]$. $G[H]$ acts on the set $\{\alpha_1,\ldots,\alpha_d\}\times\{\beta_1,\ldots,\beta_\ell\}$ by $(\alpha_i,\beta_r)\mapsto(\alpha_{\pi(i)},\beta_{\tau_i(r)})$. 
\end{definition}

\begin{lemma}[\cite{odoni}, Lemma 4.1] Let $\varphi(x),\psi(x)$ be rational functions with coefficients in a field $K$ with $\deg(\varphi)=d$ and $ \deg(\psi)=\ell$, and $d,\ell \geq 1$, such that $\varphi(\psi(x))$ has $d\ell$ distinct roots in $\bar{K}$. Let $G=\Gal(\varphi(x)/K)$. Then $\Gal(\varphi(\psi(x))/K)$ is isomorphic to a subgroup of $G[S_\ell]$.
\end{lemma}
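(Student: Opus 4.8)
The plan is to realize $\mathrm{Gal}(\varphi(\psi(x))/K)$ concretely as a subgroup of the permutation group of the $d\ell$ roots of $\varphi(\psi(x))$ and then identify that permutation group with $G[S_\ell]$. Write $\beta_1,\dots,\beta_d$ for the $d$ distinct roots of $\varphi(y)$ in $\bar K$ (they are distinct because $\varphi(\psi(x))$ being separable forces $\varphi(y)$ to be separable), and for each $i$ let $R_i=\{\alpha_{i,1},\dots,\alpha_{i,\ell}\}$ be the set of roots of $\psi(x)-\beta_i$ in $\bar K$. Since $\varphi(\psi(x))$ has $d\ell$ distinct roots, the set of all roots of $\varphi(\psi(x))$ is the disjoint union $\bigsqcup_{i=1}^d R_i$, with $|R_i|=\ell$ for every $i$; in particular no two of the fibers $R_i$ overlap. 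This partition of the root set into $d$ blocks of size $\ell$, indexed by the roots $\beta_i$ of $\varphi$, is exactly the combinatorial structure a wreath product acts on.

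Next I would check that the action of $\Gamma:=\mathrm{Gal}(\varphi(\psi(x))/K)$ respects this block structure. Any $\sigma\in\Gamma$ permutes the $d\ell$ roots of $\varphi(\psi(x))$; moreover, applying $\psi$ intertwines this action with the action on the roots of $\varphi$, since $\psi(\sigma(\alpha))=\sigma(\psi(\alpha))$ for $\alpha$ a root of $\varphi(\psi(x))$ (here one uses that $\psi$ has coefficients in $K$, and — a minor point to verify — that $\alpha$ is not a pole of $\psi$, which holds because $\alpha$ is a genuine root of the numerator of $\varphi(\psi(x))$ written in lowest terms). Hence if $\alpha\in R_i$, i.e. $\psi(\alpha)=\beta_i$, then $\psi(\sigma(\alpha))=\sigma(\beta_i)$, so $\sigma$ carries the block $R_i$ into the block $R_{i'}$ where $\beta_{i'}=\sigma(\beta_i)$. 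Thus $\sigma$ permutes the $d$ blocks according to an element $\pi_\sigma\in\mathrm{Sym}(\{\beta_1,\dots,\beta_d\})$, and this element lies in $G=\mathrm{Gal}(\varphi(x)/K)$ because it is the restriction of $\sigma$ to the splitting field of $\varphi$, which is a subfield of the splitting field of $\varphi(\psi(x))$. Fixing, for each $i$, a bijection $R_i\cong\{\beta_1,\dots,\beta_\ell\}$, the restriction of $\sigma$ to the block $R_i$ composed with the appropriate identifications gives an element $\tau_i\in S_\ell$. Assembling these data gives a map $\sigma\mapsto(\pi_\sigma,\tau_1,\dots,\tau_d)\in G[S_\ell]$.

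Finally I would verify that this map is an injective group homomorphism. Injectivity is immediate: if $\sigma$ induces the trivial element of $G[S_\ell]$ then $\sigma$ fixes every root of $\varphi(\psi(x))$, hence fixes the splitting field pointwise, so $\sigma=1$. For the homomorphism property one composes two automorphisms and tracks what happens to a root $\alpha\in R_i$: the block-permutation parts compose in $G$, and the fiber parts compose with the expected "twist" by the block permutation — i.e. the $i$-th coordinate of the product is $\tau_i^{(2)}\circ\tau_{\pi^{(2)}{}^{-1}(i)}^{(1)}$ or its mirror image, which is precisely the multiplication rule in the wreath product $G[S_\ell]$ as defined above. This is a routine but slightly fiddly bookkeeping computation, and it is the one place where one must be careful about the ordering conventions in the definition of $G[H]$; I regard it as the main (modest) obstacle. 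Once it checks out, the image of $\Gamma$ is a subgroup of $G[S_\ell]$ isomorphic to $\Gamma$, which is the assertion of the lemma. $\square$
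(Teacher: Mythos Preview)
Your proposal is correct and follows essentially the same approach as the paper's proof: partition the $d\ell$ roots into $d$ blocks indexed by the roots of $\varphi$, use $\psi(\sigma(\alpha))=\sigma(\psi(\alpha))$ to show the Galois action respects this block structure with block permutation landing in $G$, and then assemble the data into an injective homomorphism $\sigma\mapsto(\pi_\sigma,\tau_1,\dots,\tau_d)$ into $G[S_\ell]$. The paper's version is slightly terser (it omits your remarks about poles and the detailed verification of the wreath-product multiplication, simply declaring the map ``easily shown to be an injective homomorphism''), but the argument is the same.
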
 

\begin{proof} Let $\{\alpha_1,\ldots,\alpha_d\}$ be the roots of $\varphi(x)$ then the roots of $\varphi(\psi(x))$ are the roots of $(\psi(x)-\alpha_i)$ for $i=1,\ldots, d$. So we can write the set of roots of $\varphi(\psi(x))$ as $\{\beta_{i,r}| i=1,\ldots,d, r=1,\ldots,\ell\}$ where $\{\beta_{i,r}|r=1,\ldots,\ell\}$ is the set of zeros of $\psi(x)-\alpha_i$. Let $\sigma \in \Gal(\varphi(\psi(x))/K)$.  Let $F$ be the splitting field of $\varphi(x)$ over $K$, then $\sigma$ induces a permutation $\pi:=\sigma|_F$ on $\{\alpha_1,\ldots,\alpha_d\}$, that is, $\pi \in G$. We can think of $\pi$ as a permutation on the indicies $\{1,\dots,d\}$ defined by $\alpha_{\pi(i)}:=\pi(\alpha_i)$. Now fix $i$, and note that since $\psi(\sigma(\beta_{i,r}))=\sigma(\psi(\beta_{i,r}))=\sigma(\alpha_i)=\pi(\alpha_i)=\alpha_{\pi(i)}$, we must have $\sigma(\beta_{i,r})=\beta_{\pi(i),s}$ for some $s$. This defines a map $r\mapsto s$ which is a permutation of $\{1,\ldots,\ell\}$, we call this map $\tau_i$. Hence, the map $\sigma$ is given by $\sigma(\beta_{i,r})=\beta_{\pi(i),\tau_{i}(r)}$ for $\pi\in G$ and $\tau_i\in S_\ell$. Thus, we can define a map $\Gal(\varphi(\psi(x))/K) \lra G[S_\ell]$ by $\sigma\mapsto (\pi,\tau_1,\ldots,\tau_d)$ which is easily shown to be an injective homomorphism.
\end{proof}

We define the \textit{$n$-th wreath power of a group $G$} recursively by $[G]^1=G$ and $[G]^n=[G]^{n-1}[G]$.

\begin{corollary} \label{iterated wreath} If $\varphi(x)$ is a rational function in $K(x)$ with degree $d$ and $\alpha\in K$ such that $\varphi^n(x)-\alpha$ has $d^n$ distinct zeros in $\bar{K}$, then $\Gal(\varphi^n(x)-\alpha/K)$ can be embedded in $[S_d]^n$. 
\end{corollary}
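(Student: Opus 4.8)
The plan is to prove Corollary \ref{iterated wreath} by induction on $n$, applying the preceding Lemma repeatedly with $\psi$ equal to an iterate of $\varphi$. For the base case $n=1$, the statement $\Gal(\varphi(x)-\alpha/K)\hookrightarrow S_d$ is immediate: $\varphi(x)-\alpha$ has $d^1=d$ distinct roots in $\bar K$ by hypothesis, so the Galois group acts faithfully on these $d$ roots and hence embeds in $S_d=[S_d]^1$.

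For the inductive step, suppose the result holds for $n-1$, and consider $\varphi^n(x)-\alpha$ with $d^n$ distinct roots in $\bar K$. The key observation is that $\varphi^n(x)-\alpha = (\varphi^{n-1}(x)-\alpha)\circ\varphi$ in the sense of the Lemma — more precisely, taking the notation of the Lemma with the ``outer'' function being $\varphi^{n-1}(y)-\alpha$ (a rational function of degree $d^{n-1}$, call it $y$) and the ``inner'' function being $\psi(x)=\varphi(x)$ (degree $d$), the composite is exactly $\varphi^{n-1}(\varphi(x))-\alpha = \varphi^n(x)-\alpha$. Since this composite has $d^{n-1}\cdot d = d^n$ distinct roots by hypothesis, the Lemma applies and gives an embedding
\[
\Gal(\varphi^n(x)-\alpha/K)\hookrightarrow G[S_d],\qquad G:=\Gal(\varphi^{n-1}(x)-\alpha/K).
\]
Here I must also check the hypotheses needed to invoke the induction hypothesis on $G$: namely that $\varphi^{n-1}(x)-\alpha$ has $d^{n-1}$ distinct roots in $\bar K$. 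This follows because if $\varphi^{n-1}(x)-\alpha$ had a repeated root $\beta$, then every root of $\varphi(x)=\beta$ (of which there are $d^{n-1}$-many fibers' worth) would be a repeated root of $\varphi^n(x)-\alpha$, contradicting separability of $\varphi^n(x)-\alpha$; alternatively one notes the roots of $\varphi^{n-1}(x)-\alpha$ are the $\varphi$-images of the roots of $\varphi^n(x)-\alpha$ and a collision among them would force a collision upstairs. By the induction hypothesis, $G\hookrightarrow [S_d]^{n-1}$, and since the wreath-product construction $(-)[S_d]$ is monotone with respect to inclusions of the acting group (a subgroup $G\le H$ acting on the same set induces $G[S_d]\le H[S_d]$ as subgroups acting on the same product set), we get
\[
\Gal(\varphi^n(x)-\alpha/K)\hookrightarrow G[S_d]\hookrightarrow [S_d]^{n-1}[S_d] = [S_d]^n,
\]
which is the desired conclusion.

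The only genuinely delicate point is the bookkeeping of the distinct-roots hypotheses: the Lemma requires the composite $\varphi\circ\psi$ to have $d\ell$ distinct roots, and to run the induction we need the $(n-1)$-st iterate equation to also be separable. Both reduce to the elementary fact that separability of $\varphi^n(x)-\alpha$ propagates downward through the tower of fibers, which I would record in a single sentence rather than belabor. Everything else — the recursive definition $[S_d]^n=[S_d]^{n-1}[S_d]$, the functoriality of wreath products in the first argument, the identity $\varphi^{n-1}\circ\varphi=\varphi^n$ — is formal, so there is no real obstacle; the corollary is essentially just ``iterate the Lemma $n$ times.''
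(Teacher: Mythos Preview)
Your proof is correct and follows exactly the approach the paper intends: the corollary is stated without proof precisely because it is the evident induction on $n$ using the preceding Lemma with outer function $\varphi^{n-1}(x)-\alpha$ and inner function $\varphi(x)$, together with the monotonicity $G\le[S_d]^{n-1}\Rightarrow G[S_d]\le[S_d]^{n-1}[S_d]$. Your verification that $\varphi^{n-1}(x)-\alpha$ inherits $d^{n-1}$ distinct roots from the hypothesis on $\varphi^n(x)-\alpha$ is the only point the paper leaves entirely to the reader, and the counting argument you sketch (each $\varphi$-fiber has at most $d$ points, so fewer than $d^{n-1}$ roots downstairs would give fewer than $d^n$ roots upstairs) is the clean way to handle it.
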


The group  $[S_d]^n$ has a nice interpretation as the automorphism group of the $d$-ary rooted tree up to the $n$-th level \cite{Nek}, \cite{BJ}. Suppose $\varphi^n(x)-\alpha$ has $d^n$ distinct roots for each $n\geq 1$. Since the image of any root of $\varphi^n(x)-\alpha$ under $\varphi$ is a root of $\varphi^{n-1}(x)-\alpha$, we can define a tree structure on the roots of $\varphi^n(x)-\alpha$ as follows. If $\beta\in\varphi^{-n}(\alpha)$ then $\beta$ lies in the $n$-th level of the tree. If $\varphi(\beta)=\gamma$, then $\beta$ lies above $\gamma$ in the tree, that is, there is a branch connecting $\beta$ to $\gamma$. The diagram is shown in Figure \ref{tree}.

	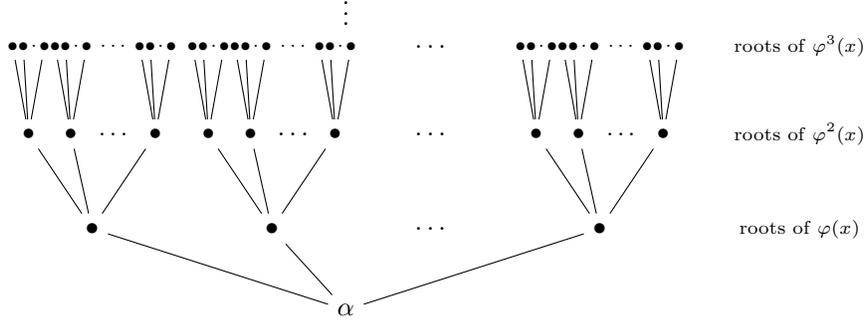
\begin{figure}
		\begin{tikzpicture}[scale=.9]
		
		\path 
		(5,1)node(0){$\alpha$}
		(10/8,2.2)node(1){$\bullet$}
		(30/8+20/128,2.2)node(2){$\bullet$}
		(50/8,2.2)node(m){$\ldots$}
		(70/8,2.2)node(3){$\bullet$}
		(11.7,2.2)node{\scriptsize{roots of $\varphi(x)$}};
		
		\draw(1)to(0);
		\draw(2)to(0);
		\draw(3)to(0);
		
		\path(10/32,3.6)node(11){\small$\bullet$}
		(30/32,3.6)node(12){\small$\bullet$}
		(50/32,3.6)node(1m){\small$\ldots$}
		(70/32,3.6)node(13){\small$\bullet$}
		
		(90/32+20/128,3.6)node(21){\small$\bullet$}
		(110/32+20/128,3.6)node(22){\small$\bullet$}
		(130/32+20/128,3.6)node(2m){$\ldots$}
		(150/32+20/128,3.6)node(23){\small$\bullet$}
		
		(200/32,3.6)node(mm){$\ldots$}
		
		(250/32,3.6)node(31){\small$\bullet$}
		(270/32,3.6)node(32){\small$\bullet$}
		(290/32,3.6)node(3m){\small$\ldots$}
		(310/32,3.6)node(33){\small$\bullet$}
		(11.7,3.6)node{\scriptsize{roots of $\varphi^2(x)$}};
		
		\draw(11)to(1)to(12);
		\draw(13)to(1);
		\draw(21)to(2)to(22);
		\draw(23)to(2);
		\draw(31)to(3)to(32);
		\draw(33)to(3);
		
		\path(10/128,4.9)node(111){\scriptsize$\bullet$}
		(30/128,4.9)node(112){\scriptsize$\bullet$}
		(50/128,4.9)node(11m){\scriptsize$\ldots$}
		(70/128,4.9)node(113){\scriptsize$\bullet$}
		
		(90/128,4.9)node(121){\scriptsize$\bullet$}
		(110/128,4.9)node(122){\scriptsize$\bullet$}
		(130/128,4.9)node(12m){\scriptsize$\ldots$}
		(150/128,4.9)node(123){\scriptsize$\bullet$}
		
		(200/128,4.9)node(1mm){\scriptsize$\ldots$}
		
		(250/128,4.9)node(131){\scriptsize$\bullet$}
		(270/128,4.9)node(132){\scriptsize$\bullet$}
		(290/128,4.9)node(13m){\scriptsize$\ldots$}
		(310/128,4.9)node(133){\scriptsize$\bullet$}

		(330/128+20/128,4.9)node(211){\scriptsize$\bullet$}
		(350/128+20/128,4.9)node(212){\scriptsize$\bullet$}
		(370/128+20/128,4.9)node(21m){\scriptsize$\ldots$}
		(390/128+20/128,4.9)node(213){\scriptsize$\bullet$}
		
		(410/128+20/128,4.9)node(221){\scriptsize$\bullet$}
		(430/128+20/128,4.9)node(222){\scriptsize$\bullet$}
		(450/128+20/128,4.9)node(22m){\scriptsize$\ldots$}
		(470/128+20/128,4.9)node(223){\scriptsize$\bullet$}
		
		(520/128+20/128,4.9)node(2mm){\scriptsize$\ldots$}
		
		(570/128+20/128,4.9)node(231){\scriptsize$\bullet$}
		(590/128+20/128,4.9)node(232){\scriptsize$\bullet$}
		(610/128+20/128,4.9)node(23m){\scriptsize$\ldots$}
		(630/128+20/128,4.9)node(233){\scriptsize$\bullet$}   (11.7,4.9)node{\scriptsize{roots of $\varphi^3(x)$}}
		
		(5,5.5)node{\vdots}
		
		(200/32,4.9)node(mmm){$\ldots$}

		(970/128,4.9)node(311){\scriptsize$\bullet$}
		(990/128,4.9)node(312){\scriptsize$\bullet$}
		(1010/128,4.9)node(31m){\scriptsize$\ldots$}
		(1030/128,4.9)node(313){\scriptsize$\bullet$}
		
		(1050/128,4.9)node(321){\scriptsize$\bullet$}
		(1070/128,4.9)node(322){\scriptsize$\bullet$}
		(1090/128,4.9)node(32m){\scriptsize$\ldots$}
		(1110/128,4.9)node(323){\scriptsize$\bullet$}
		
		(1160/128,4.9)node(3mm){\scriptsize$\ldots$}
		
		(1210/128,4.9)node(331){\scriptsize$\bullet$}
		(1230/128,4.9)node(332){\scriptsize$\bullet$}
		(1250/128,4.9)node(33m){\scriptsize$\ldots$}
		(1270/128,4.9)node(333){\scriptsize$\bullet$};
		
		\draw(111)to(11);
		\draw(112)to(11)to(113);
		\draw(121)to(12);
		\draw(122)to(12)to(123);
		\draw(131)to(13);
		\draw(132)to(13)to(133);
		\draw(211)to(21);
		\draw(212)to(21)to(213);
		\draw(221)to(22);
		\draw(222)to(22)to(223);
		\draw(231)to(23);
		\draw(232)to(23)to(233);
		\draw(311)to(31);
		\draw(313)to(31);
		\draw(312)to(31);
		\draw(321)to(32);
		\draw(322)to(32)to(323);
		\draw(331)to(33);
		\draw(332)to(33)to(333);
		\end{tikzpicture}
		\caption{Tree diagram for the roots of $\varphi^n(x)$.}
		\label{tree} 
	\end{figure}

The group $\Gal(\varphi^n(x)-\alpha/K)$ acts on the tree up to the $n$-th level by permuting the branches so $\Gal(\varphi^n(x)-\alpha/K)$ is isomorphic to a subgroup of $\Aut(T_{d,n})$, the automorphism group of the tree up to the $n$-th level.

\subsection{Discriminants and Ramification}

Let $M/K$ be a finite Galois extension. If $\fp$ is a prime of $K$ and $\fq$ is any prime of $M$ extending $\fp$, we define $e(\fq|\fp)$ to be the inertia degree of $\fq$ over $\fp$ and $f(\fq|\fp)$ to be the residue degree of $\fq$ over $\fp$. The next result is useful in determining the structure of inertia groups, similar results can be found in \cite{GTZ}, \cite{vanderWaerden}.

\begin{lemma} \label{inertia group}
	Let $M/K$ be a finite Galois extension with Galois group $G$. Let $H$ be a subgroup of $G$ and $L = M^H$ be the corresponding intermediate field. Let $\fq$ be a prime of $M$ and $\fp := \fq \cap K$. Let $X$ be the transitive $G$-set $G/H$. Then there is a bijection between the set of orbits of $X$ under the action of $D(\fq|\fp)$, the decomposition group of $\fq$ over $\fp$, and the set of extensions $\fP$ of $\fp$ to $L$ with the property: If $\fP$ corresponds to $Y$ then the length of $Y$ is $e(\fP|\fp)f(\fP|\fp)$ and $Y$ is the disjoint union of $f(\fP|\fp)$ orbits of length $e(\fP|\fp)$ under the action of $I(\fq|\fp)$, the inertia group of $\fq$ over $\fp$.  
\end{lemma}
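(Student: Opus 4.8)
The plan is to reduce the statement to the standard theory of decomposition and inertia groups via the correspondence between primes of $L$ above $\fp$ and orbits of a suitable group on $G/H$. First I would recall the classical bijection: the primes $\fP$ of $L = M^H$ lying over $\fp$ are in bijection with the double cosets $H \backslash G / D(\fq|\fp)$, or equivalently with the orbits of $D(\fq|\fp)$ acting on the coset space $G/H$ on the right (after fixing $\fq$, so that all primes of $M$ over $\fp$ are of the form $\sigma \fq$). Concretely, given $\sigma \in G$, the prime $\sigma\fq \cap L$ depends only on the orbit $D(\fq|\fp)\sigma H$, and this assignment is a bijection onto the set of extensions of $\fp$ to $L$. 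Thus an orbit $Y \subseteq G/H$ under $D := D(\fq|\fp)$ corresponds to a prime $\fP_Y$ of $L$.

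Next I would compute $|Y|$ and relate it to the ramification and residue data of $\fP_Y$. If $Y = D\sigma H / H$ for a chosen representative $\sigma$, then the stabilizer in $D$ of the coset $\sigma H$ is $D \cap \sigma H \sigma^{-1}$, which is exactly the decomposition group $D(\sigma\fq \,|\, \fP_Y)$ of the prime $\sigma \fq$ of $M$ over the prime $\fP_Y$ of $L$ (here I use that conjugating $\fq$ by $\sigma$ conjugates its decomposition group, and intersecting with $\mathrm{Gal}(M/L) = H$ — up to the relabeling via $\sigma$ — gives the decomposition group relative to $L$). Hence the orbit-stabilizer theorem gives
\[
|Y| = [D : D \cap \sigma H \sigma^{-1}] = \frac{|D(\fq|\fp)|}{|D(\sigma\fq\,|\,\fP_Y)|} = \frac{e(\fq|\fp)f(\fq|\fp)}{e(\sigma\fq\,|\,\fP_Y)f(\sigma\fq\,|\,\fP_Y)}.
\]
By multiplicativity of ramification and residue degrees in the tower $K \subseteq L \subseteq M$, and the fact that $M/K$ is Galois (so $e(\fq|\fp) = e(\sigma\fq|\fp)$ and similarly for $f$), the right-hand side equals $e(\fP_Y|\fp)f(\fP_Y|\fp)$. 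This establishes that the length of $Y$ is $e(\fP_Y|\fp)f(\fP_Y|\fp)$.

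For the finer statement, I would restrict the action to the inertia subgroup $I := I(\fq|\fp) \trianglelefteq D$. Since $I$ is normal in $D$, the $D$-orbit $Y$ breaks up into $I$-orbits that are all permuted transitively by the quotient $D/I$, hence all have the same size; that common size is $[I : I \cap \sigma H\sigma^{-1}]$, which by the same orbit-stabilizer computation applied to $I$ in place of $D$ equals $e(\sigma\fq\,|\,\fP_Y) = e(\fP_Y|\fp)$ (using that the inertia group relative to $L$ is $I \cap \sigma H \sigma^{-1}$ up to relabeling). The number of such $I$-orbits is then $|Y| / e(\fP_Y|\fp) = f(\fP_Y|\fp)$, as claimed. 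I expect the main obstacle to be purely bookkeeping: keeping the conjugation by $\sigma$ consistent so that "the decomposition (resp. inertia) group of $\sigma\fq$ over $\fP_Y$, viewed inside $H$ after relabeling" is correctly identified with $D \cap \sigma H \sigma^{-1}$ (resp. $I \cap \sigma H \sigma^{-1}$); once that identification is pinned down, everything follows from orbit-stabilizer and the multiplicativity of $e$ and $f$ in towers. A secondary point to state carefully is that $Y$ being a disjoint union of equal-size $I$-orbits uses the normality of $I$ in $D$, not just in $G$.
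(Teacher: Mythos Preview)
Your proposal is correct and follows essentially the same route as the paper: set up the bijection between $D(\fq|\fp)$-orbits on $G/H$ and primes of $L$ over $\fp$, identify the stabilizer of a coset with the decomposition (resp.\ inertia) group of the corresponding prime of $M$ over the associated prime of $L$, and read off the orbit lengths via orbit--stabilizer and multiplicativity of $e,f$ in the tower $K\subseteq L\subseteq M$. The only minor difference is that you invoke normality of $I$ in $D$ to conclude that all $I$-orbits inside a given $D$-orbit $Y$ have equal size, whereas the paper instead observes that the orbit-size computation $\#Z=e(\fP|\fp)$ applies uniformly to every coset in $Y$ (since all such cosets determine the same $\fP$, as shown in the well-definedness step); both arguments are valid and amount to the same thing.
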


\begin{proof}  For $\tau \in G$ we will show that the length of the orbit of the coset $H\tau$ under the action of $D(\fq|\fp)$ is $e(\fP|\fp)f(\fP|\fp)$, where $\fP=\tau(\fq) \cap L$. Let $Y$ be the orbit of $H\tau$ and $\Stab_{D(\fq|\fp)} (H\tau)$ be the stabilizer of $H\tau$ under the action of $D(\fq|\fp)$. Then,
	
	\begin{align*}
	\Stab_{D(\fq|\fp)} (H\tau) & = \{ \gamma \in D(\fq|\fp) | H\tau\gamma = H\tau\} 
	=\{\gamma \in D(\fq|\fp)|\tau\gamma\tau^{-1} \in H\} \\
	& =H\cap \tau D(\fq|\fp) \tau^{-1} 
	= H \cap D(\tau(\fq)|\fp) 
	= D(\tau(\fq)|\fP),
	\end{align*}

	\noindent where $D(\tau(\fq)|\fP)$ is the decomposition group of $\tau(\fq)$ over $\fP$. So, the Orbit/Stabilizer Theorem implies 
	
	\begin{align*}
	\# Y & =\frac{\#D(\fq|\fp)}{\#\Stab_{D(\fq|\fp)} (H\tau)}
	= \frac{\#D(\fq|\fp)}{\#D(\tau(\fq)|\fP)} 
	= \frac{\#D(\fq|\fp)}{\#D(\fq|\fP)}
	= e(\fP|\fp)f(\fP|\fp). 
	\end{align*}

	Now, we must show that this correspondence is well-defined and bijective. Suppose $H\tau$ and $H\sigma$ are in the same orbit under the action of $D(\fq|\fp)$, then $\exists \gamma \in D(\fq|\fp)$ such that $H\tau\gamma=H\sigma$ which implies $ \tau\gamma\sigma^{-1} \in H$. So $\sigma(\fq)\cap L= \tau\gamma\sigma^{-1}(\sigma(\fq)\cap L)=\tau(\fq)\cap L$ and the map $Y \mapsto \tau(q)\cap L$ is well-defined. Clearly the map is surjective, since $G$ permutes the primes of $M$ lying above $\fp$ transitively. To see that this map is one-to-one suppose $\tau(\fq)\cap L=\sigma(\fq)\cap L= \fP$. Then $\tau(\fq), \sigma(\fq)$ both lie above $\fP$, and since $H$ acts transitively on the primes of $M$ lying above $\fP$, $\exists \gamma \in H$ such that $\gamma\tau(\fq)=\sigma(\fq)$. Then, $\sigma^{-1}\gamma\tau(\fq)=\fq$ so $\sigma^{-1}\gamma\tau \in D(\fq|\fp)$. Since $H\sigma(\sigma^{-1}\gamma\tau) = H\tau$, this shows that $H\sigma$ and $H\tau$ are in the same orbit under the action of $D(\fq|\fp)$.
	
	It remains to show that $Y$ is the disjoint union of $f(\fP|\fp)$ orbits of length $e(\fP|\fp)$ under the action of $I(\fq|\fp)$. Let $Z$ be the orbit of $H\tau$ under $I(\fq|\fp)$, it suffices to show that $\#Z$ is $e(\fP|\fp)$. Let $\Stab_{I(\fq|\fp)} (H\tau)$ be the stabilizer of $H\tau$ under the action of $I(\fq|\fp)$. Then, arguing as before we see,
	
	\[
	\Stab_{I(\fq|\fp)} (H\tau) = \{\gamma \in I(\fq|\fp)|H\tau\gamma=H\tau\} \\
	= I(\tau(\fq)|\fP) 
	\]

	\noindent where $I(\tau(\fq)|\fP)$ is the inertia group of $\tau(\fq)$ over $\fP$. Using the Orbit/Stabilizer Theorem again,

	\[
	\# Z = \frac{\#I(\fq|\fp)}{\#\Stab_{I(\fq|\fp)} (H\tau)}\\
	=e(\fP|\fp).  
	\]
\end{proof}

\begin{remark} The set $G/H$ is the set of $K$ homomorphisms  of $L$ into $M$. In the case $L \cong K(\theta)$ where $\theta$ is a root of some $f \in K[x]$, this corresponds to the set of zeros of $f$ in $M$, so there is a one-to-one correspondence between the set of orbits of the roots of $f(x)$ under the action of the decomposition group $D(\fq|\fp)$ and the set of extension of $\fp$ to $L$ with the property from Lemma \ref{inertia group}. 
\end{remark}

Let $A$ be a Dedekind domain, $K$  the field of fractions of $A$,  $L$  a separable extension of
$K$, and $B$ the integral closure of $A$ in $L$. It is a
standard result that any prime of $A$ that ramifies in $B$ must contain $\Delta(B/A)$, the 
discriminant ideal of the extension $B/A$.  The following two results on discriminants are standard, see  \cite{Jan} or \cite{Lang}, for example.

\begin{lemma}\label{ramification and discriminant}  Let $\fp \subseteq A$ be a prime, $\fp B=\prod{\fq_i^{e_i}}$, and $f_i=f(\fq_i|\fp)$ the residue degree, then the power of $\fp$ in $\Delta(B/A)$ is greater than or equal to $\sum{(e_i-1)f_i}$ with equality if and only if $\ch K$ does not divide $e_i$ for any $i$. 
\end{lemma}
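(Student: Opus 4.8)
The plan is to deduce the statement from the classical theory of the different, reducing it immediately to a purely local computation. First I would localize at $\fp$: replacing $A$ by $A_\fp$ and $B$ by $B\otimes_A A_\fp$ changes neither $v_\fp(\Delta(B/A))$ nor any of the $e_i$ or $f_i$, so we may assume $A$ is a discrete valuation ring with maximal ideal $\fp$, and then $B$ is a free $A$-module of rank $n=[L:K]$. I would then invoke the two standard facts (see, e.g., \cite{Lang}) that $\Delta(B/A)=\Norm_{L/K}(\mathfrak{d}_{B/A})$, where $\mathfrak{d}_{B/A}$ is the different of $B/A$, and that $\Norm_{L/K}(\fq_i)=\fp^{f_i}$. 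Setting $d_i:=v_{\fq_i}(\mathfrak{d}_{B/A})$ and using multiplicativity of the ideal norm, this gives $v_\fp(\Delta(B/A))=\sum_i f_i d_i$, so the lemma reduces to proving, for each $i$ separately, that $d_i\ge e_i-1$ with equality if and only if $\ch K\nmid e_i$.

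Since the different is compatible with completion, I would next complete at $\fq_i$ and so assume that $A$ is a complete discrete valuation ring, that $L/K$ is a finite separable field extension with $\fp B=\fq^{e}$ and residue degree $f$, and that $\mathfrak{d}_{B/A}=\fq^{d}$; here the residue extension is separable and its characteristic is $\ch K$, which holds in all of our applications since the residue fields there are perfect. Let $L_0$ be the maximal unramified subextension of $L/K$, with valuation ring $B_0$; then $L_0/K$ is unramified and $L/L_0$ is totally ramified of degree $e$, so by transitivity of the different together with the triviality of the different of an unramified extension we get $d=v_\fq(\mathfrak{d}_{B/B_0})$. Now $B=B_0[\pi]$ for a uniformizer $\pi$ of $L$, and its minimal polynomial $g(x)=x^{e}+a_{e-1}x^{e-1}+\dots+a_0$ over $B_0$ is Eisenstein, so $\mathfrak{d}_{B/B_0}=(g'(\pi))$ and everything comes down to computing $v_\fq(g'(\pi))$.

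This last valuation count is short: since $v_\fq(\pi)=1$ and $v_\fq(a_j)\ge e$ for every $j<e$ by the Eisenstein condition, each term $j a_j\pi^{j-1}$ with $1\le j\le e-1$ has $\fq$-valuation at least $e$, whereas the leading term $e\pi^{e-1}$ has valuation $v_\fq(e)+(e-1)$. If $\ch K\nmid e$ then $e$ is a unit in $A$, so $v_\fq(e)=0$, this term has valuation exactly $e-1$, strictly smaller than every other term, and hence $d=e-1$. If instead $\ch K\mid e$ then $\ch K=p>0$ forces $e=0$ in $A$, the leading term vanishes, and every surviving term has valuation at least $e$, so $d\ge e$. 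Feeding this tame/wild dichotomy back into $v_\fp(\Delta(B/A))=\sum_i f_i d_i$ yields both $v_\fp(\Delta(B/A))\ge\sum_i(e_i-1)f_i$ and the equality criterion. The only real obstacle is the bookkeeping in the first two paragraphs — making the reduction to a complete, totally ramified, monogenic extension rigorous, which is exactly where separability of the residue extensions is needed; once there, the Eisenstein computation is immediate.
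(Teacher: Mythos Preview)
The paper does not actually prove this lemma: it is stated as a standard result with a pointer to \cite{Jan} and \cite{Lang}, so there is no ``paper's proof'' to compare against. Your argument is precisely the standard textbook proof --- localize, use $\Delta(B/A)=\Norm_{L/K}(\mathfrak{d}_{B/A})$, complete, strip off the maximal unramified subextension, and compute $v_\fq(g'(\pi))$ for an Eisenstein generator --- and it is correct.

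One small remark on your caveat. The implication ``$\ch K\nmid e\Rightarrow e\in A^\times$'' that you use is exactly where the lemma, as literally stated, can fail: it is the \emph{residue} characteristic, not $\ch K$, that governs tameness, and the two differ in mixed characteristic (e.g.\ $A=\bZ_p$). You flag this, but your justification ``since the residue fields there are perfect'' only explains why the residue extension is separable; it does not force the residue characteristic to equal $\ch K$. The reason the two agree in the paper's applications is rather that $A$ is always an algebra over a field $k$ (typically $A=k[t]$ or a localization thereof), so $A$ contains the prime field of $K$ and every nonzero integer prime to $\ch K$ is already a unit. With that clarification your reduction goes through and the Eisenstein computation finishes the job.
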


For computational purposes it is often easier to work with polynomial discriminants which we will do here.

\begin{lemma}\label{discriminant of extension}  Let $P(x)$ be an irreducible polynomial in $A[x]$, let $\theta$ be a root of $P(x)$, and let $L=K(\theta)$, if $B=A[\theta]$, that is, if $A[\theta]$ integrally closed in $L$, then $\Delta(B/A)=(\Delta(P(x)))$, where $\Delta(P(x))$ is the usual polynomial discriminant of $P(x)$ and $(\Delta(P(x)))$ is the ideal generated by $\Delta(P(x))$.
\end{lemma}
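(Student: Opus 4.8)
The plan is to reduce the statement to a Vandermonde computation. Recall first that the discriminant ideal $\Delta(B/A)$ is by definition the $A$-ideal generated by the elements $d(\omega_1,\dots,\omega_n) := \det\bigl(\tr_{L/K}(\omega_i\omega_j)\bigr)_{1\le i,j\le n}$, where $(\omega_1,\dots,\omega_n)$ ranges over all $K$-bases of $L$ lying in $B$ and $n=[L:K]$. The first step is to observe that the hypothesis $B=A[\theta]$ means precisely that $B$ is a free $A$-module with basis $1,\theta,\dots,\theta^{n-1}$: since $P$ is irreducible of degree $n$ these powers form a $K$-basis of $L$, and they $A$-span $A[\theta]=B$. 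Then I would invoke the standard change-of-basis identity: if $(\omega_i)$ and $(\omega_i')$ are $K$-bases of $L$ inside $B$ with $\omega_i'=\sum_j c_{ij}\omega_j$, then $d(\omega_1',\dots,\omega_n')=\det(c_{ij})^2\,d(\omega_1,\dots,\omega_n)$; and when both are $A$-bases of the \emph{same} free module the matrix $(c_{ij})$ lies in $GL_n(A)$, so $\det(c_{ij})$ is a unit of $A$. It follows that $\Delta(B/A)=\bigl(d(1,\theta,\dots,\theta^{n-1})\bigr)$.

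Next I would compute $d(1,\theta,\dots,\theta^{n-1})=\det\bigl(\tr_{L/K}(\theta^{i+j})\bigr)_{0\le i,j\le n-1}$. Since $L/K$ is separable, letting $\theta_1,\dots,\theta_n$ denote the roots of $P$ in an algebraic closure we have $\tr_{L/K}(\theta^m)=\sum_{k=1}^n\theta_k^m$ for every $m$. Hence the trace matrix $\bigl(\tr_{L/K}(\theta^{i+j})\bigr)$ equals $VV^{\mathsf T}$, where $V=(\theta_k^{\,i})_{0\le i\le n-1,\ 1\le k\le n}$ is a Vandermonde matrix, and therefore $d(1,\theta,\dots,\theta^{n-1})=(\det V)^2=\prod_{i<j}(\theta_i-\theta_j)^2$. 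Finally, $\theta$ lies in $B$ and so is integral over the integrally closed domain $A$, which forces its minimal polynomial $P$ to be monic; for a monic polynomial the quantity $\prod_{i<j}(\theta_i-\theta_j)^2$ is exactly the usual discriminant $\Delta(P(x))$ (equivalently $(-1)^{n(n-1)/2}\res(P,P')$). Combining the two paragraphs gives $\Delta(B/A)=(\Delta(P(x)))$.

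The only real content is the bookkeeping in the first paragraph: the hypothesis $B=A[\theta]$ is what upgrades $\{1,\theta,\dots,\theta^{n-1}\}$ from a mere $K$-basis of $L$ to an $A$-basis of $B$, so that the transition matrix to any competing basis is invertible over $A$ and the discriminant ideal is principal, generated by $d(1,\theta,\dots,\theta^{n-1})$. Everything after that is the classical identification of the trace-form determinant with the polynomial discriminant via a direct Vandermonde factorization; I expect no obstacle there beyond recalling the sign in $\Delta(P)=(-1)^{n(n-1)/2}\res(P,P')$. (If one prefers not to assume $P$ monic, the same computation simply carries the leading coefficient along and recovers whichever normalization is adopted for $\Delta(P(x))$.)
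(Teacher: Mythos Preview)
Your proof is correct and is the standard textbook argument. The paper does not actually prove this lemma; it is stated as a standard fact with references to Janusz and Lang, so there is nothing to compare against.

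One minor remark on the bookkeeping in your first paragraph: to conclude that $\Delta(B/A)=\bigl(d(1,\theta,\dots,\theta^{n-1})\bigr)$ from the definition you give (the ideal generated by $d(\omega_1,\dots,\omega_n)$ as $(\omega_i)$ ranges over all $K$-bases of $L$ contained in $B$), you do not need the transition matrix to lie in $GL_n(A)$. You only need $c_{ij}\in A$, which follows immediately from $\omega_i'\in B=A[\theta]$; then $d(\omega')=\det(c_{ij})^2\,d(1,\theta,\dots,\theta^{n-1})$ already lies in the principal ideal $\bigl(d(1,\theta,\dots,\theta^{n-1})\bigr)$. The $GL_n(A)$ observation is the stronger statement that all $A$-bases give the \emph{same} discriminant up to units, which is true but more than you need here. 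This is a cosmetic point; the argument goes through either way.
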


Thus, if $B=A[\theta]$ then the only primes of $A$ ramifying in $B$ must divide $\Delta(P(x))$, and furthermore, if $\fp$ ramifies in $B$ then $v_{\fp}(\Delta(P(x))=v_{\fp}(\Delta(B/A))$. In the next two corollaries we assume this is the case and let $M$ be the splitting field of $P(x)$ over $K$.

\begin{corollary}\label{singletransposition}  If $\fp||\Delta(P(x))$  in $A$, then for any prime $\fq$ of $M$ lying over $\fp$, the action of the inertia group $I(\fq|\fp)$ on the roots of $P(x)$ consists of a single transposition.
\end{corollary}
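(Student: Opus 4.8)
The plan is to use the two discriminant lemmas to pin down exactly how $\fp$ factors in $B=A[\theta]$, and then feed that factorization into Lemma~\ref{inertia group} to read off the cycle structure of the inertia group on the roots of $P(x)$.

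First, by Lemma~\ref{discriminant of extension} we have $v_{\fp}(\Delta(B/A))=v_{\fp}(\Delta(P(x)))=1$, since $\fp\,\|\,\Delta(P(x))$. Write $\fp B=\prod_i\fq_i^{e_i}$ and put $f_i=f(\fq_i|\fp)$. Lemma~\ref{ramification and discriminant} gives $1=v_{\fp}(\Delta(B/A))\ge\sum_i(e_i-1)f_i$, with equality if and only if $\ch K\nmid e_i$ for every $i$. If all $e_i$ were $1$, the right-hand side would be $0$ and the equality criterion would hold vacuously, forcing $v_{\fp}(\Delta(B/A))=0$, a contradiction; hence some $e_i\ge 2$, so $\sum_i(e_i-1)f_i\ge 1$. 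Therefore $\sum_i(e_i-1)f_i=1$, equality holds, and $\ch K\nmid e_i$ for all $i$. A sum of nonnegative integers $(e_i-1)f_i$ equalling $1$ means exactly one term is $1$ and the rest vanish; relabeling, $e_1=2$, $f_1=1$, and $e_i=1$ for all $i\ge 2$. (In particular $\ch K\neq 2$, so the ramification at $\fq_1$ is tame.)

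Next, apply Lemma~\ref{inertia group} with $G=\Gal(M/K)$, $H=\Gal(M/L)$, and $L=K(\theta)$, using the Remark following that lemma to identify the $G$-set $G/H$ with the set of roots of $P(x)$ in $M$. Fix a prime $\fq$ of $M$ over $\fp$. The lemma matches the primes $\fq_1,\dots,\fq_g$ of $L$ over $\fp$ with the orbits of the roots of $P(x)$ under $D(\fq|\fp)$, and states that the orbit $Y_j$ corresponding to $\fq_j$ is a disjoint union of $f(\fq_j|\fp)$ orbits of length $e(\fq_j|\fp)$ under $I(\fq|\fp)$. For $j=1$ this is a single $I(\fq|\fp)$-orbit of length $2$, say $\{a,b\}$; for $j\ge 2$ it consists of $f_j$ fixed points. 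So, acting on the roots of $P(x)$, the inertia group $I(\fq|\fp)$ has one orbit of size $2$ and fixes every other root.

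Finally, a subgroup of the symmetric group on the roots of $P(x)$ with this orbit structure is contained in $\{1,(a\,b)\}$, since each of its elements fixes every root outside $\{a,b\}$ and can only fix or swap $a$ and $b$; and since $\{a,b\}$ is genuinely an orbit of size $2$, some element of $I(\fq|\fp)$ sends $a$ to $b$, i.e.\ acts as $(a\,b)$. Hence the image of $I(\fq|\fp)$ in the symmetric group on the roots of $P(x)$ is exactly $\{1,(a\,b)\}$, a single transposition, as claimed. The one point that needs care is the middle step: extracting from ``$v_{\fp}(\Delta)=1$'' both that $\fp$ really ramifies and that the ramification is as mild as possible (a single $\fq_i$ with $e_i=2$, $f_i=1$, tame), which is precisely what makes Lemma~\ref{inertia group} produce a single transposition rather than a larger inertia group.
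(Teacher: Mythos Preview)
Your proof is correct and follows essentially the same route as the paper: use Lemma~\ref{discriminant of extension} to pass from $\fp\|\Delta(P(x))$ to $\fp\|\Delta(B/A)$, use Lemma~\ref{ramification and discriminant} to deduce the factorization $\fp B=\fP_1^2\fP_2\cdots\fP_m$ with $f(\fP_1|\fp)=1$, and then invoke Lemma~\ref{inertia group} (via the Remark) to conclude that $I(\fq|\fp)$ acts as a single transposition. You simply spell out in more detail the step from $v_\fp(\Delta(B/A))=1$ to the factorization shape, and the final passage from the orbit structure to the image being exactly $\{1,(a\,b)\}$.
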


\begin{proof}
	Let $\{\alpha_1,\ldots,\alpha_d\}$ be the roots of $P(x)$ in $M$ and $L=K(\alpha_i)$ for some $i$. Since $\fp||\Delta(P(x))$, Lemma \ref{discriminant of extension} implies $\fp||\Delta(B/A)$, where $B$ is the integral closure of $A$ in $L$. Then by Lemma \ref{ramification and discriminant}, $\fp B =\fP_1^2\fP_2\ldots \fP_m$ where $f(\fP_1|\fp)=1$ for some primes $\fP_1,\ldots,\fP_m$ in $B$. If $\fq$ is a prime of $M$ lying over $\fp$, then by Lemma \ref{inertia group}, the action of $I(\fq|\fp)$ on $\{\alpha_1,\ldots,\alpha_d\}$ consists of a single transposition.
\end{proof}

\begin{corollary}\label{transposition or three cycle} If $\ch(K)=2$ and $\fp^2||\Delta(P(x))$ in $A$, then for any prime $\fq$ of $M$ lying over $\fp$, the action of $I(\fq|\fp)$ on the roots of $P(x)$  consists of a single transposition or a single three cycle. 
\end{corollary}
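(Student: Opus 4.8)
The plan is to follow the template of the proof of Corollary~\ref{singletransposition}, refining the case analysis to account for the fact that equality need not hold in Lemma~\ref{ramification and discriminant} when $\ch K = 2$. By Lemma~\ref{discriminant of extension} the hypothesis $\fp^2\,\|\,\Delta(P(x))$ translates into $v_\fp(\Delta(B/A)) = 2$, where $B = A[\theta]$ and $L = K(\theta)$. Write $\fp B = \prod_i \fq_i^{e_i}$ and $f_i = f(\fq_i|\fp)$. Lemma~\ref{ramification and discriminant} then gives $\sum_i (e_i-1)f_i \le 2$, with equality exactly when no $e_i$ is divisible by $2$.

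First I would treat the case in which some $e_i$ is even. Then the inequality is strict, so $\sum_i(e_i-1)f_i \le 1$; since an even $e_i$ is at least $2$ and contributes a term $(e_i-1)f_i \ge f_i \ge 1$, we must have $\sum_i(e_i-1)f_i = 1$, and the only way this happens is $\fp B = \fq_1^2\fq_2\cdots\fq_m$ with $f(\fq_1|\fp) = 1$ and $e_j = 1$ for all $j \ge 2$. This is exactly the ramification type appearing in the proof of Corollary~\ref{singletransposition}, and Lemma~\ref{inertia group} (with the Remark following it) shows that the image of $I(\fq|\fp)$ in the symmetric group on the roots of $P(x)$ has a single orbit of size $2$ and fixes every other root, hence is generated by a single transposition.

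If instead every $e_i$ is odd, then equality holds and each $e_i-1$ is even, so $\sum_i(e_i-1)f_i = 2$ forces a single index $i_0$ with $e_{i_0} = 3$, $f_{i_0} = 1$, and $e_j = 1$ for $j \ne i_0$; that is, $\fp B = \fq_{i_0}^3\prod_{j\ne i_0}\fq_j$. By Lemma~\ref{inertia group} the image $\bar I$ of $I(\fq|\fp)$ on the roots of $P(x)$ has exactly one orbit $Y$, of size $3$, and fixes all other roots, so $\bar I$ is a transitive subgroup of $\mathrm{Sym}(Y) \cong S_3$, hence either cyclic of order $3$ or all of $S_3$. To rule out $S_3$ I would use that this ramification is tame: since $\ch K = 2$ divides none of the $e_i$, the completion $L_{\fq_{i_0}}/K_\fp$ is totally tamely ramified of degree $3$, its Galois closure over $K_\fp$ is again tamely ramified (it is contained in $K_\fp(\zeta_3,\pi^{1/3})$), and the remaining local factors of $P$ over $K_\fp$ are unramified; hence $M/K$ is tamely ramified above $\fp$, so $I(\fq|\fp)$ has order prime to $2$. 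Then $|\bar I|$ is odd, a transitive subgroup of $S_3$ of odd order is the cyclic group of order $3$, and $\bar I$ acts as a single $3$-cycle on the roots of $P(x)$.

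The step I expect to need the most care is exactly this last one: certifying that, in the $e_{i_0} = 3$ case, the inertia acts as a $3$-cycle rather than as a copy of $S_3$. The cleanest justification is the tameness argument above; equivalently, one can argue abstractly that any finite quotient of an inertia group in residue characteristic $2$ has a normal (hence unique) Sylow $2$-subgroup, a property no order-$6$ subgroup of $S_3$ enjoys, so $|\bar I| \ne 6$. Everything else is a routine combinatorial reading of Lemmas~\ref{ramification and discriminant} and~\ref{inertia group}, entirely parallel to the proof of Corollary~\ref{singletransposition}.
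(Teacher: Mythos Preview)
Your argument is correct and follows essentially the same route as the paper: translate $\fp^2\|\Delta(P(x))$ into $v_\fp(\Delta(B/A))=2$ via Lemma~\ref{discriminant of extension}, use Lemma~\ref{ramification and discriminant} to pin down the two possible ramification types $\fp B=\fP_1^2\fP_2\cdots\fP_m$ or $\fp B=\fP_1^3\fP_2\cdots\fP_m$ with $f(\fP_1|\fp)=1$, and then read off the orbit structure of $I(\fq|\fp)$ from Lemma~\ref{inertia group}.

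The one place you go beyond the paper is the final step in the $e=3$ case. The paper's proof simply invokes Lemma~\ref{inertia group} after identifying the ramification type and declares that the action ``consists of a single three cycle,'' but Lemma~\ref{inertia group} by itself only yields the orbit structure (one orbit of size~$3$, all other roots fixed), which is compatible with the image of inertia being either $C_3$ or all of $S_3$. Your tameness argument (or the equivalent observation that inertia in residue characteristic~$2$ has a normal $2$-Sylow, which $S_3$ does not) genuinely closes this gap. In the paper's application (Theorem~\ref{galois group}) this distinction never matters, since condition~(3) of Definition~\ref{defH} excludes the $e=3$ case outright; but for the corollary as stated your extra step is needed.
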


\begin{proof}
	With notation as in the proof of Corollary \ref{singletransposition}, Lemma \ref{ramification and discriminant} implies $\fp B =\fP_1^2\fP_2\ldots \fP_m$ where $f(\fP_1|\fp)=1$, or  $\fp B =\fP_1^3\fP_2\ldots \fP_m$ where \newline{} $f(\fP_1|\fp)=1$, for some primes $\fP_1,\ldots,\fP_m$ in $B$. Then, if $\fq$ is a prime of $M$ lying over $\fp$, by Lemma \ref{inertia group}, the action of $I(\fq|\fp)$ on $\{\alpha_1,\ldots,\alpha_d\}$ consists of a single transposition, or a single three cycle, respectively.
\end{proof}

We often work with splitting fields of rational functions, in this case we need the following useful result of Cullinan and Hajir \cite{CH}. Let $t$ be transcendental over a field $k$ and $\psi(x)=p(x)/q(x)$ be a rational function with coefficients in $k$ and $p(x)$, $q(x) \in k[x]$, then the splitting field of $\psi(x)-t$ over $k(t)$ is the splitting field of the polynomial $p(x)-tq(x)$ over $k(t)$.

\begin{lemma}[\cite{CH}, Proposition 1] \label{discriminant formula}  We have 
	\begin{align*}
	\Delta(p(x)-tq(x)) &= C \res(p'(x)q(x)-p(x)q'(x), p(x)-tq(x))\\
	&= C' \prod_{a\in\psi_\mathfrak{c}}(\psi (a)-t)^{e(a|\psi(a))-1}
	\end{align*}
	where $C,C'\in k$ are constants, $\psi_\mathfrak{c}=\{a \in \bar{k}:\psi
	'(a)=0\}$, and $e(a|\psi(a))$ is the ramification index of $a$ over
	$\psi(a)$.  
\end{lemma}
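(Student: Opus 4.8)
The plan is to derive both equalities from the classical relation between the discriminant of a polynomial and the resultant of that polynomial with its derivative, evaluating the resultant in question first at the zeros of $F(x):=p(x)-tq(x)$ and then at the zeros of the Wronskian-type polynomial $W(x):=p'(x)q(x)-p(x)q'(x)$. One may assume $\gcd(p,q)=1$, since clearing a common factor multiplies $F$ by an element of $k$ and leaves the critical points of $\psi$ unchanged. Write $d:=\deg_x F$ and let $c$ be the leading coefficient of $F$ as a polynomial in $x$.

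For the first equality, begin with $\Delta(F)=(-1)^{d(d-1)/2}c^{-1}\res(F,F')$, where $F'=p'(x)-tq'(x)$. The crucial algebraic identity is
\[
qF'-W=q'F ,
\]
checked by direct expansion; in particular $qF'\equiv W\pmod{F}$ in $k(t)[x]$. Since $\gcd(p,q)=1$ forces $\gcd(F,q)=1$, the resultant $\res(F,q)$ is a nonzero element of $k$ — reducing $F$ modulo $q$ gives $\res(F,q)=\pm\operatorname{lc}(q)^{\,d-\deg p}\res(p,q)\in k^{\times}$ — and it does not involve $t$. Using multiplicativity of the resultant in its second argument together with the fact that $\res(F,G)$ depends on $G$ only through $G\bmod F$, up to a power of $c$, one obtains
\[
\res(F,q)\,\res(F,F')=\res(F,qF')=c^{\,\deg(qF')-\deg W}\res(F,W),
\]
so that $\Delta(F)=C\,\res(W,F)$ with $C\in k$, as soon as the power of $c$ appearing here is benign. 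This is the only genuinely delicate point: when $\deg p=\deg q$ the polynomial $W$ drops degree because the leading terms of $p'q$ and $pq'$ cancel, and one must verify that the exponent of $c$ is then exactly $0$; more generally one wants a mild hypothesis such as $\deg p\ge\deg q$ to prevent $c$ from contributing a nontrivial power of $t$. This holds in every instance used in this paper: for polynomial $\psi$ one has $q\in k^{\times}$ and $c\in k^{\times}$, and for the generic rational function the coefficients of $p$ and $q$ are indeterminates, so no cancellation occurs; I would simply record the needed normalization and move on.

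For the second equality, evaluate the resultant over $\bar{k}$ at the zeros of $W$, counted with multiplicity:
\[
\res(W,F)=\operatorname{lc}(W)^{d}\prod_{W(a)=0}F(a)=\operatorname{lc}(W)^{d}\prod_{W(a)=0}\bigl(p(a)-tq(a)\bigr).
\]
A zero $a$ of $W$ that is not a pole of $\psi$ is precisely an element of $\psi_\mathfrak{c}$, and for such $a$ one has $p(a)-tq(a)=q(a)\bigl(\psi(a)-t\bigr)$; moreover, since $\psi'=W/q^{2}$, the multiplicity of $a$ as a zero of $W$ equals the order of vanishing of $\psi'$ at $a$, a quantity controlled by the ramification index $e(a\mid\psi(a))$. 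Thus such an $a$ contributes $(\psi(a)-t)$ raised to that exponent, times the $t$-independent factor $q(a)$ to the same power. The remaining zeros of $W$ are the multiple poles of $\psi$ (where $q(a)=0$), and they contribute the $t$-independent quantities $p(a)\neq0$; together with $\operatorname{lc}(W)^{d}$ and all the $q(a)$'s these are collected into a single constant $C'\in k^{\times}$. Combining with the first step yields the asserted factorization. The main obstacle in turning this into a complete proof is exactly the bookkeeping already flagged — tracking leading coefficients and the degree drop of $W$ when $\deg p=\deg q$, and correctly matching multiplicities of zeros of $W$ with ramification indices of $\psi$, poles included; the conceptual content is simply ``discriminant equals resultant, up to the leading coefficient'' together with the identity $qF'-W=q'F$.
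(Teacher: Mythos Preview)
The paper does not prove this lemma; it is quoted from Cullinan--Hajir \cite{CH} without argument, so there is no proof in the paper to compare against. Your approach is the standard one and is essentially correct: the identity $qF'-W=q'F$ is the right pivot for the first equality, and evaluating $\res(W,F)$ at the zeros of $W$ is the right way to obtain the second. Two small corrections to your bookkeeping are worth noting.

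First, your claim that ``no cancellation occurs'' for the generic rational function when $\deg p=\deg q=d$ is not quite right: the leading terms of $p'q$ and $pq'$ are both $d\,\mathrm{lc}(p)\mathrm{lc}(q)x^{2d-1}$ and \emph{always} cancel, so $\deg W\le 2d-2$. What actually makes the power of $c=\mathrm{lc}(F)$ disappear is that generically $\deg W=2d-2$ exactly, and then tracing through $\Delta(F)=\pm c^{-1}\res(F,F')$ together with $\res(F,F')=c^{\deg F'-\deg W+\deg q}\,\res(F,W)/\res(F,q)$ gives total exponent $(d-1)-(2d-2)+d-1=0$. Second, you are right to hedge with ``controlled by $e(a\mid\psi(a))$'': the multiplicity of $a$ as a zero of $W$ is the order of vanishing of $\psi'$ at $a$, which in the tame case is $e-1$, not $e$. (For $\psi(x)=x^{2}$ one has $\Delta(x^{2}-t)=4t$, exponent $1$, while $e=2$.) The paper's own use of the lemma---for instance in Proposition~\ref{sufficient} and the surrounding arguments, where simple critical points give a squarefree discriminant and double critical points give square factors---confirms that the intended exponent is the critical-point multiplicity; the phrase ``ramification index'' in the statement should be read accordingly.
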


Thus, we see that any prime $\fp$ of $k[t]$ that ramifies in
a splitting field for $p(x) - t q(x)$  must divide $\prod_{a\in\psi_\mathfrak{c}}(\psi (a)-t)^{e(a|\psi(a))-1}$.  We will use the notation \[\Delta(\psi(x)-t):=\prod_{a\in\psi_\mathfrak{c}}(\psi (a)-t)^{e(a|\psi(a))-1}.\]

The following result is a standard consequence of the Riemman Hurwitz formula (see \cite{stichtenoth}, for example).
\begin{lemma}\label{riemann hurwitz}  For any field $k$, $k(t)$ has no finite separable extensions with constant field $k$ of degree $d\geq 2$ which are unramified over all $\fp \in \bP_{k(t)}\setminus \{\fp_\infty\}$ and tamely ramified at $\fp_\infty$, here $\bP_{k(t)}$ denotes the set of primes of $k(t)$.
\end{lemma}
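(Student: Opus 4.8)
The plan is to assume such an extension exists and force a contradiction with the Riemann--Hurwitz genus formula. Suppose $F/k(t)$ is a finite separable extension of degree $d\geq 2$ whose constant field is $k$, which is unramified over every $\fp\in\mathbb P_{k(t)}\setminus\{\fp_\infty\}$ and tamely ramified at $\fp_\infty$. Since the constant field of $F$ equals that of $k(t)$, the function field $F$ corresponds to a smooth projective geometrically connected curve over $k$ of some genus $g_F\geq 0$, and the fundamental identity reads $\sum_{\fP\mid\fp_\infty} e(\fP\mid\fp_\infty)f(\fP\mid\fp_\infty)=d$. Because $F/k(t)$ is separable, the Riemann--Hurwitz formula applies over the arbitrary constant field $k$ (Stichtenoth, \cite{stichtenoth}): with $g_{k(t)}=0$ this gives
\[
2g_F-2 = -2d + \deg\mathfrak{D},
\]
where $\mathfrak{D}=\mathrm{Diff}(F/k(t))$ is the different of the extension.

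Next I would bound $\deg\mathfrak{D}$ from above. By hypothesis $F/k(t)$ is unramified outside $\fp_\infty$, so $\mathfrak{D}$ is supported only on the primes $\fP$ of $F$ lying over $\fp_\infty$; and since the ramification there is tame, Dedekind's different theorem gives $d(\fP\mid\fp_\infty)=e(\fP\mid\fp_\infty)-1$ for each such $\fP$. Using $\deg\fp_\infty=1$, hence $\deg\fP=f(\fP\mid\fp_\infty)$, together with the fundamental identity above,
\[
\deg\mathfrak{D}=\sum_{\fP\mid\fp_\infty}\bigl(e(\fP\mid\fp_\infty)-1\bigr)f(\fP\mid\fp_\infty)=d-\sum_{\fP\mid\fp_\infty}f(\fP\mid\fp_\infty)\leq d-1,
\]
the final inequality holding because at least one prime of $F$ lies above $\fp_\infty$, so $\sum_{\fP\mid\fp_\infty}f(\fP\mid\fp_\infty)\geq 1$.

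Combining the two displays yields $2g_F-2\leq -2d+(d-1)=-(d+1)$, so $2g_F\leq 1-d\leq -1$ since $d\geq 2$, contradicting $g_F\geq 0$. Hence no such extension exists.

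There is no serious obstacle here; the only points requiring care are bookkeeping ones. First, one must genuinely use the hypothesis that the constant field is $k$: this is what makes the degree in Riemann--Hurwitz equal to $[F:k(t)]=d$ and keeps $g_{k(t)}=0$, and it is what guarantees $g_F\geq 0$. Second, since $k$ may be imperfect, one should invoke the version of Riemann--Hurwitz valid for separable extensions of function fields over an arbitrary constant field (as in \cite{stichtenoth}) rather than a statement assuming $k$ algebraically closed; this is exactly why separability of the extension is part of the hypothesis and is used.
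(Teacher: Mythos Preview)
Your proof is correct and follows essentially the same approach as the paper: apply Riemann--Hurwitz, bound the different contribution at $\fp_\infty$ by $d-1$ using tameness and the fundamental identity, and derive a contradiction with $g_F\geq 0$. The paper phrases the conclusion contrapositively (some finite prime must ramify) rather than as a direct contradiction, but the argument is the same, and your write-up is if anything more careful in justifying $\deg\fP=f(\fP\mid\fp_\infty)$ and in invoking Dedekind's different theorem explicitly.
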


\begin{proof} Let $F$ be any finite extension of $k(t)$ with field of constants $k$ and let $d =[F:k(t)]$. For the prime $\fp_\infty$ of $k(t)$, $\sum_{\fq|\fp_\infty} (e(\fq|\fp_\infty)-1) \deg \fq \leq d-1$ where the sum ranges over all $\fq$ extending $\fp_\infty$. Let $g$ be the genus of $F$. From the Riemann Hurwitz formula we have 
	\begin{align*}
	2g-2&= -2d +\sum_{\fp\in \bP_{k(t)}} \sum_{\fp'|\fp} (e(\fp'|\fp)-1) \deg \fp'  \\
	2g-2&\leq -2d +\sum_{\substack{\fp\in \bP_{k(t)}\\ \fp\neq\fp_\infty}} \sum_{\fp'|\fp} (e(\fp'|\fp)-1) \deg \fp' + d-1 ,\\
	\end{align*}
	where the second sum is taken over all $\fp'$ extending $\fp$ in $F$. Then since $g\geq0$, 
	\[
	d-1 \leq \sum_{\substack{\fp\in \bP_{k(t)}\\ \fp\neq\fp_\infty}} \sum_{\fp'|\fp} (e(\fp'|\fp)-1) \deg \fp'.\]
	Since $d\geq 2$, some prime in $\bP_{k(t)}\setminus \{\fp_\infty\}$ must ramify in $F$. 
\end{proof}

\subsection{Results on subgroups of $S_d$}

Let $F$ be any field, we say a polynomial $f(x) \in F[x]$
is \textit{indecomposable} if $f(x)$ cannot be written 
as $f(x)=g(h(x))$ for $g,h \in F[x]$ with $\deg g, \deg h >1$. A group $G$ acting on a set $S$ is said to be \textit{primitive} if it acts transitively and preserves no nontrivial partition of $S$. The following is a result of Fried \cite{Fried} as referenced in \cite{Cohen}.

\begin{lemma}[\cite{Cohen}, Lemma 3.1] \label{indecomposable}  A separable polynomial $f(x)$ over a field $F$ is indecomposable if and only if the Galois group $G$ of $f(x)-t$ over $F(t)$ is primitive on the roots of $f(x)-t$. 
\end{lemma}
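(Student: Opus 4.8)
The plan is to run the standard three-way dictionary relating block systems of the monodromy group, intermediate fields of $F(\rho)/F(t)$, and functional decompositions of $f$. Write $d=\deg f$. Since $f$ is separable, $f(x)-t$ has $d$ distinct roots in an algebraic closure of $F(t)$; it is also irreducible over $F(t)$, because any nontrivial factorization would (by Gauss's lemma) have to involve a factor lying in $F[x]$, which upon substituting a root would force $t$ to be algebraic over $F$. Fix a root $\rho$ of $f(x)-t$ and set $E:=F(t)(\rho)$. Then $t=f(\rho)\in F[\rho]$, so $E=F(\rho)$ is a rational function field; moreover $\rho$ is transcendental over $F$, $[E:F(t)]=d$, and $E/F(t)$ is separable. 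Let $M$ be the splitting field of $f(x)-t$ over $F(t)$, $G=\Gal(M/F(t))$, and $H=\Gal(M/E)$; then $[G:H]=d$, $G$ is transitive on the roots, and the $G$-set of roots of $f(x)-t$ is identified $G$-equivariantly with $G/H$ (the same identification used in the proof of the wreath-product lemma above).

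Next I would reduce primitivity to a statement about subfields. By definition $G$ acts primitively on $G/H$ precisely when the point stabilizer $H$ is a maximal subgroup of $G$; since $M/F(t)$ is Galois and $E/F(t)$ separable, the Galois correspondence (subgroups between $H$ and $G$ versus fields between $F(t)$ and $E$) turns this into the assertion that there is no field strictly between $F(t)$ and $E=F(\rho)$. So it suffices to prove: $f$ is decomposable over $F$ if and only if there is a proper intermediate field $F(t)\subsetneq L\subsetneq F(\rho)$. One implication is immediate — if $f=g\circ h$ with $\deg g,\deg h>1$, then $L:=F(h(\rho))$ works, since $[F(\rho):L]=\deg h>1$ and $[L:F(t)]=\deg g>1$, hence $H$ is not maximal and $G$ is imprimitive. (Alternatively, for this direction one can argue hands-on that the fibers $\{\alpha:h(\alpha)=\beta_i\}$, as $\beta_i$ runs over the roots of $g(y)-t$, which are distinct because $f(x)-t$ is separable, form a nontrivial $G$-invariant partition of the roots of $f(x)-t$.)

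For the converse, suppose $F(t)\subsetneq L\subsetneq F(\rho)$. By L\"uroth's theorem $L=F(w)$ for some $w\in F(\rho)$, so $t$ and $\rho$ are rational functions of $w$ over $F$; this already yields a rational decomposition of $f$ over $F$, and the only genuinely nonformal step is to normalize it into a decomposition into honest \emph{polynomials} over $F$. For this I would track ramification at infinity, in the style of Lemma \ref{riemann hurwitz}. The pole $P_\infty$ of $\rho$ on $F(\rho)$ is totally ramified, with index $d$, over the infinite place $\fp_\infty$ of $F(t)$, because $t=f(\rho)$ has a pole of order $d$ at $P_\infty$ and of order $1$ at $\fp_\infty$. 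Hence $P_\infty$ is totally ramified over its restriction $Q$ to $L$ (the bounds $e(P_\infty|Q)\le[F(\rho):L]$ and $e(Q|\fp_\infty)\le[L:F(t)]$ multiply to $d$, forcing equality in both), $Q$ is $F$-rational, $P_\infty$ is the unique place of $F(\rho)$ over $Q$, and $Q$ is totally ramified over $\fp_\infty$. Choosing the L\"uroth generator $w$ of $L$ so that its unique pole is at $Q$, it follows that $w$, viewed as a function of $\rho$, has its only pole at $P_\infty$, so $w=h(\rho)$ with $h\in F[x]$ of degree $[F(\rho):L]$; similarly $t$ has its only pole on $L$ at $Q$, so $t=g(w)$ with $g\in F[x]$ of degree $[L:F(t)]$. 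Then $f(\rho)=t=g(h(\rho))$ gives $f=g\circ h$ over $F$ with $\deg g,\deg h>1$, so $f$ is decomposable.

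I expect this last normalization — converting the rational decomposition produced by L\"uroth into a genuine polynomial decomposition defined over $F$ by absorbing the place-at-infinity data into the choice of generator — to be the main obstacle; the block-system translation and the Galois-correspondence reduction are formal once the setup is in place.
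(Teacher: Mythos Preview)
Your proof is correct and follows essentially the same route as the paper: both directions hinge on the Galois correspondence (primitivity $\Leftrightarrow$ maximality of the point stabilizer $\Leftrightarrow$ no proper intermediate field between $F(t)$ and $F(\rho)$), with L\"uroth's theorem supplying a generator for any such intermediate field. The paper's proof simply asserts that the rational decomposition produced by L\"uroth can be upgraded to a polynomial decomposition over $F$, whereas you actually supply the argument via total ramification at infinity; your anticipation that this normalization is the only nonformal step is exactly right.
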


\begin{proof} Let $\{a_1,\dots,a_d\}$ be the roots of $f(x)-t$. Since $f(x)-t$ is irreducible over $F(t)$, $G$ must be transitive. Suppose $G$ is imprimitive. Then there is some nontrivial partition of $\{a_1,\dots,a_d\}$ into disjoint subsets $S_1,\dots,S_n$ preserved by $G$. Let $S=S_i$ be one of these subsets with $\#S_i>1$. If $a\in S$ then $\Stab_G(a) \subsetneq \{\sigma \in G|\sigma(S)=S\}$. So $\Stab_G(a)$ is not a maximal subgroup of $G$. Hence, there is a field strictly between $F(t)$ and $F(a)$, which by Luroth's Theorem, must be of the form $F(u)$. Thus, $u=h(a)$ and $t=g(u)$ for (non-linear) rational functions $g,h$ with coefficients in $F$. Then since $f(a)=g(h(a))$, we can find (non-linear) polynomials $g_1,h_1$ with coefficients in $F$ such that $f(x)=g_1(h_1(x))$. Thus, $f$ is decomposable over $F$. 
	
	Conversely, if $f$ is decomposable then we can write $f(x)=g(h(x))$ for non-linear $g(x), h(x) \in F[x]$. Then the relation $a\sim b$ if $h(a)=h(b)$ gives a nontrivial partition of $\{a_1,\dots,a_d\}$ preserved by $G$.
\end{proof}

The next two results are standard and are provided here for completeness.

\begin{lemma}\label{primitive} If $G$ is a primitive subgroup of $S_d$ that contains a transposition then $G=S_d$. 
\end{lemma}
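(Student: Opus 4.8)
The plan is to prove the classical fact that a primitive permutation group containing a transposition must be the full symmetric group, via an equivalence-relation (block) argument on the support of the transpositions contained in $G$. First I would consider the relation $\sim$ on $\{1,\dots,d\}$ defined by: $a \sim b$ if $a = b$ or the transposition $(a\,b)$ lies in $G$. The reflexive and symmetric properties are immediate; the key point is transitivity, which follows from the identity $(a\,c) = (a\,b)(b\,c)(a\,b)$ — if $(a\,b),(b\,c) \in G$ with $a,b,c$ distinct, then $(a\,c) \in G$. So $\sim$ is a genuine equivalence relation, partitioning $\{1,\dots,d\}$ into classes.

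Next I would show this partition is $G$-invariant. For $\sigma \in G$, if $(a\,b) \in G$ then $\sigma (a\,b) \sigma^{-1} = (\sigma(a)\,\sigma(b)) \in G$, so $a \sim b$ implies $\sigma(a) \sim \sigma(b)$; hence $\sigma$ permutes the equivalence classes. By hypothesis $G$ is primitive, so the only $G$-invariant partitions are the two trivial ones: either all classes are singletons, or there is a single class equal to $\{1,\dots,d\}$. The first case is impossible because $G$ contains at least one transposition $(a\,b)$, which forces $a \sim b$ with $a \neq b$, so the class of $a$ has size at least two. Therefore the partition is trivial in the other direction: $\{1,\dots,d\}$ is a single equivalence class, meaning $(a\,b) \in G$ for every pair $a \neq b$.

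Finally, since $G$ contains every transposition in $S_d$ and the transpositions generate $S_d$, we conclude $G = S_d$. I do not expect any serious obstacle here; the only thing requiring a moment's care is verifying transitivity of $\sim$ using the conjugation identity, and making sure the primitivity hypothesis is invoked correctly — namely that $G$ acts transitively (so that singleton classes would indeed be permuted among themselves only in the trivial way) and preserves no nontrivial partition. Note transitivity of $G$ is part of the definition of primitive used in the paper, so it is available; it also guarantees $G$ is nonempty and, combined with containing a transposition, rules out the all-singletons case uniformly.
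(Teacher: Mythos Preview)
Your proof is correct and follows essentially the same approach as the paper: define the equivalence relation $i\sim j$ iff $i=j$ or $(i\,j)\in G$, observe it is $G$-invariant, and use primitivity together with the existence of a transposition to conclude there is a single class, so $G$ contains all transpositions. The paper's version is terser (it simply asserts the relation is a $G$-invariant equivalence relation), whereas you spell out transitivity and invariance explicitly, but the argument is the same.
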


\begin{proof} Define a relation on $\{1,\dots,d\}$ by $i\sim j$ if either $i=j$ or $G$ contains the transposition $(i j)$. This is clearly a $G$-invariant equivalence relation. Since $G$ contains a transposition, there are fewer than $d$ equivalence classes. Then since $G$ is primitive, there must be only one equivalence class. So $G$ contains all the transpositions which implies $G=S_d$.
\end{proof}

\begin{lemma}\label{transitive subgroup} If $G$ is a transitive subgroup of $S_d$ that is generated by transpositions then $G=S_d$.
\end{lemma}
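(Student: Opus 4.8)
The plan is to imitate the argument used in the proof of Lemma \ref{primitive}: a group generated by transpositions carries a natural equivalence relation on $\{1,\dots,d\}$, and transitivity will force that relation to be trivial.

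First I would define a relation $\sim$ on $\{1,\dots,d\}$ by declaring $i \sim j$ whenever $i = j$ or $(i\,j) \in G$. Reflexivity and symmetry are immediate, and transitivity follows from the identity $(i\,k) = (i\,j)(j\,k)(i\,j)$: if $(i\,j), (j\,k) \in G$ then $(i\,k) \in G$. Thus $\sim$ is an equivalence relation. It is moreover $G$-invariant, since for $\sigma \in G$ we have $\sigma (i\,j)\sigma^{-1} = (\sigma(i)\,\sigma(j))$, so $i \sim j$ implies $\sigma(i) \sim \sigma(j)$. Hence the equivalence classes of $\sim$ form a system of blocks for the action of $G$ on $\{1,\dots,d\}$.

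Next I would observe that each transposition in the given generating set of $G$ has its two moved points in the same $\sim$-class and fixes every other point, so each such generator maps every equivalence class onto itself. Since these transpositions generate $G$, every element of $G$ maps each $\sim$-class onto itself. But $G$ is transitive on $\{1,\dots,d\}$, so there can be only one equivalence class. Therefore $(i\,j) \in G$ for all $i \neq j$, and hence $G = S_d$.

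I do not anticipate any real obstacle: the only points needing a moment's care are the verification that $\sim$ is transitive (via the conjugation identity for transpositions) and the remark that the generating transpositions fix each block \emph{setwise} rather than merely permuting the blocks, which together with transitivity of $G$ collapses the block system to a single point. One could equivalently phrase this graph-theoretically — form the graph on $\{1,\dots,d\}$ whose edges are the generating transpositions, note transitivity forces it to be connected, and invoke the standard fact that the transpositions along a spanning tree already generate the full symmetric group — but I would prefer the equivalence-relation formulation above, as it runs parallel to the proof of Lemma \ref{primitive}.
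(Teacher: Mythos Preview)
Your proof is correct, but it takes a genuinely different route from the paper's. The paper argues by picking a maximal $m$ for which $G$ contains a copy of $S_m$ acting on some $m$-element subset, and then --- assuming $m<d$ --- uses transitivity together with generation by transpositions to produce a transposition $(i\,j)\in G$ with $i\leq m<j$, so that $\langle S_m,(i\,j)\rangle\cong S_{m+1}$, contradicting maximality. Your approach instead reuses the equivalence relation from Lemma~\ref{primitive} and adds the crucial extra observation (unavailable in the primitive-but-only-one-transposition setting) that every \emph{generating} transposition lies entirely within a single $\sim$-class and therefore fixes each class setwise; transitivity then collapses the block system. Your argument has the advantage of running in exact parallel with Lemma~\ref{primitive}, so the two lemmas become variations on a single theme; the paper's argument, by contrast, is more constructive, explicitly building up larger and larger symmetric subgroups. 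Both are short and standard, and neither requires anything beyond the conjugation identity for transpositions.
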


\begin{proof} Let $\cS$ be the set of all $k$ such that there exists some subgroup $H$ of $G$ isomorphic to $S_k$. $\cS$ is nonempty since there are subgroups of $G$ which are isomorphic to $S_1$ and $S_2$. Let $m\in \cS$ be maximal. Suppose $m \neq d$, after renumbering elements of $\{1,2,\dots,d\}$ we can assume that $H$ acts on $\{1,2,\dots,m\}$. Now, since $G$ is transitive and generated by transpositions, there is some $(i j) \in G$ such that $i\in \{1,\dots,m\}$ and $j > m$. But then the subgroup of $G$ generated by $H\cup\{(i j)\}$ is isomorphic to $S_{m+1}$, contradicting the maximality of $m$.
\end{proof}

\subsection{The Zariski Topology on $\mathcal{P}_d(k)$ and $\operatorname{Rat}_d(k)$}

Let $k$ be an algebraically closed field. 
Given a point $(a_0,\dots,a_d)$ in $\bA^{d+1}(k)$ with $a_d \neq 0$, $a_dx^d+ \dots+ a_0$ is a polynomial of degree $d$ in $k[x]$. 
We denote the set of all such $(a_0,\dots,a_d)$ by $\cP_d(k)$ and give $\cP_d(k)$ the subspace topology inherited from the Zariski toplogy on $\bA^{d+1}(k)$. 

Similarly, given a point $(a_0,\dots, a_d, b_0,
\dots, b_d)$ in $\bA^{2d+2}(k)$, we set  $p = a_d x^d + \dots + a_0$,
$q = b_d x^d + \dots + b_0$, and $\varphi = p/q$.  If the resultant of $p$ and $q$ is
nonzero and either $a_d$ or $b_d$ is nonzero, then $\varphi$ is a rational function of
degree $d$ in $k(x)$.  We denote the set of such $(a_0,\dots, a_d, b_0,
\dots, b_d)$ by
$\Rat_d(k)$ and give $\Rat_d(k)$ the subspace topology inherited from the Zariski topology on $\bA^{2d+2}(k)$.


\section{Galois Groups of $\varphi^n(x)-t$}\label{H}

In this section, let $k$ be a field with characteristic $p$ (where $p$ is allowed to be $0$), let  $x,t$ be algebraically independent variables over $k$, and let $\varphi(x) \in k(x)$ be a rational function with degree $d>1$. Then, for $n\in \bN$, $\varphi^n(x)-t$ is irreducible, and if $\frac{d}{dx} \varphi(x)\neq 0$ then $\varphi^n(x)-t$ is $x$-separable, since $\frac{d}{dx} (\varphi^n(x)-t)=\frac{d}{dx} \varphi^n(x) \neq 0$ by induction on $n$. For fixed $N\in\bN$, we give conditions on $\varphi(x)$ that ensure  $\Gal(\varphi^N(x)-t/k(t)) \cong [S_d]^N$.  Then we show that these conditions are not too restrictive as long as $(d,p) \neq (2,2)$ by showing that when $k$ is algebraically closed the set of all $f(x)\in k[x]$ of degree $d$ with the property $\Gal(f^N(x)-t/k(t))\cong [S_d]^N$ contains a nonempty Zariski-open subset of $\cP_d(k)$.

\begin{theorem}\label{sufficient} Let $\varphi(x) \in k(x)$ with $\Gal(\varphi(x)-t/k(t))\cong S_d$. If $\ch k \neq 2$, suppose $\varphi$ has some critical point $a \in k$ with multiplicity one such that $\varphi^n(a)\neq \varphi^m(b)$ for all $m\leq n\leq N$, unless $m=n$ and $b=a$, and if $\ch k =2$, suppose $\varphi$ has some critical point $a$ such that $\varphi^n(a)\neq \varphi^m(b)$ for all $m\leq n\leq N$, unless $m=n$ and $b=a$ and $I(\fq|\fp)$ consists of a single transposition for any prime $\fq$ lying above $\fp=k(t)\cap(\varphi(a)-t)$ in the splitting field of $\varphi(x)-t$ over $k(t)$. Then $\Gal(\varphi^N(x)-t/k(t))\cong [S_d]^N$.
\end{theorem}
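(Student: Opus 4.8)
The plan is to induct on $N$, the case $N=1$ being the hypothesis $\Gal(\varphi(x)-t/k(t))\cong S_d$. Write $K=k(t)$ and let $L_n$ denote the splitting field of $\varphi^n(x)-t$ over $K$. Since $t$ is transcendental over $k$ it is not a branch value of the separable map $\varphi^n$, so $\varphi^n(x)-t$ has $d^n$ distinct roots and Corollary \ref{iterated wreath} gives $\Gal(L_n/K)\hookrightarrow[S_d]^n$. Restriction gives a surjection $\Gal(L_N/K)\twoheadrightarrow\Gal(L_{N-1}/K)\cong[S_d]^{N-1}$ (by induction) whose kernel $G:=\Gal(L_N/L_{N-1})$ sits inside the base group of $[S_d]^N=[S_d]^{N-1}[S_d]$, namely the direct product $N:=\prod_\alpha N_\alpha$ of copies $N_\alpha\cong S_d$ indexed by the $d^{N-1}$ roots (``leaves'') $\alpha$ of $\varphi^{N-1}(x)-t$; here $N_\alpha$ records the action on the roots of $\varphi(x)-\alpha$, and $L_N$ is the compositum over $L_{N-1}$ of the splitting fields $M_\alpha$ of the $\varphi(x)-\alpha$. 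Comparing orders, $\Gal(L_N/K)\hookrightarrow[S_d]^N$ will be an isomorphism once we show $G=N$, so everything reduces to proving $\Gal(L_N/L_{N-1})\cong(S_d)^{d^{N-1}}$.

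The heart of the argument is \emph{disjoint ramification}: for each leaf $\alpha$ we produce a prime $\fP_\alpha$ of $L_{N-1}$ that ramifies in $M_\alpha/L_{N-1}$ with inertia a single transposition of the roots of $\varphi(x)-\alpha$, and that is unramified in $M_\beta/L_{N-1}$ for every other leaf $\beta$. To build it, look at the place $t=\varphi^N(a)$ of $K$: by the orbit hypothesis $\varphi^N(a)$ is not of the form $\varphi^m(b)$ with $b$ critical and $m\le N-1$, so $\varphi^N(a)$ is not a branch value of $\varphi^{N-1}$ and this place is unramified in $L_{N-1}/K$. A local Hensel/discriminant computation then shows that modulo any prime of $L_{N-1}$ over $t=\varphi^N(a)$ the $d^{N-1}$ leaves reduce bijectively onto the fiber $\varphi^{-(N-1)}(\varphi^N(a))$, which contains $\varphi(a)$; using transitivity of $\Gal(L_{N-1}/K)$ on the leaves we may pick such a prime $\fP_\alpha$ for which $\alpha\equiv\varphi(a)$, and then $\alpha$ is the only leaf with this reduction. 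Now Lemma \ref{discriminant formula} writes $\Delta(\varphi(x)-\gamma)$, for any leaf $\gamma$, as a constant times a product $\prod_c(\varphi(c)-\gamma)^{e_c}$ over the critical points $c$, the simple critical point $a$ contributing a single factor $\varphi(a)-\gamma$; the bijectivity above together with $\varphi(c)\neq\varphi(a)$ for $c\neq a$ and $\varphi^N(c)\neq\varphi^N(a)$ for $c\neq a$ force $v_{\fP_\alpha}(\varphi(c)-\beta)=0$ for all $c$ whenever $\beta\neq\alpha$, so $v_{\fP_\alpha}(\Delta(\varphi(x)-\alpha))=1$ while $v_{\fP_\alpha}(\Delta(\varphi(x)-\beta))=0$ for $\beta\neq\alpha$. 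Corollary \ref{singletransposition} (applied to the irreducible factors of $\varphi(x)-\alpha$ over $L_{N-1}$) now yields the transposition, and Lemmas \ref{ramification and discriminant}--\ref{discriminant of extension} give the non-ramification in the $M_\beta$. For rational $\varphi$ one runs this with the Cullinan--Hajir normalization $p(x)-tq(x)$; when $\ch k=2$ the discriminant no longer reads off the inertia type, so instead one assumes it outright at $\fp=K\cap(\varphi(a)-t)$ and transports it to $\fP_\alpha$ through the extension $L_{N-1}/k(\alpha)$, which is unramified at $\fP_\alpha$.

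To finish, fix a prime $\fq$ of $L_N$ over $\fP_\alpha$. Its inertia group $I(\fq|\fP_\alpha)\subseteq G$ maps onto a group generated by a transposition in the factor $N_\alpha$ and trivially into every other factor, so $I(\fq|\fP_\alpha)$ is a transposition lying in $G_\alpha:=G\cap N_\alpha$ (note $G$ fixes $L_{N-1}$, hence each leaf, hence normalizes each $N_\alpha$). In particular $G_\alpha\neq1$. On the other hand $M_\alpha=\Omega_\alpha\cdot L_{N-1}$ where $\Omega_\alpha$ is the splitting field of $\varphi(x)-\alpha$ over $k(\alpha)\cong k(t)$, so $\Gal(\Omega_\alpha/k(\alpha))\cong S_d$; by the theorem on natural irrationalities $\Gal(M_\alpha/L_{N-1})\cong\Gal(\Omega_\alpha/\Omega_\alpha\cap L_{N-1})$, a \emph{normal} subgroup $H$ of $S_d$ since $\Omega_\alpha\cap L_{N-1}$ is Galois over $k(\alpha)$; as $H$ contains the transposition above, and a normal subgroup of $S_d$ containing a transposition contains all of them, $H=S_d$. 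Thus the $\alpha$-th projection $p_\alpha\colon G\to N_\alpha$ is onto, while $p_\alpha|_{G_\alpha}$ is injective with image a nontrivial normal subgroup of $p_\alpha(G)=S_d$ containing a transposition; hence $G_\alpha=N_\alpha$. Letting $\alpha$ vary, $G\supseteq\prod_\alpha N_\alpha=N$, so $G=N$ and the induction closes.

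The main obstacle is the disjoint-ramification step: one must see that the value $\varphi^N(a)$ (which lives ``one level above'' the $(N-1)$-st tree) is a regular value of $\varphi^{N-1}$ — this is exactly what the orbit-separation hypothesis buys — and then track reductions modulo $\fP_\alpha$ carefully enough to conclude that $\alpha$ is the unique leaf meeting $\varphi(a)$, so that $\fP_\alpha$ ramifies in $M_\alpha$ and in no other $M_\beta$. The surrounding work is bookkeeping: checking that $A[\theta]$ is locally maximal at $\fP_\alpha$ (automatic, the discriminant valuation there being $1$), passing to the $p(x)-tq(x)$ form for rational functions together with the place at infinity, and, in characteristic $2$, circumventing wild ramification by carrying the transposition hypothesis along the tree rather than extracting it from discriminants.
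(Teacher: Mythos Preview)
Your proposal is correct and follows essentially the same approach as the paper: induction on the level, with the inductive step driven by disjoint ramification at the prime coming from $\varphi(a)$, followed by the observation that a normal subgroup of $S_d$ containing a transposition must be all of $S_d$. The only cosmetic difference is bookkeeping: the paper works with the prime $(\varphi(a)-\alpha_i)$ down in $k[\alpha_i]$ and shows directly that $\Gamma=\Gal(M_i/M_i\cap\widehat{M_i})$ is normal in $\Gal(M_i/k(\alpha_i))\cong S_d$ (because $M_i\cap\widehat{M_i}$ is Galois over $k(\alpha_i)$), whereas you lift to a prime $\fP_\alpha$ of $L_{N-1}$ and route the argument through $p_\alpha(G)$ before concluding $G_\alpha=N_\alpha$; these are the same subgroup viewed from two sides, and the paper's identification is slightly more direct.
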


The following proposition follows almost immediately from Theorem \ref{sufficient}.

\begin{proposition} Suppose $\ch k \neq 2$ and $d \nmid \ch k$. Then if $\varphi(x)$ is any rational function with degree $d$ such that each of the critical points in $\overline{k}$ have multiplicity one and there is some critical point $a$ such that $\varphi^n(a)\neq \varphi^m(b)$ for any critical point $b\neq a$ and any $m\leq n$, we have $\Gal(\varphi^N(x)-t/k(t))\cong [S_d]^N$ for all $N$.
\end{proposition}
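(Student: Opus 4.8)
The plan is, for each fixed $N$, to reduce everything to verifying the hypotheses of Theorem \ref{sufficient} and then quote that theorem; since $N$ is arbitrary this gives the conclusion for all $N$. Two things must be arranged that are not literally present in the statement: that the distinguished critical point $a$ may be taken inside $k$ (Theorem \ref{sufficient} requires $a\in k$ when $\ch k\neq 2$), and that $\Gal(\varphi(x)-t/k(t))\cong S_d$. For the first, note that the critical points of $\varphi$ are the zeros of the numerator of $\varphi'$, and this numerator is separable precisely because all critical points have multiplicity one; hence $a$ is separable over $k$, and $k':=k(a)$ is a finite extension with $\ch k'=\ch k\neq 2$, $d\nmid\ch k'$, and the same hypotheses on $\varphi$. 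Restriction embeds $\Gal(\varphi^N(x)-t/k'(t))$ into $\Gal(\varphi^N(x)-t/k(t))$, which itself embeds in $[S_d]^N$ by Corollary \ref{iterated wreath} (both act on the same $d^N$ roots in $\overline{k(t)}$); so it suffices to prove the result over $k'$, and we may assume $a\in k$ from the start.

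Next I would prove $\Gal(\varphi(x)-t/k(t))\cong S_d$; call this group $G$. By the discussion at the start of Section \ref{H}, $\varphi(x)-t$ is irreducible and $x$-separable over $k(t)$ (here one uses $d\nmid\ch k$), so $G$ is transitive on the $d$ roots. To produce a transposition, combine Lemma \ref{discriminant formula} with the $n=m=1$ instance of the orbit hypothesis: the latter says $a$ is the unique critical point with critical value $\varphi(a)$, and since $a$ has multiplicity one and $\ch k\neq 2$, the prime $\fp=(\varphi(a)-t)$ — of degree one because $a\in k$ — divides $\Delta(\varphi(x)-t)$ to the first power; by Corollary \ref{singletransposition} the inertia group over $\fp$ acts as a single transposition, so $G$ contains a transposition. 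Finally $G$ is primitive: arguing via Luroth's theorem exactly as in the proof of Lemma \ref{indecomposable}, if $G$ were imprimitive then $\varphi=\psi_1\circ\psi_2$ with $\deg\psi_1,\deg\psi_2>1$; because $d\nmid\ch k$ the factors $\psi_1,\psi_2$ are separable and tamely ramified somewhere, and a chain-rule computation of the multiplicities of the critical points of $\varphi$ in terms of those of $\psi_2$, of $\psi_1$, and of the $\psi_2$-preimages of critical points of $\psi_1$, forces either a critical point of $\varphi$ of multiplicity $\geq 3$ or two distinct critical points of $\varphi$ with a common value, that value being forced to be $\varphi(a)$ — contradicting either the multiplicity-one hypothesis or the uniqueness built into the orbit condition. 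A primitive subgroup of $S_d$ containing a transposition equals $S_d$ by Lemma \ref{primitive}, so $G\cong S_d$.

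Now, for the fixed $N$, all hypotheses of Theorem \ref{sufficient} are in place: $a\in k$ has multiplicity one, $\Gal(\varphi(x)-t/k(t))\cong S_d$, and the orbit condition $\varphi^n(a)\neq\varphi^m(b)$ for $m\leq n\leq N$ (save $m=n$, $b=a$) is the given hypothesis restricted to these exponents. Applying Theorem \ref{sufficient} gives $\Gal(\varphi^N(x)-t/k(t))\cong[S_d]^N$, and since $N$ was arbitrary this holds for all $N$.

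I expect the primitivity step to be the only real obstacle. Transitivity is free and the transposition is routine once Lemma \ref{discriminant formula} and Corollary \ref{singletransposition} are in hand; but converting ``every critical point has multiplicity one, and some critical value is attained only once'' into indecomposability of $\varphi$ requires the ramification bookkeeping indicated above. (If one instead works from the stronger condition that \emph{all} critical values are distinct, this collapses to the immediate remark that a composite $\psi_1\circ\psi_2$ with $\deg\psi_i>1$ always yields a repeated critical value, so $\varphi$ is indecomposable at once.)
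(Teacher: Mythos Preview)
Your route to $\Gal(\varphi(x)-t/k(t))\cong S_d$ differs from the paper's, and the primitivity step has a genuine gap. The paper never argues via primitivity or indecomposability. It asserts (from the multiplicity-one hypothesis and Lemma \ref{discriminant formula}) that $\Delta(\varphi(x)-t)$ is squarefree, so by Corollary \ref{singletransposition} \emph{every} inertia group over a finite prime acts as a single transposition. Letting $I\subseteq G$ be the subgroup they generate, $K_1^I/k(t)$ is unramified at all finite primes; Lemma \ref{riemann hurwitz} then forces $K_1^I=k(t)$, so $G=I$ is transitive and generated by transpositions, whence $G=S_d$ by Lemma \ref{transitive subgroup}. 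Thus the paper uses many transpositions together with Lemma \ref{transitive subgroup} where you use one transposition together with Lemma \ref{primitive}; this completely sidesteps any decomposition analysis of $\varphi$. (Your passage to $k(a)$ is also unnecessary: the proof of Theorem \ref{sufficient} itself begins by replacing $k$ with $\bar k$.)

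Your indecomposability dichotomy fails as written. Suppose $\varphi=\psi_1\circ\psi_2$ with $\deg\psi_i\geq 2$, and the distinguished point $a$ has $e_{\psi_2}(a)=2$ while $\psi_2(a)$ is \emph{not} critical for $\psi_1$; then $a$ is a multiplicity-one critical point of $\varphi$ arising from $\psi_2$ alone. Take any critical point $c$ of $\psi_1$: each point of $\psi_2^{-1}(c)$ is critical for $\varphi$, and the multiplicity-one hypothesis forces every such point to be $\psi_2$-unramified, so there are $d_2\geq 2$ of them, none equal to $a$, all sharing the critical value $\psi_1(c)$. Nothing forces $\psi_1(c)=\varphi(a)$, so neither branch of your dichotomy contradicts the stated hypotheses. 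Your own parenthetical identifies the fix: one needs \emph{all} critical values distinct, and that is exactly what the paper's squarefree-discriminant step is using.
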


\begin{proof} By Theorem \ref{sufficient} we only need to show that $G=\Gal(\varphi(x)-t/k(t))\cong S_d$. Let $K_1$ be the splitting field of $\varphi(x)-t$ over $k(t)$. By Lemma \ref{discriminant formula} the discriminant of $\varphi(x)-t$ is squarefree so Corollary \ref{singletransposition} implies that for any ramified prime $\fq$ lying over a prime $\fp$, $I(\fq|\fp)$ consists of a single transposition. Let $I \subseteq G$ be the subgroup generated by $\{I(\fq|\fp): \fq|\fp, \fq\in \bP_{K_1}, \text{ and } \fp\in \bP_{k(t)}\setminus\{\fp_\infty\}\}$. Then $K_1^I$ is unramified over all primes of $k[t]$, and by Lemma \ref{riemann hurwitz}, $K_1^I=k(t)$. Thus, $G=I$. So $G$ is a transitive subgroup of $S_d$ generated by transpositions and Lemma \ref{transitive subgroup} implies $G \cong S_d$.
\end{proof}

Before we prove Theorem \ref{sufficient}, which gives conditions ensuring $\Gal(\varphi^N(x)-t/k(t))$ is isomorphic to $[S_d]^N$, we fix some notation and prove a lemma. For $n<N$, let $K_n$ be the splitting field of $\varphi^n(x)-t$ over $k(t)$, $\alpha_1,\dots,\alpha_{d^n}$ the roots of $\varphi^n(x)-t$, $M_i$ the splitting field of $\varphi(x)-\alpha_i$ over $k(\alpha_i)=k(\alpha_i,t)$, and $\widehat{M_i}:=K_n\prod_{j\neq i}M_j$. 

In order to work with discriminants as in Lemma \ref{discriminant formula} we need to make a few reductions.  First note that
for any extension $k'$ of $k$, we have $\Gal(K_N\cdot k' /
k'(t)) \subseteq \Gal(K_N/k(t))$, so it suffices to show that
$\Gal(K_N\cdot k' / k'(t)) \cong [S_d]^N$ for some extension $k'$ of
$k$.  Hence, we may assume that $k$ is algebraically closed.  Since
$k$ is then infinite, and a change of variables on $\varphi$ does not
affect $\Gal(K_N/k(t))$, we may assume that  if 
$m \leq N$, then $\varphi^m(a)$ is not the point at
infinity.
Furthermore, we may assume that every prime in $k[t]$ is of the form
$(z - t)$ for some $z \in k$.

\begin{lemma}\label{disjoint ramification} For $n < N$, the prime $(\varphi(a) - \alpha_i)$ of $k[\alpha_i]$ does not ramify in $\widehat{M_i}$.
\end{lemma}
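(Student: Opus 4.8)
My plan is to lift the ramification question for $\widehat{M_i}/k(\alpha_i)$ into the Galois extension $K_{n+1}/k(t)$, where the discriminant formula of Lemma~\ref{discriminant formula} applies, and then to use the disjointness hypothesis of Theorem~\ref{sufficient} to show that the relevant inertia group moves only roots of $\varphi^{n+1}(x)-t$ lying above $\alpha_i$ in the tree. I work under the reductions already in force ($k$ algebraically closed, $\varphi^m(a)\neq\infty$ for $m\leq N$, primes of $k[t]$ of the form $(z-t)$), together with one further harmless change of variables ensuring the numerators of $\varphi^n(x)-t$ for $n\leq N$ are monic, so that the roots in play are integral at the primes used below.

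First I would locate the primes. Since $t=\varphi^n(\alpha_i)$, the prime $\fP:=(\varphi(a)-\alpha_i)$ of $k[\alpha_i]$ lies over $\fp:=(\varphi^{n+1}(a)-t)$ of $k[t]$. By Lemma~\ref{discriminant formula} every prime of $k[t]$ ramifying in $K_n$ divides $\Delta(\varphi^n(x)-t)=\prod_{(\varphi^n)'(c)=0}(\varphi^n(c)-t)^{e(c\mid\varphi^n(c))}$, and by the chain rule each critical point $c$ of $\varphi^n$ satisfies $\varphi^n(c)=\varphi^m(b)$ for some critical point $b$ of $\varphi$ with $1\leq m\leq n$. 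As $n+1\leq N$ and $m<n+1$, disjointness gives $\varphi^{n+1}(a)\neq\varphi^m(b)$, hence $\fp\nmid\Delta(\varphi^n(x)-t)$ and $\fp$ is unramified in $K_n$. In particular $e(\fP\mid\fp)=1$, and reducing $\varphi^n(x)-t$ modulo $\fp$ the roots $\alpha_1,\dots,\alpha_{d^n}$ stay pairwise distinct.

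Next I would pin down the inertia in $K_{n+1}$. The reduction of $\varphi^{n+1}(x)-t$ modulo $\fp$ is $\varphi^{n+1}(x)-\varphi^{n+1}(a)$, whose multiple roots are exactly the critical points $c$ of $\varphi^{n+1}$ with $\varphi^{n+1}(c)=\varphi^{n+1}(a)$; the same chain-rule/disjointness argument (now with $1\leq m\leq n+1$) forces $c=a$, so $a$ is the only multiple root, say of multiplicity $e\geq 1$. Fixing a prime $\fq$ of $K_{n+1}$ over $\fP$, let $\gamma_1,\dots,\gamma_e$ be the roots of $\varphi^{n+1}(x)-t$ reducing to $a$ at $\fq$; every other root reduces to a simple root, hence is fixed by the inertia group $I:=I(\fq\mid\fp)$, while $I$ permutes $\{\gamma_1,\dots,\gamma_e\}$. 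Moreover each $\varphi(\gamma_k)$ is a root of $\varphi^n(x)-t$ reducing to $\varphi(a)$; since $\fq$ lies over $\fP$ the root $\alpha_i$ also reduces to $\varphi(a)$, and the $\alpha_j$ are distinct modulo $\fp$, so $\varphi(\gamma_1)=\dots=\varphi(\gamma_e)=\alpha_i$. Thus $I$ fixes every root of $\varphi^{n+1}(x)-t$ not lying above $\alpha_i$.

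To finish, $\widehat{M_i}=K_n\prod_{j\neq i}M_j$ is generated over $k(t)$ by the roots of $\varphi^{n+1}(x)-t$ lying above the $\alpha_j$ with $j\neq i$ (these already generate every $\alpha_j$, and hence $K_n$), none of which lies above $\alpha_i$; so $I$ fixes $\widehat{M_i}$ pointwise, i.e.\ $I\subseteq H:=\Gal(K_{n+1}/\widehat{M_i})$. Applying Lemma~\ref{inertia group} to $K_{n+1}/k(t)$ with intermediate field $\widehat{M_i}$, the prime $\fq\cap\widehat{M_i}$ corresponds to the $I$-orbit of the trivial coset of $G/H$, a single point since $I\subseteq H$, so $e(\fq\cap\widehat{M_i}\mid\fp)=1$; with $e(\fP\mid\fp)=1$ this gives $e(\fq\cap\widehat{M_i}\mid\fP)=1$, and since $\widehat{M_i}/k(\alpha_i)$ is Galois all primes of $\widehat{M_i}$ over $\fP$ are conjugate, hence unramified over $\fP$ — which is the claim. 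I expect the main obstacle to be the third step, converting the fact that $a$ is the only critical point of $\varphi^{n+1}$ over $\varphi^{n+1}(a)$ and that it lies over $\alpha_i$ into the statement that $I$ fixes all roots not above $\alpha_i$: both applications of the disjointness hypothesis are indispensable there — one to eliminate stray critical points of $\varphi^{n+1}$, the other (through $\fp$ unramified in $K_n$) to keep the level-$n$ roots from colliding modulo $\fp$, which is what forces all the $\gamma_k$ onto the single root $\alpha_i$. A smaller but genuine point is that $\fp$ itself generally \emph{does} ramify in $\widehat{M_i}$, so the final descent cannot be carried out at the level of $\fp$ and must be localized at $\fq\cap\widehat{M_i}$ — precisely what Lemma~\ref{inertia group} allows.
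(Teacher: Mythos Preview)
Your proof is correct and reaches the same conclusion as the paper, but by a somewhat different route. The paper proceeds in two discriminant computations: first it shows $(\varphi^{n+1}(a)-t)$ is unramified in $K_n$ via $\Delta(\varphi^n(x)-t)$ (exactly as you do), and then, for each $j\neq i$ separately, it shows that no prime of $K_n$ above $(\varphi(a)-\alpha_i)$ ramifies in $M_jK_n$ by computing $\Delta(\varphi(x)-\alpha_j)$ over $K_n$ and deriving a contradiction from a hypothetical common divisor with $(\varphi(a)-\alpha_i)$. You instead work globally in $K_{n+1}$: you identify $a$ as the unique multiple root of the reduction of $\varphi^{n+1}(x)-t$ modulo $\fp$, deduce that the inertia group at a prime $\fq$ over $\fP$ can only move level-$(n+1)$ roots sitting above $\alpha_i$ in the tree, and conclude that it fixes $\widehat{M_i}$ pointwise. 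Both arguments use the disjointness hypothesis at the same two places (to keep $\fp$ unramified in $K_n$, and to rule out stray critical collisions at level $n+1$); the paper's version stays purely at the level of discriminants, while yours makes the inertia action on the tree explicit and handles all $j\neq i$ at once.

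One minor remark: the extra change of variables you invoke to make the numerators monic is not really needed. The leading coefficient of the numerator of $\varphi^{n+1}(x)-t$ lies in $k[t]$ and can vanish only at $t=\varphi^{n+1}(\infty)$; after the reductions already in place one can equally well arrange $\varphi^{n+1}(\infty)\neq\varphi^{n+1}(a)$, so the roots are automatically integral at $\fp$ and your reduction-of-roots argument goes through as written.
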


\begin{proof}
	We will show that
	$(\varphi(a)-\alpha_i)$ does not ramify in $K_n$ and
	that the primes extending $(\varphi(a)-\alpha_i)$ in $K_n$ do not ramify in
	$M_jK_n$ if $i \neq j$.
	
	We have assumed that $\varphi^{n+1}(a)-t \neq \varphi^m(b)-t$ for any
	$m \leq n$ and any critical point $b\neq a$ of $\varphi$.  Thus, we see that $(\varphi^{n+1}(a) - t)$ does not ramify in $K_n$ since the only primes of $k(t)$ that ramify in $K_n$ must divide
	
	\[\Delta(\varphi^n(x)-t)=\prod_{b\in \varphi_\mathfrak{c}}\left((\varphi(b)-t)^{d^{n-1}}(\varphi^2(b)-t)^{d^{n-2}}\dots(\varphi^n(b)-t)\right)^{e(b|\varphi(b))-1}.\]

	Since $(\varphi(a)-\alpha_i)$ extends $(\varphi^{n+1}(a) - t)$
	in $k(\alpha_i)/k(t)$, it follows that $(\varphi(a)-\alpha_i)$ does not ramify in $K_n$.
	
	We can also see that that $(\varphi(a) - \alpha_i)$ does  not ramify in $M_jK_n$ for $j \neq i$ since the primes of $K_n$ ramifying in $M_jK_n$ are those
	dividing
	\[\Delta(\varphi(x)-\alpha_j)=\prod_{b\in \varphi_\mathfrak{c}}
	(\varphi(b)-\alpha_j)^{e(b|\varphi(b))-1}.\] 
	Suppose a
	prime $\fp$ of $K_n$ extending $(\varphi(a)-\alpha_i)$ in
	$K_n/k(\alpha_i)$ ramifies in $M_jK_n$. Then $\fp$ divides
	$\Delta(\varphi(x)-\alpha_j)$, so $\fp$ divides $(\varphi(b)-\alpha_j)$ for some critical point $b$ of $\varphi$.  Hence, $\fp$ divides $(\varphi(a) -
	\alpha_i)$ and $(\varphi(b) - \alpha_j)$.  Thus, $\fp$ divides $(\varphi^{n+1}(a) - t)$ and
	$(\varphi^{n+1}(b) - t)$, so we must have
	$\varphi^{n+1}(a) = \varphi^{n+1}(b)$ (since $\fp$ can extend exactly one prime in $K_n/k(t)$).  This means that $b = a$, since we assumed $\varphi^{n+1}(a) \neq \varphi^{n+1}(b)$ if $b\neq a$.  Thus, $\fp$ divides both $(\varphi(a)- \alpha_i)$ and $(\varphi(a) - \alpha_j)$. Then
	$\fp^2$ divides $\varphi^{n+1}(a)-t=\prod_{i=1}^{d^n}(\varphi(a)-\alpha_i)$. So the prime $\fp$ of $K_n$ ramifies over $(\varphi^{n+1}(a) - t)$, which is a contradiction.
\end{proof}

\begin{proof}[Proof of Theorem \ref{sufficient}] We use induction on $n$ to prove $\Gal(\varphi^n(x)-t/k(t))\cong [S_d]^n$ for all $n\leq N$.  The result holds in the case $n=1$ by hypothesis.
	
	Let $n<N$, and suppose $\Gal(\varphi^m(x)-t/k(t)) \cong [S_d]^n$, for all $m\leq n$. Let $\alpha_1,\ldots,\alpha_{d^n}$ be the distinct roots of $\varphi^n(x)-t$ as before. Then since $\alpha_i$ is transcendental over $k$, $\Gal(M_i/k(\alpha_i))\cong \Gal(\varphi(x)-t/k(t)) \cong S_d$ where $M_i$ is the splitting field of $\varphi(x)-\alpha_i$ over $k(\alpha_i)=k(\alpha_i,t)$.
	
	Let $K_{n+1}$ be the splitting field of $\varphi^{n+1}(x)-t$ over $k(t)$, so $K_{n+1}=\prod M_j$. To complete the proof it is enough to show that $\Gal(M_i/M_i\cap\widehat{M_i}) \cong S_d$ for each $i$, as then, $\Gal(K_{n+1}/\widehat{M_i})\cong S_d$ for each $i$, and this implies that $K_{n+1}$ has degree $(d!)^{d^n}$ over $K_n$ and $[K_{n+1}:K_n][K_n:k(t)]=(d!)^{d^n}|[S_d]^n|=|[S_d]^{n+1}|$. Since $\Gal(K_{n+1}/k(t))$ must be isomorphic to a subgroup of $[S_d]^{n+1}$, we have equality.
	
	Note, the extension $M_i\cap \widehat{M_i}/k(\alpha_i)$ is Galois, so $\Gamma:=\Gal(M_i/M_i\cap \widehat{M_i})$ is a normal subgroup of $\Gal(M_i/k(\alpha_i))\cong S_d$. So either $\Gamma \cong S_d$, or $\Gamma$ is isomorphic to a subgroup of $A_d$. Let $\fp$ be the prime $(\varphi(a)-\alpha_i)$ of $k[\alpha_i]$. If $p\neq2$, then $\fp||\Delta(\varphi(x)-\alpha_i)=\prod_{b\in \varphi_\mathfrak{c}}(\varphi(b)-\alpha_i)^{e(b|\varphi(b))-1}$. Thus, if $\fq$ is any prime of $M_i$ lying over $\fp$, then by Lemma \ref{singletransposition}, $I(\fq|\fp)$ consists of a single transposition. If $p=2$, then by hypothesis, $I(\fq|\fp)$ consists of a single transposition. Now fix a prime $\fq$ of $M_i$ lying over $\fp$, and let $\fp':=\fq\cap (M_i\cap\widehat{M_i})$.  By Lemma \ref{disjoint ramification}, we see that $\fp$ does not ramify in $\widehat{M_i}$ which implies $\fp'$ is unramified over $\fp$. Hence, $e(\fq|\fp')=e(\fq|\fp)=2$, which implies $I(\fq|\fp')$ also consists of a single transposition. Thus, $\Gamma$ contains a transposition, so $\Gamma\not\subseteq A_d$ and we have $\Gamma \cong S_d$ as desired.
\end{proof}

Next, we show that as long as $(d,p)\neq(2,2)$, the conditions in Theorem \ref{sufficient} are not too restrictive and in fact ``most'' polynomials satisfy the more restrictive conditions listed below. For the rest of this section let $k$ be algebraically closed.

\begin{definition} \label{defH} Define $H(d,N,k)$ to be the set of all $f(x) \in k[x]$ such that 
	\begin{enumerate}
		\item \label{2} $f'(x)$ is separable if $p \neq 2$ and $f'(x)$ is the square of a separable polynomial if $p = 2$,
		\item \label{3} if $w_1,\ldots,w_r$ are the distinct  critical points of $f(x)$ then $f^n(w_i)\neq f^m(w_j)$ for all $1\leq i,j, \leq r$ and $m,n\leq N$, unless $m=n$ and  $i=j$.
	\end{enumerate}
	
	If $p= 2$ or $p|d$, we impose further conditions;
	\begin{itemize}
		\item[(3)] \label{4} if $p=2$, whenever $b,c \in k, (x-b)^3$ does not divide $f(x)-c$ in $k[x]$, and
		\item[(4)] \label{5} if $p|d$,  $f(x)$ is indecomposable in $k(x)$ and $\deg f'(x) = d-2$.
	\end{itemize}
\end{definition}

\begin{remark} We could impose condition (3) and the indecomposability condition in any characteristic to get an appropriate Zariski-open set. However, they are unnecessary in the cases not listed above so we choose not to do so.
\end{remark}

\begin{lemma}\label{Zariski}  $H(d,N,k)$ is a nonempty Zariski-open subset of $\cP_d(k)$.
\end{lemma}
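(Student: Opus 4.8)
The plan is to show that $H(d,N,k)$ is open by exhibiting it as a finite intersection of Zariski-open subsets of $\cP_d(k)$, and to show it is nonempty by producing an explicit polynomial (or a dimension count) lying in the intersection. Recall that $\cP_d(k)$ is an open subset of $\bA^{d+1}(k)$, so it suffices to work with conditions cut out by the nonvanishing of polynomials in the coefficients $a_0,\dots,a_d$.

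First I would handle openness condition by condition. Condition \eqref{2}: the discriminant of $f'(x)$ (respectively, the discriminant of the radical of $f'(x)$, in the $p=2$ case where $f'$ is forced to be a perfect square of a degree-$(d-1)/2$ or similar polynomial) is a polynomial in the $a_i$; writing $f'(x)$ as a square in characteristic $2$ is automatic since $f'(x) = \sum i a_i x^{i-1}$ only has terms of even degree, and one checks it is the square of $\sum_{j} \sqrt{a_{2j+1}}\, x^{j}$ after noting that each odd coefficient is itself a square in the algebraically closed field $k$, so condition \eqref{2} in characteristic $2$ amounts to separability of that square root, again an open (discriminant-nonvanishing) condition. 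Condition \eqref{3}: the critical points $w_i$ are the roots of $f'(x)$, and $f^n(w_i) - f^m(w_j)$, symmetrized appropriately over the choices of roots, is (up to a power of the leading coefficient of $f'$) a resultant-type expression in the $a_i$; more carefully, $\prod_{i,j}(f^n(w_i) - f^m(w_j))$ taken over all roots of $f'$ is a symmetric function of those roots, hence a polynomial in the $a_i$, and it is nonzero on a Zariski-open set; condition \eqref{3} is the conjunction of finitely many such nonvanishing conditions for $0\le m\le n\le N$. The extra conditions: ``$(x-b)^3 \nmid f(x)-c$ for all $b,c$'' is equivalent to saying $f'(x)$ and $f''(x)$ have no common root, i.e., $\res(f',f'')\neq 0$, an open condition; ``$\deg f'(x) = d-2$'' (when $p\mid d$) just says the coefficient of $x^{d-2}$ in $f'$, namely $(d-1)a_{d-1}$, is nonzero — wait, one should be careful: when $p \mid d$ the leading term $d\, a_d x^{d-1}$ of $f'$ vanishes, so $\deg f' = d-2$ is the condition $(d-1)a_{d-1}\ne 0$, an open condition; and indecomposability of $f$ is open because decomposability $f = g\circ h$ forces, via Lemma \ref{indecomposable}, the Galois group of $f(x)-t$ over $k(t)$ to be imprimitive, and one can instead characterize indecomposability directly by a closed condition on coefficients (the variety of decomposable polynomials of degree $d$ is the image of a morphism from a product of affine spaces of strictly smaller total dimension, hence contained in a proper Zariski-closed subset). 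Assembling these: $H(d,N,k)$ is the intersection of $\cP_d(k)$ with finitely many sets of the form $\{P_\alpha \neq 0\}$, hence Zariski-open in $\cP_d(k)$.

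For nonemptiness, the cleanest route is a dimension argument: each forbidden locus is contained in a proper closed subset of the irreducible variety $\cP_d(k)$ (which has dimension $d+1$), and a finite union of proper closed subsets cannot be everything, so the complement is nonempty. The only points needing real checking are that each forbidden locus is \emph{proper}, i.e., that there exists at least one $f$ avoiding it: for \eqref{2} take any $f$ with distinct critical points (e.g.\ in odd characteristic a suitable $x^d + $ lower terms, in characteristic $2$ a polynomial whose derivative is the square of a separable polynomial, which exists since we may prescribe the lower-degree coefficients freely); for \eqref{3} one must know the ``generic'' polynomial has no coincidences among the forward orbits of its critical points — this is where I'd point to the fact that post-critically infinite behavior is the generic situation as long as $(d,p)\ne(2,2)$, and in the excluded case $(2,2)$ the unique critical point $a$ satisfies $f(x)-f(a) = (x-a)^2$, forcing $f$ post-critically finite, which is exactly why that case is excluded. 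The remaining conditions ($\res(f',f'')\ne 0$, $\deg f' = d-2$, indecomposability) are each visibly satisfiable by explicit examples.

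The main obstacle will be condition \eqref{3} — proving that the locus where the critical orbits collide is a proper closed subset, i.e., that \emph{some} degree-$d$ polynomial over the algebraically closed field $k$ has all the noncoincidence properties $f^n(w_i)\ne f^m(w_j)$. One expects this generically but making it rigorous requires either an explicit construction or a careful argument that the relevant polynomial in the coefficients (the product of the $f^n(w_i)-f^m(w_j)$) is not identically zero; the natural approach is to specialize to a one-parameter family and track the critical orbits, or to invoke the characteristic-zero case (Odoni) combined with a reduction/lifting argument, being careful about the characteristic-$p$ subtleties (especially $p\mid d$, where $\deg f' = d-2$ rather than $d-1$, changing the count of critical points) and about the genuinely exceptional pair $(d,p)=(2,2)$, which is deferred to Section \ref{2,2}.
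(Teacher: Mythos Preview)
Your openness argument is largely sound, but there is a real error in your treatment of the cube-free condition (condition~(3), imposed only when $p=2$). You claim it is equivalent to $\res(f',f'')\neq 0$. But in characteristic~$2$ the second derivative $f''$ vanishes identically: writing $f'(x)=\sum_{i\text{ odd}}a_i x^{i-1}$, every surviving exponent $i-1$ is even, so differentiating again annihilates each term. Thus your resultant is identically zero and the condition you wrote down is empty, not open and dense. The paper instead runs a dimension count: the locus of $(a_0,\dots,a_d)$ for which $\sum a_i x^i = s + (x-v)^3 g(x)$ for some $s,v$ and some $g$ of degree $d-3$ is parametrized by $d$ free variables $(s,v,u_0,\dots,u_{d-3})$, hence lies in a proper closed subvariety of $\bA^{d+1}$.

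For nonemptiness you correctly flag the critical-orbit noncoincidence (condition~(2)) as the crux, but you do not actually resolve it; appealing to ``post-critically infinite is generic'' is precisely the assertion to be proved. The paper's maneuver is a two-step construction. First, by explicit case analysis on $d$ modulo $p$ (and, when $p=2$, on $d$ modulo $4$), it exhibits a polynomial in $H_1$, i.e.\ one whose critical values $f(w_i)$ are already distinct. Second, given any $f\in H_1$, it considers the two-parameter family $f^*(x)=f(\mu x+\lambda)$, notes that $f^*\in H_1$ for every $(\lambda,\mu)$ with $\mu\neq 0$, and observes that each coincidence $(f^*)^n(w_i^*)=(f^*)^m(w_j^*)$ cuts out a one-dimensional subvariety of the $(\lambda,\mu)$-plane; since $k$ is infinite, a generic choice of $(\lambda,\mu)$ avoids all finitely many such curves, landing $f^*$ in $H_N$. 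This is the idea you are missing. One smaller point: your dimension count for the decomposable locus (``image of a strictly smaller-dimensional affine space'') breaks down when $d=4$, since then $e+f+1=ef+1$; the paper handles $d=4$ by a separate direct computation.
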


\begin{proof}
	Let $H_N$ be the set of all $f(x) \in \cP_d(k)$  satisfying (1) and (2) above and if $p|d$ satisfying $\deg f'(x) = d-2$. Let $x, y_1, y_2, \ldots,y_r,u_0,u_1,\ldots,u_d,v$ be algebraically independent variables over $k$, where $r=d-1$ if $p\neq2$ and $p\nmid d$, $r=d-2$ if $p\neq 2$ and $p|d$, $r=\frac{d-1}{2}$ if $p=2$ and $p\nmid d$, and $r=\frac{d-2}{2}$ if $p=2$ and $p|d$. Define
	\[F(x,u_0,\ldots,u_d)=\sum_{i=0}^d u_ix^i,\]
	\[G(x,v,y_1,\ldots,y_r)=v\prod_{i=1}^r (x-y_i).\]
	If $\sigma_0,\sigma_1,\ldots,\sigma_r$ are the elementary symmetric polynomials in $y_1,\ldots,y_r$ and we set $v_i=v\sigma_i$, then $u_0,u_1,\ldots,u_d, v_0,v_1,\ldots,v_r$ are algebraically independent over $k$. 
	Let $F^m(x)=F^m(x,u_0,\ldots.,u_d)$ be the $m$-th $x$-iterate of $F(x,u_0,\ldots,u_d)$, and let
	\[D=\prod\prod_{1\leq i< j \leq r}\prod\prod_{0\leq \ell, m \leq N} [F^\ell(y_i)-F^m(y_j)].  \]
	Then $D$ is expressible as a polynomial in $u_0,\ldots,u_d,v_0,\ldots,v_r$. If $p\neq 2$, we specialize $G(x)$ to $F'(x)$, that is, specialize the $v_i$ so that $\sum_j ju_jx^{j-1}=\sum_i v_{r-i}x^i$. If $p=2$, then we let $D$ as above but specialize the $v_j$ so that $\sum_j  ju_jx^{j-1}=(\sum v_{r-i}x^i)^2=\sum v_{r-i}^2x^{2i}$. In either case, $D$ specializes to a polynomial $h(u_0,\ldots,u_d)$ in the ring $k[u_1,\ldots,u_d]$. It is clear that if $h(a_0,\ldots,a_d)\neq 0$ then $f(x)=\sum_i a_ix^i \in H_N$ so $H_N$ is a Zariski-open set in $\cP_d(k)$.

	We now show that $H_N$ is nonempty. Let $H_1$ be the set of all $f(x)\in \cP_d(k)$ satisfying 
	\begin{itemize}
		\item if $p|d$, $\deg f'(x)= d-2$,
		\item $f'(x)$ is separable if $p \neq 2$ and $f'(x)$ is the square of a separable polynomial if $p = 2$, and
		\item if $w_1,\ldots,w_r$ are the distinct critical points of $f(x)$ then $f(w_i)\neq f(w_j)$ unless $i=j$.
		
	\end{itemize}
	
	We first show that $H_1$ is nonempty, to do so we consider many different cases. 
	If $p=0$, $p \nmid d(d-1)$, or $p=2$ and $d \equiv 3 \mod 4$, then any $f(x)$ of the form $f(x)=a_dx^d+a_1x+a_0$ with $a_0a_1a_d \neq 0$, belongs to $H_1$. In the case, $p>2$ and $p|d$, we have, $d \geq p \geq 3$ and $p\nmid d-2$. If $f(x)=a_dx^d+a_{d-1}x^{d-1}+a_1x+a_0$ with $a_0a_1a_{d-1}a_d \neq 0$, then $f(x) \in H_1$.
	If $p>2$ and $p|d-1$, then $d\geq p+1\geq 4$, and  any $f(x)=a_dx^d+a_2x^2+a_0$ with $a_0a_2a_d \neq 0$, belongs to $H_1$.
	
	Finally, we consider the cases where $p=2$ and $d\not\equiv 3 \mod 4$. If $d\equiv 0\mod 4$ and $f(x) =a_dx^d+a_{d-1}x^{d-1}+a_1x+a_0$ with $a_0a_1a_{d-1}a_d \neq 0$, then $f(x)\in H_1$. If $d \equiv 1 \mod 4$ then $f(x)=a_dx^d+a_3x^3+a_0$ with $a_0a_3a_d\neq 0$ will lie in $H_1$. If $d\equiv 2 \mod 4$, $f(x)\in H_1$ if $f(x)=a_dx^d+a_{d-1}x^{d-1}+a_3x^3+a_0$ with $a_0a_3a_{d-1}a_d \neq 0$.
	
	Now we will show that $H_N$ is nonempty by showing there is some $f(x) \in H_1$ such that  $f(x)$ satisfies condition (2). Let $f(x)\in H_1$, and let $\lambda , \mu \in k$ with $\mu \neq 0$.  It is easy to see that $f^*(x)=f(\mu x+\lambda)$ also lies in $H_1$,  since if $w_i$ is a critical point of $f$ then $\frac{w_i-\lambda}{\mu}$ is a critical point of $f^*$ and $f^*\left(\frac{w_i-\lambda}{\mu}\right)=f(w_i)$.  Now note, for each $i,j,n,m$ with $i\neq j$ and $m\neq n$ the set of $(\lambda, \mu)\in \bA^2$ such that $(f^*)^n\left(\frac{w_i-\lambda}{\mu}\right)=(f^*)^m\left(\frac{w_j-\lambda}{\mu}\right)$ is a dimension one subvariety of $\bA^2$. Thus, since $k$ is infinite, we may choose $(\lambda ,\mu)\in \bA^2(k)$ so that  $(f^*)^n\left(\frac{w_i-\lambda}{\mu}\right)\neq (f^*)^m\left(\frac{w_j-\lambda}{\mu}\right)$,  for all $1\leq i,j \leq r$ and  $m,n\leq N$, unless $m=n$ and $i=j$, which implies $f^*(x) \in H_N$, and $H_N$ is nonempty.
	
	Now, if $p=2$, consider the set of $f(x) \in \cP_d(k)$ satisfying condition (3). Let $s,v,u_0,\ldots,u_{d-3},y_0,\ldots,y_d,x$ be algebraically independent variables over $k$, and define $\pi_0,\ldots,\pi_d \in k[s,v,u_0,\ldots,u_{d-3}]$ so that 
	\[\sum_{j=1}^d \pi_jx^j=s+(x-v)^3(u_0+\ldots+u_{d-3}x^{d-3}).\]
	
	Working in the ring $R:=k[s,v,u_0,\ldots u_{d-3},y_0,\ldots,y_d]$, let $\fP$ be the ideal generated by $y_0-\pi_0,\ldots,y_d-\pi_d$. Clearly, $R/\fP \cong k[s,v,u_0,\ldots,u_{d-3}]$, so $\fP$ is a prime ideal. Then $\fp=\fP\cap k[y_0,\ldots,y_d]$ is prime in $k[y_0,\ldots,y_d]$. Moreover, the transcendence degree of $k[y_0,\ldots,y_d]/\fp$ over $k$ does not exceed $d$ since $k[y_0,\ldots,y_d]/\fp \subseteq R/\fP$. Let $\cV$ be the variety in $\bA_k^{d+1}$ corresponding to $\fp$. Then $\cV$ is Zariski closed and not equal to $\bA_k^{d+1}$. It is clear that if $f(x)=a_dx^d+\ldots+a_0$ fails to satisfy the third property then $(a_0,\ldots,a_d) \in \cV$. Thus, the set $\cV^c$ is a nonempty Zariski-open set on which condition (3) holds.
	
	Finally, suppose $p|d$, it remains to show that the set of indecomposable polynomials with degree $d$ contains a nonempty Zariski-open set. It suffices to show that for each ordered pair $(e,f)\in \bN^2$ with $e,f\geq 2$ and $ef=d$, the set of polynomials in $\cP_d(k)$ that can be expressed as $g(h(x))$ in $k[x]$ with $\deg g = e$ and $\deg h= f$ is contained in a proper Zariski closed set. If $d$ is prime the result is trivial. 
	
	First, we assume $d\geq 6$ leaving the case $d=4$ until later. Note that whenever $f(x)=g(h(x))$ we can adjust $g(x)$ and $h(x)$ so that $h(x)$ is monic. Let  $x,y_0,\ldots,y_d,s_0,\ldots,s_e,t_0,\ldots,t_{f-1}$ be algebraically independent variables over $k$ and define $\pi_0,\ldots,\pi_d \in k[s_0,\ldots,s_e,t_0,\ldots,t_{f-1}]$ so that
	\[\sum_{j=0}^d \pi_j x^j=\sum_{i=0}^e s_i \left(x^f+\sum_{\ell=0}^{e-1} t_\ell x^\ell\right)^i.\]
	
	Let $\fP$ be the ideal in $R:=k[y_o,\ldots,y_d,s_0,\ldots,s_e,t_0,\ldots,t_{f-1}]$ generated by $y_0-\pi_0,\ldots,y_d-\pi_d$. Then $R/\fP\cong k[s_0,\ldots,s_e,t_0,\ldots,t_{f-1}]$, so $\fP$ is a prime ideal. Then $\fp=\fP\cap k[y_0,\ldots,y_d]$ is prime in $k[y_0,\ldots,y_d]$ and the transcendence degree of $k[y_0,\ldots,y_d]/\fp$ over $k$ is less that or equal to  $e+f+1$. Let $\cW$ be the variety in $\bA_k^{d+1}$ corresponding to $\fp$. Then $\dim \cW \leq e+f+1<ef+1=d+1$ since $d>4$. Thus, $\cW$ is a proper, Zariski closed subset of $\bA_k^{d+1}$ and clearly, if $f(x)=a_dx^d+\ldots+a_0$ is decomposable as $g(h(x))$ with $\deg g =e$ and $\deg h=f$, then $(a_0,\ldots,a_d)\in \cW$.
	
	Now consider the case $d=4$ and $e=f=2$. Note, if $\ch k=2$ and $a_4a_3\neq 0$, then it is easy to see that $f(x)=a_4x^4+\ldots+a_0$ is indecomposable. If $\ch k\neq 2$ and $f$ is decomposable, then by completing the square we can write $f(x)=g((x-c)^2)$ with $c\in k$ and $\deg g =2$. Then $f(c+x)=f(c-x)$. So if we write $f(x)=a_4x^4+a_3x^3+a_2x^2+a_1x+a_0$, expand $f(c+x)-f(c-x)=0$, and examine the coefficients we see that 
	\[4a_4c+a_3=0=4a_4c^3+3a_3c^2+2a_2c+a_1.\]
	Then since $a_4\neq 0$, we must have $c=-a_3/4a_4$, so that \[16a_1a_4^2-8a_2a_3a_4+3a_3^3a_4-a_3^3=0.\] Clearly this does not hold for all $f(x)\in\cP_4(k)$, and this completes the proof.
	
\end{proof}

\begin{theorem} \label{galois group} If $f(x)\in H(d,N,k)$, then $\Gal(f^N(x)-t/k(t)) \cong [S_d]^N$.
\end{theorem}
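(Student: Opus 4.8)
The plan is to deduce Theorem \ref{galois group} from Theorem \ref{sufficient} by verifying that every $f \in H(d,N,k)$ satisfies the hypotheses of that theorem. Since $k$ is algebraically closed, the critical points of $f$ all lie in $k$, and condition (1) in Definition \ref{defH} guarantees that each critical point has multiplicity one when $p \neq 2$ (and that $f'$ has the expected degree, together with the $p \mid d$ condition, so that there are exactly $r$ critical points counted correctly). Condition (2) is precisely the orbit-disjointness hypothesis ``$\varphi^n(a) \neq \varphi^m(b)$ for $m \leq n \leq N$ unless $m = n$, $b = a$'' from Theorem \ref{sufficient}, applied with $a$ any chosen critical point. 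So the only substantive thing left to check is the base case: $\Gal(f(x) - t/k(t)) \cong S_d$, and, in characteristic $2$, the extra inertia condition that $I(\fq|\fp)$ is a single transposition for $\fp = k(t) \cap (f(a) - t)$.

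First I would handle the case $p \neq 2$ and $p \nmid d$. Here $f'(x)$ is separable of degree $d - 1$, so by Lemma \ref{discriminant formula} the discriminant $\Delta(f(x) - t) = \prod_{b \in f_{\mathfrak c}}(f(b) - t)^{e(b \mid f(b))}$ is squarefree as a polynomial in $t$ (the $f(b)$ are distinct by condition (2) with $m = n = 1$, and each exponent $e(b \mid f(b)) = 2$ since multiplicity one and $p \nmid 2$; so each linear factor $(f(b)-t)$ appears to the first power in the radical). Then for each ramified prime $\fp \neq \fp_\infty$ with $\fp \parallel \Delta$, Corollary \ref{singletransposition} says the inertia group acts as a single transposition. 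Letting $I$ be the subgroup of $G = \Gal(f(x)-t/k(t))$ generated by all these inertia groups, the fixed field $K_1^I$ is unramified away from $\fp_\infty$; but one also needs it tamely ramified at $\fp_\infty$ to invoke Lemma \ref{riemann hurwitz}. Since $p \nmid d$, the ramification at infinity is tame (total ramification of index $d$ coprime to $p$), so Lemma \ref{riemann hurwitz} forces $K_1^I = k(t)$, hence $G = I$ is transitive (irreducibility of $f(x)-t$) and generated by transpositions, so $G = S_d$ by Lemma \ref{transitive subgroup}.

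The remaining cases $p \mid d$ and $p = 2$ are where the extra conditions (3) and (4) in Definition \ref{defH} do their work, and this is the main obstacle. When $p \mid d$ the ramification at infinity is wild, so the Riemann--Hurwitz argument above breaks; instead I would use that $f$ is indecomposable (condition (4)), so by Lemma \ref{indecomposable} $G$ is primitive, and combine this with the existence of a transposition in some inertia group (from $\deg f' = d-2$, which makes some critical value have $\fp \parallel \Delta$, applying Corollary \ref{singletransposition}) and Lemma \ref{primitive} to conclude $G = S_d$. When $p = 2$, $f'$ is a square, so Corollary \ref{singletransposition} does not apply directly; condition (3) ensures no $(x-b)^3 \mid f(x) - c$, which controls the ramification so that Corollary \ref{transposition or three cycle} applies to give inertia generators that are single transpositions or $3$-cycles at primes $\fp^2 \parallel \Delta$, and one argues these generate $S_d$ (using primitivity when $p = 2$ and $p \mid d$, i.e. $d \equiv 0 \bmod 4$, via indecomposability again, and a Riemann--Hurwitz / transitivity argument when $p \nmid d$). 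In this case one must also verify the characteristic-$2$ inertia hypothesis of Theorem \ref{sufficient}: that at $\fp = k(t) \cap (f(a) - t)$ for a well-chosen critical point $a$, the local behavior is a single transposition rather than a $3$-cycle — this follows from condition (3), since $(x - a)^3 \nmid f(x) - f(a)$ forces $\fp \parallel \Delta$ rather than $\fp^2 \parallel \Delta$ at that particular critical value, which by Corollary \ref{singletransposition}'s argument (adapted) gives the transposition. Once the base case $n = 1$ is established in all cases, Theorem \ref{sufficient} applies verbatim and yields $\Gal(f^N(x) - t/k(t)) \cong [S_d]^N$.
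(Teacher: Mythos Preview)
Your overall strategy and the $p\neq 2$ cases are correct and match the paper's proof: verify the hypotheses of Theorem~\ref{sufficient}, with the main work being the base case $\Gal(f(x)-t/k(t))\cong S_d$, handled by Riemann--Hurwitz when $p\nmid d$ and by indecomposability/primitivity when $p\mid d$.

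The characteristic-$2$ argument, however, contains a genuine error. Your claim that condition~(3) ``forces $\fp\parallel\Delta$ rather than $\fp^2\parallel\Delta$'' is false. Condition~(1) says $f'$ is the \emph{square} of a separable polynomial, so every critical point $w_i$ is a double zero of $f'$, and hence $\Delta(f(x)-t)=\prod_i(f(w_i)-t)^2$ regardless of condition~(3). You always have $\fp^2\parallel\Delta$ in characteristic $2$; Corollary~\ref{singletransposition} is simply unavailable. Condition~(3) plays a different role: from $\fp^2\parallel\Delta$, Corollary~\ref{transposition or three cycle} says $I(\fq|\fp)$ is a single transposition \emph{or} a single $3$-cycle; condition~(3), namely $(x-b)^3\nmid f(x)-c$, means the reduction of $f(x)-t$ modulo $\fp=(f(w_i)-t)$ is cube-free, so (by the Kummer--Dedekind relation between the reduction and the splitting of $\fp$ in $k[\alpha]$) no prime above $\fp$ can have ramification index $3$. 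This rules out the $3$-cycle, leaving a transposition. That is exactly what the paper does.

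This also repairs your base-case sketch in characteristic $2$: without first eliminating the $3$-cycles, ``one argues these generate $S_d$'' does not work. A primitive group containing a $3$-cycle is only guaranteed to contain $A_d$, and a transitive group generated by transpositions and $3$-cycles need not be $S_d$; you need at least one genuine transposition to invoke Lemma~\ref{primitive} or Lemma~\ref{transitive subgroup}. Once condition~(3) forces every finite inertia group to act as a transposition, the base case proceeds exactly as in your $p\neq 2$ analysis (Riemann--Hurwitz when $p\nmid d$, primitivity when $p\mid d$). Minor point: ``$p=2$ and $p\mid d$'' means $d$ even, not $d\equiv 0\bmod 4$.
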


\begin{proof} We will show that $f(x)$ satisfies the hypotheses of Theorem \ref{sufficient} which gives the desired result. First note that for any critical point $a$ of $f(x)$, $f^n(a)\neq f^n(b)$ for all $m\leq n\leq N$, unless $m=n$ and $b=a$ by definition. Also, if $p\neq 2$ then each critical point has multiplicity one.
	
	If $p=2$, then $\Delta(f(x)-t)=\prod_{i=1}^r (f(w_i)-t)^2$. So Corollary \ref{transposition or three cycle} implies that $I(\fq|\fp)$ consists of a transposition or a three cycle for any ramified prime $\fp=(f(w_i)-t)$ and any prime $\fq$ of $K_1$ lying over $\fp$.  Recall, $K_1$ is defined to be the splitting field of $f(x)-t$ over $k(t)$. Now, condition (3) in the definition of $H(d,N,k)$ implies that the reduction of $f(x)-t \mod \fp$ is cube free. Which, by Kummer's Theorem (see \cite[Theorem 7.4]{Jan}, for example), implies that we cannot have $\fp k[\alpha]= \fP_1^3\fP_2\ldots\fP_m$, where $\alpha$ is a root of $f(x)-t$. So $I(\fq|\fp)$ cannot consist of a three cycle and we must have $I(\fq|\fp)$ consists of a single transposition.

	It remains to show $\Gal(f(x)-t/k(t))\cong S_d$. Note, for any ramified prime $\fp$ we now have $I(\fq|\fp)$ consists of a single transposition. First, we consider the case $p\nmid d$. Let $I \subseteq G$ be the subgroup generated by $\{I(\fq|\fp): \fq|\fp, \fq\in \bP_{K_1}, \text{ and } \fp\in \bP_{k(t)}\setminus\{\fp_\infty\}\}$. Then $K_1^I$ is unramified over all primes of $k[t]$, so by Lemma \ref{riemann hurwitz}, $K_1^I=k[t]$. Thus, $G=I$. So $G$ is a transitive subgroup of $S_d$ generated by transpositions and Lemma \ref{transitive subgroup} implies $G \cong S_d$.

	If $p|d$, then property (4) in the definition of $H(d,N,k)$ guarantees that $f(x)$, and hence $f(x)-t$ is indecomposable. Then since $G$ contains a transposition, Lemma  \ref{indecomposable} and Lemma \ref{primitive} imply $G\cong S_d$, as desired.
\end{proof}


\section{Generic Polynomials and Generic Rational Functions} \label{Generic}

In this section we prove Theorem \ref{generic} for $(d,p)\neq(2,2)$; we handle the case $d=p=2$ in Section \ref{2,2}.  First we give a lemma, which can be found in \cite{odoni}.

\begin{lemma}[\cite{odoni}, Lemma 2.4] \label{specialization}  Let $A$  be an integrally closed domain with field of fractions $K$, let $K'$ be any field, and let $\psi:A\lra K'$ be a ring homomorphism. Define $\widetilde{\psi} :A[x] \lra K'[x]$ by $a_dx^d+a_{d-1}x^{d-1}+\dots+a_0 \mapsto \psi(a_d)x^d+\psi(a_{d-1})x^{d-1}+\dots+\psi(a_0)$. If $f(x)=a_dx^d+\dots+a_0$ is a polynomial in $A[x]$ with $d\geq 1$, $a_d \neq 0$, and $a_d \notin \ker(\psi)$, such that $\widetilde{\psi}(f(x))$ is separable over $K'$ then $f(x)$ is separable over $K$ and $\Gal(\widetilde{\psi}(f(x))/K')$ is isomorphic to a subgroup of $\Gal(f(x)/K)$. 
\end{lemma}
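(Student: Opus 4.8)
The plan is to run the classical ``reduction modulo a prime'' argument for Galois groups of monic polynomials, after a sequence of harmless reductions, and then to invoke the standard structure theory of decomposition groups (the same circle of ideas already used in Lemma~\ref{inertia group}). First I would make three reductions. Since $A/\ker\psi$ embeds into the field $K'$, the ideal $\fp:=\ker\psi$ is prime, and because $a_d\notin\fp$ the map $\psi$ extends to $\psi\colon A_\fp\lra K'$ via $a/s\mapsto\psi(a)/\psi(s)$; here $A_\fp$ is again integrally closed with fraction field $K$, and $\fm:=\fp A_\fp=\ker(\psi\colon A_\fp\to K')$ is its maximal ideal. Replacing $A$ by $A_\fp$ changes neither $K$ nor the polynomials $f$ and $\widetilde{\psi}(f)$, so I may assume $A$ is local with maximal ideal $\fm=\ker\psi$. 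Now $a_d$ is a unit of $A$, so $a_d^{-1}f(x)\in A[x]$ is monic with the same splitting field and the same separability over $K$, while $\widetilde{\psi}(a_d^{-1}f)=\psi(a_d)^{-1}\widetilde{\psi}(f)$ has the same splitting field and the same separability over $K'$; since scaling a polynomial by a nonzero constant does not affect its Galois group, I may assume $f$ is monic of degree $d\geq 2$ (the case $d=1$ is trivial). Finally, setting $k_0:=A/\fm=\psi(A)\subseteq K'$, the polynomial $\widetilde{\psi}(f)\in k_0[x]$ is separable over $k_0$, and $\Gal(\widetilde{\psi}(f)/K')$ embeds into $\Gal(\widetilde{\psi}(f)/k_0)$ by restriction to the splitting field over $k_0$. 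Hence it suffices to treat $K'=k_0=A/\fm$ with $\psi$ the quotient map, so that $\widetilde{\psi}(f)=\bar f$ is literally the reduction of $f$ modulo $\fm$.

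Next comes the discriminant step. The discriminant $\Delta(f)$ is a $\bZ$-polynomial in the coefficients of $f$, hence lies in $A$, and its image in $k_0$ is $\Delta(\bar f)$. Since $\bar f$ is separable, $\Delta(\bar f)\neq 0$, so $\Delta(f)\notin\fm$; in particular $\Delta(f)\neq 0$, which already proves that $f$ is separable over $K$. Thus the splitting field $L$ of $f$ over $K$ is Galois with group $G:=\Gal(L/K)$; writing $\alpha_1,\dots,\alpha_d\in L$ for the roots of $f$, each is integral over $A$. Let $B$ be the integral closure of $A$ in $L$ and choose a maximal ideal $\fq$ of $B$ lying over $\fm$. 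Reducing modulo $\fq$ and using $\Delta(f)=\pm\prod_{i<j}(\alpha_i-\alpha_j)^2$ together with $\overline{\Delta(f)}=\Delta(\bar f)\neq 0$ in $B/\fq$, I get $\bar\alpha_i\neq\bar\alpha_j$ for $i\neq j$. So $\bar f=\prod_i(x-\bar\alpha_i)$ has $d$ distinct roots in $B/\fq$, and $L':=k_0(\bar\alpha_1,\dots,\bar\alpha_d)\subseteq B/\fq$ is the splitting field of $\widetilde{\psi}(f)=\bar f$ over $K'=k_0$; in particular $\Gal(\widetilde{\psi}(f)/K')=\Gal(L'/K')$.

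Finally I would produce the embedding. Let $D:=D(\fq|\fm)\subseteq G$ be the decomposition group; each $\sigma\in D$ induces an automorphism $\bar\sigma$ of $B/\fq$ fixing $k_0$, and since $\bar\sigma$ permutes the roots $\bar\alpha_i$ of $\bar f$ it restricts to $\rho(\sigma)\in\Gal(L'/k_0)$, giving a homomorphism $\rho\colon D\to\Gal(L'/k_0)$. This $\rho$ is injective: if $\rho(\sigma)=\mathrm{id}$ then $\overline{\sigma(\alpha_i)}=\bar\sigma(\bar\alpha_i)=\bar\alpha_i$ for all $i$, and since $\sigma(\alpha_i)$ is some $\alpha_j$ while the $\bar\alpha_i$ are distinct, this forces $\sigma(\alpha_i)=\alpha_i$ for all $i$, hence $\sigma=\mathrm{id}$ because $L=K(\alpha_1,\dots,\alpha_d)$. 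And $\rho$ is surjective by the standard theory of decomposition groups: $D$ surjects onto $\Aut((B/\fq)/k_0)$, the residue extension $(B/\fq)/k_0$ is normal, and hence every element of $\Gal(L'/k_0)$ extends to an automorphism of $B/\fq$ (see \cite{Jan} or \cite{Lang}, as used in Lemma~\ref{inertia group}). Therefore $\rho$ is an isomorphism, and $\Gal(\widetilde{\psi}(f)/K')=\Gal(L'/K')\cong D\leq G=\Gal(f/K)$.

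I do not expect a deep obstacle here; the work is entirely in getting the reductions right --- extending $\psi$ to the localization, scaling to a monic polynomial without altering any Galois group, and replacing $K'$ by the subfield $A/\ker\psi$ --- and in locating the decomposition-group surjectivity and residue-normality statements in the generality of an arbitrary integrally closed domain. The one place where one genuinely must compute rather than cite is the discriminant argument, which simultaneously yields separability of $f$ over $K$ and distinctness of the reduced roots $\bar\alpha_i$; the latter is exactly what makes $\rho$ injective and identifies $B/\fq\supseteq L'$ with the splitting field of $\widetilde{\psi}(f)$.
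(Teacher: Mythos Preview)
The paper does not supply its own proof of this lemma; it is quoted from \cite[Lemma~2.4]{odoni} and used as a black box. Your argument is the classical reduction-modulo-a-prime proof via decomposition groups and is correct.

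Two small remarks. First, the pointer to Lemma~\ref{inertia group} is slightly misplaced: that lemma computes orbit lengths under $D(\fq|\fp)$ and $I(\fq|\fp)$ but does not establish the surjectivity $D\twoheadrightarrow\Aut((B/\fq)/k_0)$ or the normality of the residue extension. The correct reference is the general decomposition-group theorem for integrally closed domains (e.g.\ \cite[Ch.~VII, \S2]{Lang}). Second, the clause ``every element of $\Gal(L'/k_0)$ extends to an automorphism of $B/\fq$'' is the one step that deserves an extra line, since a priori $L'\subsetneq B/\fq$ is possible. The cleanest way to close it is to note that the injectivity of $\rho$, combined with the surjection $D\twoheadrightarrow\Aut((B/\fq)/k_0)$, forces $L'$ to be the separable closure of $k_0$ inside $B/\fq$ (otherwise some nontrivial $\bar\sigma\in\Aut((B/\fq)/k_0)$ would restrict to the identity on $L'$, and any lift $\sigma\in D$ would contradict injectivity); hence restriction $\Aut((B/\fq)/k_0)\to\Gal(L'/k_0)$ is a bijection and $\rho$ is surjective. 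With these clarifications your proof is complete.
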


\begin{proof}[Proof of Theorem \ref{generic}] We now prove Theorem \ref{generic} in the case $(d,p)\neq(2,2)$. The case $(d,p)=(2,2)$ is handled in Section \ref{2,2}, Corollary \ref{generic 2,2}. 
	
	The result follows almost immediately from Theorem \ref{galois group}.  Let $k$ be any field (not necessarily algebraically closed). Let $f(x) \in H(d,n,\overline{k})$. If $b \in \overline{k}$ then it is easy to see that $f^*(x)=b^{-1}f(bx)\in H(d,n,\overline{k})$ so without loss of generality we can assume $f(x)$ is monic.  By Lemma \ref{galois group},  $\Gal(f^n(x)-t/\overline{k}(t)) \cong[S_d]^n$. Now define $g(x):=f(x+t)-t$. Then, $g^n(x)=f^n(x+t)-t$, so $\Gal(g^n(x)/\overline{k}(t)) \cong \Gal(f^n(x)-t/\overline{k}(t)) \cong[S_d]^n$. 
	
	Now consider the maps $\psi_1:\overline{k}[\textbf{s}]\lra \overline{k}(t)$ and $\psi_2:\overline{k}[\textbf{s,u}] \lra \overline{k}(t)$, given by mapping $s_i$ to the $i$-th coefficient of $g(x)$ and mapping $u_0$ to $1$ and $u_i$ to $0$ for $i\neq 0$. We can extend $\psi_1$ and $\psi_2$ to $\widetilde{\psi_1}:\overline{k}[\textbf{s}][x]\lra \overline{k}(t)[x]$ and $\widetilde{\psi_2}:\overline{k}[\textbf{s,u}][x] \lra \overline{k}(t)[x]$ in the natural way. Let $P_n(x)$ be the numerator of $\Phi^n(x)$ then $\widetilde{\psi_1}(\fG^n(x)) =\widetilde{\psi_2}(P_n(x))= g^n(x)$, so Lemma \ref{specialization} implies $\Gal(\fG^n(x)/\overline{k}(\textbf{s})) \supseteq [S_d]^n$ and 
	$\Gal(\Phi^n(x)/\overline{k}(\textbf{s,u})) \supseteq [S_d]^n$. On the other hand, by Corollary \ref{iterated wreath}, $\Gal(\fG^n(x)/{k}(\textbf{s})) \subseteq [S_d]^n$ and 
	$\Gal(\Phi^n(x)/k(\textbf{s,u})) \subseteq [S_d]^n$. Thus, we have $[S_d]^n\subseteq \Gal(\fG^n(x)/\overline{k}(\textbf{s}))\subseteq \Gal(\fG^n(x)/{k}(\textbf{s}))\subseteq [S_d]^n$ and $[S_d]^n\subseteq \Gal(\Phi^n(x)/\overline{k}(\textbf{s,u}))\subseteq \Gal(\Phi^n(x)/{k}(\textbf{s,u}))\subseteq [S_d]^n$. Hence, it follows that $\Gal(\fG^n(x)/k(\textbf{s}))\cong [S_d]^n$ and $\Gal(\Phi^n(x)/k(\textbf{s,u}))\cong [S_d]^n$ as desired.
\end{proof}

Let $k$ be any field, let $t_1,\ldots t_r, x_1,\ldots,x_n$ be independent indeterminants over $k$, let $f_1(\textbf{x},\textbf{t}), \ldots f_m(\textbf{x},\textbf{t})$ be irreducible  polynomials in $\textbf{x}$ with coefficients in $k(\textbf{t})$, and let $g(\textbf{t})\in k[\textbf{t}]$ be a nonzero polynomial. We define the subset $\cH_k(f_1,...,f_m;g)$ of $k^r$ to be the set of all $\textbf{a}=(a_1,\ldots,a_r)\in k^r$ such that $f_i(\textbf{a},\textbf{x})$ is irreducible for all $i$ and $g(\textbf{a})\neq 0$. A \textit{Hilbert subset} of $k^r$ is any subset of this form. 

If every Hilbert subset of $k^r$ is nonempty for every integer $r\geq 1$, then we say that $k$ is a \textit{Hilbertian field}. In a Hilbertian field any Hilbert subset of $k^r$ is Zariski dense in $k^r$ (see \cite{FJ}, for example).

The following is a generalization of \cite[Lemma 6.1]{odoni}.

\begin{lemma} \label{hilbertian} Let $k$ be a Hilbertian field, and let $t_1,\ldots,t_r, x$ be independent indeterminants over $k$. Suppose that in $k[\textbf{t},x]$,  $f(\textbf{t},x)$ is $x$-monic, irreducible, and separable. Then, there is a Hilbert subset $\cH$ of $k^r$ such that for all $\textbf{t'} \in \cH$, $f(\textbf{t'},x)$ is monic, irreducible, and separable in $k[x]$ and $\Gal(f(\textbf{t'},x)/k)\cong \Gal(f(\textbf{t},x)/k(\textbf{t}))$.
\end{lemma}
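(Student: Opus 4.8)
The plan is to reduce the multivariate statement to a single application of the Hilbert irreducibility property of $k$, being careful to track separability and the Galois group simultaneously. Let $M$ be the splitting field of $f(\mathbf{t},x)$ over $k(\mathbf{t})$, let $G = \Gal(f(\mathbf{t},x)/k(\mathbf{t}))$, and let $\alpha_1,\dots,\alpha_d$ be the roots of $f(\mathbf{t},x)$ in $M$, so $M = k(\mathbf{t})(\alpha_1,\dots,\alpha_d)$. Since $f$ is $x$-separable, the extension $M/k(\mathbf{t})$ is finite separable; by the primitive element theorem write $M = k(\mathbf{t})(\theta)$ for some $\theta$ that is a polynomial in the $\alpha_i$, and let $g(\mathbf{t},x) \in k(\mathbf{t})[x]$ be the minimal polynomial of $\theta$ over $k(\mathbf{t})$. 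After clearing denominators and rescaling we may take $g$ to be $x$-monic with coefficients in $k[\mathbf{t}]$ (possibly at the cost of inverting a nonzero $g_0(\mathbf{t}) \in k[\mathbf{t}]$), and $g$ is irreducible and separable over $k(\mathbf{t})$.

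**Key steps.** First I would form the Hilbert set cutting out the specializations $\mathbf{t}' \in k^r$ at which: (i) the leading coefficients (as $x$-polynomials) and discriminants of $f$ and $g$ do not vanish, so the specializations $f(\mathbf{t}',x)$ and $g(\mathbf{t}',x)$ remain defined, monic, separable, and of the same degrees; (ii) $f(\mathbf{t}',x)$ and $g(\mathbf{t}',x)$ are both irreducible in $k[x]$. Condition (ii) is exactly a Hilbert-subset condition on the irreducible polynomials $f, g \in k(\mathbf{t})[x]$, and the finitely many nonvanishing conditions in (i) are absorbed into the polynomial $g(\mathbf{t})$ appearing in the definition of a Hilbert subset. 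Since $k$ is Hilbertian, this Hilbert subset $\mathcal{H}$ is nonempty. For $\mathbf{t}' \in \mathcal{H}$, Lemma~\ref{specialization} (applied with $A = k[\mathbf{t}]_{(g_0)}$, or just $k[\mathbf{t}]$ localized so the leading coefficients are units, and $\psi$ the evaluation at $\mathbf{t}'$) gives that $\Gal(f(\mathbf{t}',x)/k)$ and $\Gal(g(\mathbf{t}',x)/k)$ are each isomorphic to subgroups of $G = \Gal(f(\mathbf{t},x)/k(\mathbf{t}))$; and similarly $\Gal(g(\mathbf{t}',x)/k) \hookrightarrow \Gal(g(\mathbf{t},x)/k(\mathbf{t})) = \Gal(M/k(\mathbf{t})) = G$ since $M$ is the splitting field of the irreducible $g$.

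**Forcing equality.** The remaining point is to upgrade the inclusion $\Gal(g(\mathbf{t}',x)/k) \hookrightarrow G$ to an isomorphism, since $g$ was chosen so that its splitting field equals $M$. Because $g(\mathbf{t}',x)$ is irreducible of degree $[M:k(\mathbf{t})] = |G|$, its splitting field over $k$ has degree at least $|G|$; combined with the embedding of its Galois group into $G$, this forces $\Gal(g(\mathbf{t}',x)/k) \cong G$. Then $f(\mathbf{t}',x)$ splits in the splitting field of $g(\mathbf{t}',x)$: indeed $\theta = P(\alpha_1,\dots,\alpha_d)$ for a polynomial $P$, and conversely each $\alpha_i$ lies in $k(\mathbf{t})(\theta) = M$, so each $\alpha_i = Q_i(\theta)$ for some $Q_i \in k(\mathbf{t})[x]$; specializing these identities (which hold over the localized $k[\mathbf{t}]$) at $\mathbf{t}'$ shows the roots of $f(\mathbf{t}',x)$ lie in $k(\theta')$, hence the splitting field of $f(\mathbf{t}',x)$ equals that of $g(\mathbf{t}',x)$. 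Therefore $\Gal(f(\mathbf{t}',x)/k) \cong \Gal(g(\mathbf{t}',x)/k) \cong G$, as desired.

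**Main obstacle.** I expect the only real subtlety to be bookkeeping about where the specialization map is well-defined: one must localize $k[\mathbf{t}]$ at the product of the denominators appearing in $\theta = P(\alpha_i)$, in the $Q_i$, and in the leading coefficient of $g$, and then include all these nonvanishing conditions into the single auxiliary polynomial defining the Hilbert subset. This is routine but needs care to make sure Lemma~\ref{specialization} applies with an integrally closed base (one can always enlarge the localization of $k[\mathbf{t}]$ to stay inside a principal localization of a polynomial ring, which is integrally closed). Everything else is a direct packaging of Lemma~\ref{specialization} together with the Hilbertian hypothesis.
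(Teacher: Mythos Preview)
The paper does not actually supply a proof of this lemma; it is stated with a citation to \cite[Lemma~6.1]{odoni} and then used without further argument. So there is no ``paper's proof'' to compare against.

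That said, your argument is correct and is the standard one (essentially what one finds in Fried--Jarden, \emph{Field Arithmetic}). The key move---passing from $f$ to the minimal polynomial $g$ of a primitive element $\theta$ of the splitting field $M$, so that irreducibility of the specialization $g(\mathbf{t}',x)$ alone pins down $[k(\theta'):k]=|G|$ and hence, together with the embedding from Lemma~\ref{specialization}, forces $\Gal(g(\mathbf{t}',x)/k)\cong G$---is exactly the right idea. Your identification of the two splitting fields after specialization via the polynomial relations $\alpha_i=Q_i(\theta)$ and $\theta=P(\alpha_1,\dots,\alpha_d)$ is also correct, once the appropriate denominators are inverted. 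The bookkeeping you flag in the final paragraph is genuine but, as you say, routine: any localization of the polynomial ring $k[\mathbf{t}]$ is integrally closed, so Lemma~\ref{specialization} applies, and the finitely many nonvanishing conditions (leading coefficients, discriminants, denominators of the $Q_i$ and of $P$) are absorbed into the auxiliary polynomial in the definition of the Hilbert set.
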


\begin{corollary} Let $k$ be a Hilbertian field and $d>1$ an integer with $(d,\ch k)\neq (2,2)$. Then there are a Hilbert subsets $\cH_1$ and $\cH_2$  of $k^{d}$ and $k^{2d-1}$ respectively, such that $\Gal(f^n(x)/k)\cong [S_d]^n$ for any $f(x)=x^d+a_{d-1}x^{d-1}+\ldots+a_0$ with $(a_{d-1},\ldots, a_0)\in \cH_1$, and $\Gal(\varphi^n(x)/k)\cong [S_d]^n$ for any $\varphi(x)=\frac{x^d+a_{d-1}x^{d-1}+\ldots+a_0}{b_dx^d+b_{d-1}x^{d-1}\ldots+b_0}$ with $(a_{d-1},\ldots, a_0, b_d,\ldots, b_0)\in \cH_2$.
\end{corollary}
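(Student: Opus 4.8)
The plan is to deduce the Corollary from Theorem \ref{generic} by transporting the two generic Galois groups down to $k$ through the Hilbertian specialization Lemma \ref{hilbertian}. The only real work is to check the hypotheses of that lemma (monic, irreducible, separable), and, in the rational case, to repackage the non-monic numerator of $\Phi^n$ as a monic polynomial with the same splitting field.

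For the polynomial statement this is nearly immediate. By Theorem \ref{generic} (valid since $(d,\ch k)\neq(2,2)$) we have $\Gal(\fG^n(x)/k(\textbf{s}))\cong[S_d]^n$; regarded as a permutation group on the $d^n$ roots of $\fG^n(x)$ this is the full wreath power, which acts transitively on the leaves of the $d$-ary tree, so $\fG^n(x)$ is irreducible over $k(\textbf{s})$. It is visibly $x$-monic of degree $d^n$, and it is $x$-separable because $\frac{d}{dx}\fG(x)\neq 0$ (its constant term is the indeterminate $s_1$), whence $\frac{d}{dx}\fG^n(x)\neq 0$ by the inductive observation of Section \ref{H}. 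I would then invoke Lemma \ref{hilbertian} with $s_0,\dots,s_{d-1}$ playing the role of the indeterminates $t_1,\dots,t_d$, obtaining a Hilbert subset $\mathcal{H}_1\subseteq k^d$ (nonempty, as $k$ is Hilbertian) such that for every $(a_{d-1},\dots,a_0)\in\mathcal{H}_1$ the specialization $\fG^n(x)|_{\textbf{s}=\textbf{a}}=f^n(x)$, where $f(x)=x^d+a_{d-1}x^{d-1}+\dots+a_0$, is irreducible and separable over $k$ with $\Gal(f^n(x)/k)\cong\Gal(\fG^n(x)/k(\textbf{s}))\cong[S_d]^n$. This settles the polynomial case.

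For the rational-function statement, let $P_n(x)\in k[\textbf{s,u}][x]$ be the numerator of $\Phi^n(x)$ and let $c_n\in k[\textbf{s,u}]$ be its (nonzero) leading $x$-coefficient. By Theorem \ref{generic} the splitting field of $P_n(x)$ over $k(\textbf{s,u})$ has Galois group $[S_d]^n$, and, exactly as above, $P_n(x)$ is irreducible and $x$-separable of $x$-degree $d^n$. The point is that $P_n(x)$ need not be $x$-monic, so I would replace it by $Q_n(x):=c_n^{\,d^n-1}P_n(x/c_n)$: expanding shows $Q_n(x)\in k[\textbf{s,u}][x]$ is $x$-monic of degree $d^n$ with roots equal to $c_n$ times the roots of $P_n(x)$, hence $Q_n(x)$ is irreducible, $x$-separable, and has the same splitting field over $k(\textbf{s,u})$, so $\Gal(Q_n(x)/k(\textbf{s,u}))\cong[S_d]^n$. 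Applying Lemma \ref{hilbertian} to $Q_n(x)$, with the $s_i$ and $u_j$ as the parameters, produces a Hilbert subset $\mathcal{H}'$ over which the specialized $Q_n$ stays irreducible and separable with Galois group $[S_d]^n$.

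The step I expect to require the most care is ensuring that the points one keeps genuinely parametrize rational functions of degree $d$ whose $n$-th iterate has numerator exactly $P_n|_{\textbf{a},\textbf{b}}$ of full degree $d^n$. I would handle this by cutting $\mathcal{H}'$ down, intersecting it with the Hilbert subset on which finitely many explicit nonzero polynomials in the coefficients do not vanish: $c_n\neq 0$ (so that $P_n|_{\textbf{a},\textbf{b}}$ and $Q_n|_{\textbf{a},\textbf{b}}$ differ only by the constant substitution $x\mapsto x/c_n(\textbf{a},\textbf{b})$ and hence share a splitting field over $k$), $\res(p,q)\neq 0$ (so $\varphi$ has degree exactly $d$), together with the leading-coefficient and resultant conditions at each intermediate stage $\varphi^2,\dots,\varphi^n$ that forbid any drop in degree upon specialization. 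Since a finite intersection of Hilbert subsets, with finitely many further nonvanishing conditions adjoined, is again a Hilbert subset, this yields the desired $\mathcal{H}_2$, and for $(\textbf{a},\textbf{b})\in\mathcal{H}_2$ one gets $\Gal(\varphi^n(x)/k)=\Gal(P_n|_{\textbf{a},\textbf{b}}/k)=\Gal(Q_n|_{\textbf{a},\textbf{b}}/k)\cong[S_d]^n$. Everything outside this last piece of bookkeeping is a routine transcription of Theorem \ref{generic} through Lemma \ref{hilbertian}, and I do not anticipate any genuine difficulty there.
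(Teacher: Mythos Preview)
Your proposal is correct and follows the route the paper intends: the Corollary is stated in the paper without proof, as an immediate consequence of Theorem \ref{generic} together with Lemma \ref{hilbertian}. Your write-up simply fills in the details the paper suppresses, in particular the monicization of $P_n(x)$ and the bookkeeping that keeps the specializations honest; these are exactly the points one has to check, and your treatment of them is sound.
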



\section{The Case $(d,p)=(2,2)$}\label{2,2}

In the case $d=p=2$, we get different results for polynomials and rational functions so we examine these cases separately. First we look at rational functions since this case is much like the cases we have already examined.

\subsection{Rational Functions}

\begin{theorem} Let $k$ be an algebraically closed field with characteristic $2$. For any $N\in \bN$, there is a nonempty Zariski-open subset, $H$, of $\Rat_2(k)$ such that for any  $\varphi(x)\in H$, $\Gal(\varphi^N(x)-t/k(t)) \cong [S_2]^N$.
\end{theorem}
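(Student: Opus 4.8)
The plan is to mimic the strategy used for polynomials in Section \ref{H}: find sufficient conditions on a rational function $\varphi$ of degree $2$ in characteristic $2$ so that Theorem \ref{sufficient} applies, and then show that the set of such $\varphi$ is a nonempty Zariski-open subset of $\Rat_2(k)$. The key point distinguishing this case from the polynomial case is that for a generic rational function of degree $2$ over a field of characteristic $2$, the derivative does not vanish identically and the critical points are genuinely simple in a way that the inertia groups over ramified primes come out to be transpositions rather than $3$-cycles. So unlike the polynomial situation, one does not need an auxiliary cube-free condition: a rational function is much less rigid than a polynomial and the ``bad'' behaviour that forces $3$-cycles (postcritical finiteness of $x^2$-type maps) does not occur generically.

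First I would write $\varphi(x) = p(x)/q(x)$ with $p,q$ of degree $\leq 2$, $\gcd(p,q)=1$, and compute the ramification data: by Lemma \ref{discriminant formula} the ramified primes of $k(t)$ in the splitting field of $\varphi(x)-t$ correspond to the critical values $\varphi(a)$ with $\varphi'(a)=0$, with multiplicity $e(a|\varphi(a))$. A degree-$2$ separable rational map over an algebraically closed field of characteristic $2$ is wildly ramified, so the Riemann--Hurwitz count gives that $\varphi$ has a single critical point, with $e(a\mid\varphi(a))=2$ generically (this is the analogue of $\Delta$ having one factor to the power $2$). Then by Corollary \ref{transposition or three cycle}, the inertia group $I(\fq|\fp)$ over the ramified prime $\fp=(\varphi(a)-t)$ is either a transposition or a $3$-cycle; but in $S_2$ a $3$-cycle is impossible, so it must be the transposition, which already gives $\Gal(\varphi(x)-t/k(t))\cong S_2$ (a nontrivial subgroup of $S_2$). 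This verifies the first hypothesis of Theorem \ref{sufficient}, and the transposition condition in the $\ch k = 2$ clause of that theorem is automatic.

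Next I would impose the postcritical separation condition: letting $a$ be the unique critical point of $\varphi$, require $\varphi^n(a)\neq \varphi^m(b)$ for all $m\leq n\leq N$ unless $m=n$ and $b=a$ — and since $a$ is the only critical point, this just says the orbit $\{a,\varphi(a),\ldots,\varphi^N(a)\}$ consists of $N+1$ distinct points (i.e.\ $\varphi$ is not postcritically ``periodic or preperiodic too soon''), together with $\varphi^m(a)\neq\infty$ after the reduction in Theorem \ref{sufficient}. Given this, Theorem \ref{sufficient} yields $\Gal(\varphi^N(x)-t/k(t))\cong [S_2]^N$. To finish, I would exhibit the conditions as Zariski-open: the locus in $\Rat_2(k)$ where $\varphi'\not\equiv 0$, where $\res(p,q)\neq 0$, where $\varphi$ has a simple critical point with $e=2$, and where the first $N$ postcritical points are pairwise distinct and finite, are each defined by the non-vanishing of explicit resultants/discriminants in the coefficients (the postcritical conditions via an ideal-theoretic / transcendence-degree argument as in the proof of Lemma \ref{Zariski}), hence their intersection is Zariski-open; nonemptiness follows by writing down one explicit example, e.g.\ something like $\varphi(x)=(x^2+x)/(x+c)$ or $\varphi(x)=x^2/(x^2+x+1)$ for a suitable $c\in k$ (in characteristic $2$), checking its critical point and verifying the orbit is long enough — here one may invoke that $k$ is infinite to choose parameters avoiding the finitely many bad subvarieties, exactly as in the polynomial argument.

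The main obstacle I expect is the ramification analysis in characteristic $2$: because degree-$2$ covers are wildly ramified when $p=2$, one cannot naively read off $e(a|\varphi(a))$ from ``simple critical point'' the way one does in characteristic $\neq 2$, and one has to be careful that the discriminant $\Delta(\varphi(x)-t)$ really is $(\varphi(a)-t)^2$ up to a constant (so that Corollary \ref{transposition or three cycle} applies) rather than some higher power that would land outside the scope of that corollary. Concretely, one needs to verify that for the generic $\varphi$ the different exponent at the wild place is exactly $2$, equivalently that $p'(x)q(x)-p(x)q'(x)$ (which in characteristic $2$ has a special form) has the right degree and simple roots; handling the degenerate loci where this fails is what the Zariski-open conditions are designed to exclude, and pinning down the right open condition — rather than the routine verification that it is open and nonempty — is the delicate part.
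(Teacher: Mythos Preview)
Your approach is essentially the paper's: impose a finite critical point plus a postcritical separation condition, then invoke Theorem \ref{sufficient}; verify the conditions cut out a nonempty Zariski-open set by the same style of argument as Lemma \ref{Zariski}. That is exactly what the paper does.

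However, the ``main obstacle'' you flag at the end --- pinning down the exact power of $\fp$ in $\Delta(\varphi(x)-t)$ so that Corollary \ref{transposition or three cycle} applies, and worrying about the different exponent at the wild place --- is not an obstacle at all, and the paper does not engage with it. You already observed that in $S_2$ there are no $3$-cycles; push that observation one step further. The inertia group $I(\fq\mid\fp)$ is a subgroup of $\Gal(\varphi(x)-t/k(t))\subseteq S_2$. Since $a$ is a critical point, $\fp=(\varphi(a)-t)$ divides $\Delta(\varphi(x)-t)$ and hence ramifies, so $I(\fq\mid\fp)$ is nontrivial. A nontrivial subgroup of $S_2$ is $S_2$ itself, whose nonidentity element \emph{is} the transposition. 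Thus the transposition hypothesis in the $\ch k=2$ clause of Theorem \ref{sufficient} is automatic, and simultaneously $\Gal(\varphi(x)-t/k(t))\cong S_2$. No appeal to Corollary \ref{transposition or three cycle}, no analysis of wild ramification exponents, is needed. Once you see this, your proof collapses to a few lines, matching the paper.
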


\begin{proof}
	Let $H$ be the set of degree two rational functions with coefficients in $k$ such that 
	\begin{itemize}
		\item $\varphi(x)$ has a finite critical point $w$, and
		\item $\varphi^m(w)\neq \varphi^n(w)$, for $0\leq n,m \leq N$, unless $n= m$. 
	\end{itemize}
	
	If $\varphi(x)=\frac{a_2x^2+a_1x+a_0}{b_2x^2+b_1x+b_0}$ then $\varphi'(x)=\frac{(a_1b_2-a_2b_1)x^2+(a_1b_0-a_0b_1)}{(b_2x^2+b_1x+b_0)^2}$, so $\varphi$ has a finite critical point if $a_1b_2-a_2b_1\neq 0$. Using similar arguments to those in the proof of Lemma \ref{Zariski}, we see that $H$ is Zariski-open. To see that $H$ is nonempty, note that if $\varphi(x)=\frac{x^2+a_1x+a_0}{b_1x}$ where $\left(\frac{a_1}{b_1}\right)^2 \neq a_0$, then the second property holds up to the first iterate. To see that there exists $\varphi(x)$ such that this property holds for any $N$, we again refer to the arguments from Lemma \ref{Zariski}.
	
	Let $\varphi(x)\in H$. Since $\varphi(w)-t$ ramifies in the splitting field of $\varphi(x)-t$, the inertia subgroup $I(\fq|\varphi(w)-t)$ is nontrivial, where $\fq$ is the prime extending $\varphi(w)-t$. Also, $I(\fq|\varphi(w)-t)$ is clearly contained in $S_2$. Thus, Theorem \ref{sufficient} implies $\Gal(\varphi^N(x)-t/k(t))\cong [S_2]^N$.
\end{proof}

Now, let $k$ be any field of characteristic $2$ and let $\Phi(x)$ be the generic rational function of degree 2 over $k$ as defined in Section \ref{Generic}. 

\begin{corollary} \label{generic 2,2} In the case $(d,p)=(2,2)$. $\Gal(\Phi^n(x)-t/k(t))\cong [S_2]^n$. 
\end{corollary}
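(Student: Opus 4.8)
The plan is to mimic exactly the argument used to prove Theorem \ref{generic} in the case $(d,p)\neq (2,2)$, namely to exhibit a convenient specialization of the generic rational function $\Phi(x)$ of degree $2$ and then invoke Lemma \ref{specialization} together with Corollary \ref{iterated wreath}. First I would let $k$ be an arbitrary field of characteristic $2$, and (as in the proof of Theorem \ref{sufficient}/Theorem \ref{generic}) replace $k$ by its algebraic closure $\overline{k}$, since passing to $\overline{k}$ can only shrink the Galois group; it therefore suffices to produce, over $\overline{k}(t)$, a rational function $g(x)$ of degree $2$ which is a specialization of $\Phi^n$ and satisfies $\Gal(g^n(x)/\overline{k}(t))\cong [S_2]^n$.

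Next I would use the previous theorem of this section: there is a nonempty Zariski-open subset $H\subseteq \Rat_2(\overline{k})$ such that $\Gal(\varphi^N(x)-t/\overline{k}(t))\cong [S_2]^N$ for every $\varphi\in H$. Pick any such $\varphi(x)=\dfrac{a_2x^2+a_1x+a_0}{b_2x^2+b_1x+b_0}\in H$ and set $g(x):=\varphi(x+t)-t$, so that $g^n(x)=\varphi^n(x+t)-t$ and hence $\Gal(g^n(x)/\overline{k}(t))\cong\Gal(\varphi^n(x)-t/\overline{k}(t))\cong [S_2]^n$; here $g$ is again a degree-$2$ rational function, say $g=P/Q$ with $P,Q\in\overline{k}(t)[x]$ and $\deg P=2$. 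Now define the ring homomorphism $\psi:\overline{k}[\textbf{s},\textbf{u}]\to \overline{k}(t)$ sending $s_0,s_1$ to the two lower coefficients and $u_0,u_1,u_2$ to the coefficients of a numerator/denominator presentation of $g$ (after clearing denominators and scaling so that the $x^2$-coefficient of the numerator is monic, which is harmless since rescaling both numerator and denominator does not change the rational function); extend $\psi$ to $\widetilde{\psi}:\overline{k}[\textbf{s},\textbf{u}][x]\to\overline{k}(t)[x]$ coefficientwise. Then $\widetilde{\psi}(P_n(x))$, where $P_n$ is the numerator of $\Phi^n$, equals the numerator of $g^n(x)$, whose splitting field over $\overline{k}(t)$ is exactly $K_n$ (by the Cullinan--Hajir observation recalled before Lemma \ref{discriminant formula}, the splitting field of $g^n-0$ is the splitting field of its numerator). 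Since $g^n(x)-0$ has $2^n$ distinct roots (by the $[S_2]^n$ conclusion the extension has the right degree, in particular the numerator is separable), Lemma \ref{specialization} gives $\Gal(\Phi^n(x)/\overline{k}(\textbf{s},\textbf{u}))\supseteq \Gal(g^n(x)/\overline{k}(t))\cong[S_2]^n$, while Corollary \ref{iterated wreath} gives the reverse inclusion $\Gal(\Phi^n(x)/k(\textbf{s},\textbf{u}))\subseteq [S_2]^n$. Chaining these with the inclusion $\Gal(\Phi^n(x)/\overline{k}(\textbf{s},\textbf{u}))\subseteq\Gal(\Phi^n(x)/k(\textbf{s},\textbf{u}))$ forces all containments to be equalities, and also $\Gal(\Phi^n(x)-t/k(t))\cong[S_2]^n$ since this group sits between the same two bounds (note the statement is phrased for $\Phi^n(x)-t$ over $k(t)$, but $\Phi^n(x)$ over $k(\textbf{s},\textbf{u})$ specializes to $\Phi^n(x)-t$ over the relevant base after the standard change of variables $s_i\mapsto s_i, $ transcendental $t$, exactly as in the reduction paragraph preceding Lemma \ref{disjoint ramification}).

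The main obstacle I anticipate is purely bookkeeping rather than conceptual: one must be careful that "specialization of $\Phi^n$" really does land on the numerator of $g^n$ and that the hypotheses of Lemma \ref{specialization} (leading coefficient not in the kernel, target polynomial separable) are genuinely met in characteristic $2$ with $d=2$, where separability is the delicate point — but this is guaranteed precisely because $\varphi\in H$ forces $\varphi^n(x)-t$ to have $2^n$ distinct roots, so no extra work is needed. A secondary point to check is the compatibility of the two formulations of the statement ($\Phi^n(x)-t$ over $k(t)$ versus $\Phi^n(x)$ over $k(\textbf{s},\textbf{u})$); this is handled exactly as in the opening reductions of Section \ref{H}, where one observes that a single generic transcendental substitution identifies the two setups and that Galois groups do not grow under specialization.
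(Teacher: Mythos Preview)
Your proposal is correct and follows essentially the same route as the paper: reduce to algebraically closed $k$, pick $\varphi\in H$ from the preceding theorem, form $g(x)=\varphi(x+t)-t$, and specialize $\Phi$ to $g$ to sandwich the Galois group between two copies of $[S_2]^n$ via Lemma~\ref{specialization} and Corollary~\ref{iterated wreath}. The paper's own proof is a one-sentence sketch invoking exactly this argument (``specializing the coefficients of $\Phi^n(x)$ to the coefficients of $\varphi^n(x+t)-t$ for any $\varphi\in H$, as in the proof of the other cases of Theorem~\ref{generic}''), so your write-up simply fills in the details the paper leaves implicit.
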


\begin{proof} It suffices to show that the result holds in the case $k$ is algebraically closed, this follows from specializing the coefficients of $\Phi^n(x)$ to the coefficients of $\varphi^n(x+t)-t$ for any $\varphi \in H$, as in the proof of the other cases of Theorem \ref{generic}.
\end{proof}

\subsection{Polynomial Functions}

The above arguments for rational functions depend on the fact that we can find rational functions $\varphi$ for
which the critical point of $\varphi$ has an infinite orbit. However, for polynomials of degree 2 in characteristic 2, the situation is much different. In characteristic 2 any separable polynomial of degree 2 is ramified only at infinity which is a fixed point, thus, the polynomial is post-critically finite. Thus, we can expect the result to be much different in this case. The extensions obtained in this section, particularly those discussed in Theorem \ref{deg2poly} are Artin-Schreier type extensions, which are a characteristic $p$ analog to Kummer extensions, see \cite{GASH}\cite{Lang}.

Let $k$ be any field of characteristic 2, let $\fG(x)$ be the generic monic polynomial of degree 2 defined over $k$. Then $\fG(x)=x^2+sx+t$ for $s,t$ algebraically independent over $k$. 

\begin{theorem}\label{2generic} $\Gal(\fG^n(x)/k(s,t))\cong R_n \rtimes R_n^*$ where $R_n=\F_2[Y]/(Y^n)$ and $R_n \rtimes R_n^*$ is the group of invertible affine linear transformations of $R_n$. 
\end{theorem}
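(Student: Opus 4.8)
The plan is to obtain an a priori embedding $\Gal(\fG^{n}(x)/k(s,t))\hookrightarrow R_{n}\rtimes R_{n}^{*}$ from the additive structure of $\fG$, and then to prove surjectivity by an induction whose inductive step is a wildly ramified analogue of the ``disjoint ramification'' argument of Section~\ref{H}.

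\textbf{The embedding.} Since $\ch k=2$, the polynomial $L(x):=x^{2}+sx$ is $\F_2$-linear and separable ($L'(x)=s\neq0$), and $\fG(x)=L(x)+t$. Induction on $n$ gives $\fG^{n}(x)=L^{n}(x)+c_{n}$ with $c_{n}=\sum_{i=0}^{n-1}L^{i}(t)\in k(s)[t]$, so the roots of $\fG^{n}(x)$ form a coset $x_{0}+\ker L^{n}$, where $\ker L^{n}$ (the roots of the additive polynomial $L^{n}$) is an $\F_2$-vector space of dimension $n$ which is cyclic over $\F_2[L]$; letting $Y$ act as $L$ identifies $\ker L^{n}$ with $R_{n}=\F_2[Y]/(Y^{n})$ as an $R_{n}$-module. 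Every $\sigma\in G_{n}:=\Gal(\fG^{n}(x)/k(s,t))$ is additive, commutes with $L$ (whose coefficients lie in $k(s)$), and permutes $\ker L^{n}$, so $\sigma|_{\ker L^{n}}$ is an $R_{n}$-module automorphism, i.e.\ multiplication by a unit $\alpha_{\sigma}\in R_{n}^{*}$, while $v_{\sigma}:=\sigma(x_{0})-x_{0}\in\ker L^{n}$. One checks $\alpha_{\sigma\tau}=\alpha_{\sigma}\alpha_{\tau}$ and $v_{\sigma\tau}=v_{\sigma}+\alpha_{\sigma}v_{\tau}$, so $\sigma\mapsto(v_{\sigma},\alpha_{\sigma})$ is an injective homomorphism into the affine group $R_{n}\rtimes R_{n}^{*}$ (acting on the roots by $x_{0}+w\mapsto x_{0}+\alpha_{\sigma}w+v_{\sigma}$); injectivity is clear since the splitting field $K_{n}$ is generated by the roots. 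Hence it suffices to prove $[K_{n}:k(s,t)]=|R_{n}\rtimes R_{n}^{*}|=2^{2n-1}$.

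\textbf{The induction.} For $n=1$, $x^{2}+sx+t$ is irreducible over $k(s,t)$ and $G_{1}\cong S_{2}=R_{1}\rtimes R_{1}^{*}$. For the step, the reduction $R_{n+1}\twoheadrightarrow R_{n}$ induces a surjection of affine groups whose kernel has order $4$, so $[K_{n+1}:K_{n}]\le4$, and it suffices to show equality. Write $K_{n+1}=K_{n}(v,\beta)$, where $v$ is an $R_{n+1}$-module generator of $\ker L^{n+1}$ and $\beta$ is a root of $\fG^{n+1}$ with $\fG(\beta)=x_{0}$; then $v^{2}+sv=w$ with $w:=L(v)$ a generator of $\ker L^{n}$, and $\beta^{2}+s\beta=x_{0}-t$. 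Dividing by $s^{2}$ (and writing $\wp(y)=y^{2}+y$) gives Artin--Schreier classes $[w/s^{2}]$ and $[(x_{0}-t)/s^{2}]$ in $K_{n}/\wp(K_{n})$. Since $t/s^{2}=\wp(\gamma/s)$ for $\gamma\in K_{1}\subseteq K_{n}$ a root of $\fG$ (indeed $\gamma^{2}+s\gamma=\fG(\gamma)-t=t$), we have $[t/s^{2}]=0$ and hence $[(x_{0}-t)/s^{2}]=[x_{0}/s^{2}]$. Examining the $\fG$-preimages of all the roots $x_{0}+v'$ of $\fG^{n}$ (whose classes are $[x_{0}/s^{2}]+[v'/s^{2}]$, with $[v'/s^{2}]\in\{0,[w/s^{2}]\}$), one finds that $\Gal(K_{n+1}/K_{n})$ corresponds, under the Artin--Schreier correspondence, to the subgroup of $K_{n}/\wp(K_{n})$ generated by $[x_{0}/s^{2}]$ and $[w/s^{2}]$; so $[K_{n+1}:K_{n}]=4$ if and only if these two classes are $\F_2$-independent.

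\textbf{The main obstacle.} The class $[w/s^{2}]$ comes from an element of $F_{n}:=k(s)(\ker L^{n})$, the constant field of $K_{n}$, and is nonzero exactly when $[F_{n+1}:F_{n}]=2$. The natural tool here is to recognise $L=\tau+s$ ($\tau$ the Frobenius $x\mapsto x^{2}$) as the Carlitz module for $\F_2[s]$, so that $F_{n}$ is its $s^{n}$-cyclotomic function field; the input needed is the classical fact $\Gal(F_{n}/k(s))\cong R_{n}^{*}$, with $(s)$ totally wildly ramified and uniformized by a generator of $\ker L^{n}$, together with $F_{n}$ being the full constant field of $K_{n}$ --- all of which I would carry in a strengthened inductive hypothesis. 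The crux is then to show that the Artin--Schreier extension of $K_{n}$ given by $[x_{0}/s^{2}]$ is ramified, hence nonzero and distinct from the (unramified) constant extension $[w/s^{2}]$. Because the only critical value of $\fG$ is the fixed point $\infty$, the polynomial discriminant of $\fG^{n}(x)$ over $k(s)(t)$ is the constant $\pm s^{n2^{n}}$, so $K_{n}/k(s)(t)$ is unramified away from $t=\infty$; at the place $\mathfrak P$ over $t=\infty$ one shows $v_{\mathfrak P}(t)=-2^{n}$ and $v_{\mathfrak P}(x_{0})=-2^{n-1}$ from $L^{n}(x_{0})=c_{n}(t)$, and then analyses $[x_{0}/s^{2}]$ locally at $\mathfrak P$: one reduces its pole order by subtracting suitable $\wp(z)$ until stuck, either at an odd pole order or at an even one whose leading coefficient is a non-square in the imperfect residue field $F_{n}$, concluding ramification in either case. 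The case $n=1$, where $[x_{0}/s^{2}]$ already has pole order $1$ at $\mathfrak P$, is the model; carrying out this local reduction for general $n$ forces one to track the wild ramification filtration of $K_{n}/k(s)(t)$ over $t=\infty$ along the backward orbit $x_{0},\fG(x_{0}),\dots,0$, and this bookkeeping --- together with the Carlitz facts above --- is where the real work lies.
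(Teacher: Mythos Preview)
Your embedding $G_n \hookrightarrow R_n \rtimes R_n^*$ matches the paper's exactly, as does the reduction to proving $[K_n:k(s,t)] = 2^{2n-1}$. For this degree count, however, the paper bypasses induction, Artin--Schreier classes, and wild ramification entirely: it uses two short Eisenstein arguments.

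Set $F_n := k(s)(\ker L^n)$. The paper factors $[K_n:k(s,t)] = [K_n:F_n(t)]\cdot[F_n(t):k(s,t)]$ and handles each piece directly:
\begin{itemize}
\item A generator $v_n$ of $\ker L^n$ (chosen so that $L(v_{i+1})=v_i$, $v_1=s$) satisfies $L^{n-1}(v_n)=s$, which is an $s$-Eisenstein polynomial of degree $2^{n-1}$ over $k[s]$; hence $[F_n:k(s)]=2^{n-1}$.
\item Since the roots of $\fG^n$ form a single coset of $\ker L^n$, one has $K_n=F_n(t)(\alpha)$ for any root $\alpha$, so it suffices that $\fG^n(x)$ be irreducible over $F_n(t)$. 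The paper obtains irreducibility even over $\overline{k(s)}(t)$ by specializing $s\mapsto 0$ via Lemma~\ref{specialization}: the image $\widetilde{\fG}^n(x)\in\overline{k}[t][x]$ is $t$-Eisenstein. Thus $[K_n:F_n(t)]=2^n$.
\end{itemize}
The identification $\Gal(F_n/k(s))\cong R_n^*$, which you outsource to Carlitz-module theory, falls out of the same setup: any $\tau\in\Gal(F_n/k(s))$ commutes with $L$ and is determined by $\tau(v_n)=\sum_i a_iL^{n-i}(v_n)$, and $\tau\mapsto\sum_i a_iY^{n-i}$ is the desired isomorphism.

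Your inductive Artin--Schreier strategy mirrors the disjoint-ramification philosophy of Section~\ref{H}, and with care it could likely be completed; but you yourself flag the local analysis at $\mathfrak P\mid\infty$ as ``where the real work lies,'' and you leave it undone, along with the verification that $F_n$ is the exact constant field of $K_n$ (needed to know that the extension by $[w/s^2]$ really is unramified in the $t$-direction). All of that is sidestepped by the two Eisenstein criteria above.
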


\begin{proof} Let $E$ be an algebraic closure of $k(s,t)$ and let $K\subset E$ be an algebraic closure of $k(s)$. Consider the surjective, $\F_2$-linear map $\cL: E \lra E$ by $\cL(\xi)=\xi^2+s\xi$. Note, every $\eta \in E$, has exactly two distinct preimages under $\cL$ in $E$, if $\cL(\xi)=\eta$, then the other preimage of $\eta$ is $\xi+s$.  It follows that $\dim_{\F_2}(\ker (\cL^n))=n$ for all $n\in \bN$. 
	Let $v_1=s$ and define a sequence $\{v_n\}_{n\in \bN}$ via $\cL(v_{n+1})=v_n$ for all $n \in \bN$. Then it is easy to see that $v_1,v_2,\ldots,v_n$ forms a basis for $\ker(\cL^n)\subseteq K$ over $\F_2$. Define $F_n=k(s,\ker(\cL^n))=k(s,v_n)$. If $\sigma$ is any $k(s)$-automorphism of $K$, then  $\sigma(\ker(\cL^n))=\ker(\cL^n)$ so $F_n/k(s)$ is a normal extension. Furthermore, for $n\geq 2$, we have $\cL^{n-1}(v_n)=v_1=s$, so $F_n/k(s)$ is finite Galois, for all $n$. 
	
	For $n\geq 2$, define $g_n(x)=\cL^{n-1}(x)-s$, then $g(v_n)=0$ where $g(x)\in k[s,x]$ is an $s$-Eisenstein polynomial of $x$-degree $2^{n-1}$. In particular, $g_n(x)$ is irreducible in $k(s)[x]$ of $x$-degree $2^{n-1}$, so that $\#\Gal(F_n/k(s))=[F_n:k(s)]=2^{n-1}$. 
	
	Now, suppose $\alpha$ and $\beta$ are zeros of $\fG^n(x)$ in the algebraic closure of $k(s,t)$. Then $\alpha+\beta \in \ker(\cL^n)$. Conversely, if $\lambda \in \ker(\cL^n)$ then $\fG^n(\alpha+\lambda)=0$. Hence, the set of $x$-zeros of $\fG^n(x)$ is precisely $\alpha +\ker(\cL^n)$, and $K_n=F_n(\alpha)$, for any root $\alpha$ of $\fG^n(x)$.
	
	Consider the specialization of $\fG(x)$ to  $\widetilde{\fG}(x)\in \overline{k}[t][x]$ given by $s\mapsto 0$. Then $\widetilde{\fG}^n(x)$ is $t$-Eisentein in $\overline{k}[t][x]$ , so it is irreducible. Thus, by Lemma \ref{specialization}, $\fG^n(x)$ is irreducible over $K(t)$, and  hence over $F_n(t)$. Fix some  root $\alpha$ of $\fG^n(x)$, then $K_n=F_n(\alpha)$. So we have $[K_n:k(s,t)]=[F_n(t,\alpha):F_n(t)][F_n(t):k(s,t)]=2^n2^{n-1}=\# (R_n \rtimes R_n^*)$. 
	
	For $\sigma \in \Gal(\fG^n(x)/k(s,t))$, if $\sigma(\alpha)=\alpha+v_\sigma$ for some $v_\sigma \in \ker \cL^n$, then for any $v\in \ker \cL^n$, we have
	\[\sigma(\alpha+v)=\sigma(\alpha)+\sigma(v)=\alpha+v_\sigma+\overline{\sigma}(v),\]
	where $\overline{\sigma}=\sigma|_{F_n}$ is the restriction of $\sigma$ to $F_n$. 
	
	Thus, $\Gal(\fG^n(x)/k(s,t))\cong B$, where $B$ is the group of all maps of the form $v\mapsto v'+\tau (v)$, where $\tau \in \Gal(F_n/k(s))$, and $v'$ is arbitrary in $\ker(\cL^n)$. We will show that $B\cong R_n \rtimes R_n^*$. Since $\ker(\cL^n)$ and $R_n$ are isomorphic as additive groups it suffices to show that $\Gal(F_n/k(s))\cong R_n^*$. Note that $\Gal(F_n/k(s))$ is uniquely determined by its action on $\ker (\cL^n)$, and since any $\tau\in\Gal(F_n/k(s))$ must commute with $\cL$, it is uniquely determined by its action on $v_n$. We will define a map $\psi$ from $\Gal(F_n/k(s))$ to $R_n$. If $\tau(v_n)=a_nv_n+a_{n-1}v_{n-1}+\ldots+a_1v_1=a_nv_n+a_{n-1}\cL(v_n)+\ldots+a_1\cL^{n-1}(v_n)$ then we must have $\tau(v)=a_nv+a_{n-1}\cL(v)+\ldots+a_1\cL^{n-1}(v)$ for all $v\in\ker(\cL^n)$. We define $\psi(\tau)=a_n+a_{n-1}y+\ldots+a_1y^{n-1}$. The map $\psi$ is clearly well-defined and injective, and is easily checked to be a homomorphism. Further, since any $\tau\in\Gal(F_n/k(s))$ is invertible, we must have $\psi(\tau)=a_n+a_{n-1}y+\ldots+a_1y^{n-1}$ where $a_n=1$. So, $\psi(\Gal(F_n/k(s)))\subseteq R_n^*$, and since $\#\Gal(F_n/k(s))=2^{n-1}=\#R_n^*$, $\psi$ maps $\Gal(F_n/k(s))$ onto $R_n^*$ and $\Gal(F_n/k(s))\cong R_n^*$ as desired.
	
\end{proof}

In many applications we wish to study the proportion of elements in a Galois group $G$ which fix some root of the polynomial, which we call the fixed point proportion of $G$, and denote it by $\FPP(G)$. One application which involves studying the fixed point proportions of Galois groups of generic iterates, is detailed in Section \ref{orbits}. In that section we will consider the fixed point proportion of the Galois groups of iterates of generic monic polynomials in the cases where $(d,p)\neq (2,2)$. We consider the case $(d,p)=(2,2)$ here for completeness.

\begin{theorem} We have \[\lim_{n\rightarrow \infty}\FPP(\Gal(\fG^n(x)/k(s,t)))=\frac 1 3.\]
\end{theorem}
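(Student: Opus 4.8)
The plan is to compute $\FPP\big(\Gal(\fG^n(x)/k(s,t))\big)$ exactly, using the explicit description $G_n:=\Gal(\fG^n(x)/k(s,t))\cong R_n\rtimes R_n^*$ from the previous theorem, where $R_n=\F_2[Y]/(Y^n)$. Recall that $G_n$ acts on the root set of $\fG^n(x)$, which is the coset $\alpha+\ker(\cL^n)$, and that under the $\F_2[\cL]$-module isomorphism $\ker(\cL^n)\cong R_n$ sending $\cL$ to multiplication by $Y$ (and $v_n$ to $1$), the element corresponding to $(v',\tau)\in R_n\rtimes R_n^*$ acts on $R_n$ by $w\mapsto \tau w+v'$. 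The first step is to observe that, in characteristic $2$, such an element has a fixed root if and only if the equation $(1+\tau)w=v'$ is solvable, i.e. $v'\in(1+\tau)R_n$. Since $\FPP$ is the proportion of elements of $G_n$ with at least one fixed root, and $|G_n|=|R_n|\,|R_n^*|=2^n\cdot 2^{n-1}=2^{2n-1}$, this gives
\[
\FPP(G_n)=\frac{1}{2^{2n-1}}\sum_{\tau\in R_n^*}\#\big((1+\tau)R_n\big).
\]

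The second step is a counting argument in the local ring $R_n$ with maximal ideal $\mathfrak m=(Y)$. Every unit $\tau$ has constant term $1$, so $1+\tau\in\mathfrak m$; moreover $\tau\mapsto 1+\tau$ is translation by $1$, hence a bijection from $R_n^*=1+\mathfrak m$ onto $\mathfrak m$, so we may sum over $m\in\mathfrak m$ instead. For $m\in\mathfrak m$ with $Y$-adic valuation $v(m)=j\in\{1,\dots,n-1\}$ we have $mR_n=(Y^j)$, an $\F_2$-space of dimension $n-j$, so $\#(mR_n)=2^{\,n-j}$; and $\#(0\cdot R_n)=1$. The number of $m\in\mathfrak m$ with $v(m)=j$ is $2^{\,n-1-j}$. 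Therefore
\[
\sum_{\tau\in R_n^*}\#\big((1+\tau)R_n\big)=\sum_{m\in\mathfrak m}\#(mR_n)=1+\sum_{j=1}^{n-1}2^{\,n-1-j}\cdot 2^{\,n-j}=1+\sum_{j=1}^{n-1}2^{\,2n-1-2j},
\]
the leading $1$ coming from $m=0$.

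The final step is to sum the geometric series: $\sum_{j=1}^{n-1}2^{\,2n-1-2j}=\tfrac{1}{3}\big(2^{2n-1}-2\big)$, so the total count is $\tfrac13\big(2^{2n-1}+1\big)$ and
\[
\FPP(G_n)=\frac{2^{2n-1}+1}{3\cdot 2^{2n-1}}=\frac13\Big(1+\frac{1}{2^{2n-1}}\Big)\xrightarrow[n\to\infty]{}\frac13 .
\]
I do not expect any serious obstacle here; the computation is essentially forced once the two key points are in place. The only things requiring a bit of care are (i) using the correct notion of $\FPP$ (proportion of group elements with a fixed point, not the average number of fixed points, which is $1$ by Burnside), and (ii) identifying the image $(1+\tau)R_n$ with an ideal $(Y^j)$ via the valuation of $1+\tau$, after which the count of elements of each valuation in $\mathfrak m$ yields the geometric series above.
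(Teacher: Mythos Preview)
Your proof is correct and follows essentially the same approach as the paper: both reduce the fixed-point condition to $v'\in(1+\tau)R_n$, stratify by the $Y$-adic valuation of $1+\tau$, and sum the resulting geometric series. Your use of the bijection $\tau\mapsto 1+\tau$ from $R_n^*$ onto $\mathfrak m$ is a clean way to organize the count, and your closed form $\FPP(G_n)=\tfrac{1}{3}\bigl(1+2^{-(2n-1)}\bigr)$ is exactly right.
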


\begin{proof} We have seen that $\Gal(\fG^n(x)/k(s,t))\cong B\cong R_n \rtimes R_n^*$. Moreover, if we identify  elements of $\Gal(\fG^n(x)/k(s,t))$ with elements of $B$ of the form $v'+\tau$ for $v'\in \ker(\cL^n)$ and $\tau \in \Gal(F_n/k(s))$ and identify the elements of $\ker(\cL^n)$ with elements of $R_n$ by $a_nv_n+a_{n-1}v_{n-1}+\ldots+a_1v_1 \leftrightarrow a_n+a_{n-1}y+\ldots+a_1y^{n-1}$, then it is easy to check that $v\mapsto v'+\tau(v)$ corresponds to $v\mapsto v'+\psi(\tau)\cdot v$ where $\psi$ is defined as in the previous proof and $\cdot$ is multiplication in $R_n$. Thus, $v'+\tau$ has a fixed point if and only if $v'+\psi(\tau)\cdot v=v$ for some $v$. That is, if and only if $v'=v(1-\psi(\tau))$ for some $v$, which holds if and only if $1-\psi(\tau)$ divides $v'$ in $R_n$. 
	
	If $\psi(\tau)-1=0$, then the only possible choice for $v'$ is $0$. If $\psi(\tau)-1 = y^i+\text{higher order terms}$, then $\psi(\tau)-1$ divides $v'$ if and only if $y^i$ divides $v'$. There are $2^{n-i-1}$ choices for $\psi(\tau)$ which have this form and $2^{n-i}$ such $v'$. Thus, the total number of elements of $\Gal(\fG^n(x)/k(s,t))$ which have fixed points is \[1+\sum_{i=1}^{n}2^{n-i}2^{n-i-1}=1+\frac{2^{2n-1}}{3}\left(1-\left(\frac{1}{4}\right)^{n}\right).\]
	So the fixed point proportion is 
	\[\frac{1}{2^{2n-1}}+\frac{1}{3}\left(1-\left(\frac{1}{4}\right)^{n}\right),\]
	which approaches $\frac{1}{3}$ as $n$ approaches $\infty$.
\end{proof}

We can see that unlike in the other cases $\Gal(\fG^n(x)/k(s,t))$ cannot be obtained as the Galois group of $f^n(x)-t$ over $\overline{k}[t]$ for a polynomial $f(x)\in\overline{k}[x]$ as in the other cases. In other words, there are no specializations of $s$ to $k$ preserving the Galois group such that the resulting extension of $k[t]$ is geometric.

\begin{theorem}\label{deg2poly} Let $k$ be an algebraically closed field with characteristic $2$ and let $f(x)=a_2x^2+a_1x+a_0 \in k[x]$, with $a_2a_1\neq0$ then $\Gal(f^n(x)-t/k(t))=(C_2)^n$ for all $n \in \bN$, where $C_2$ is the cyclic group of order $2$. 
\end{theorem}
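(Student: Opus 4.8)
The plan is to exploit the fact that in characteristic $2$ a quadratic polynomial is, up to affine conjugation, an \emph{additive} polynomial, and then compute the Galois group essentially by hand.

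\textbf{Step 1: normalization.} First I would observe that affine conjugation does not change the Galois group up to isomorphism: if $\phi(x)=\mu x+\lambda$ with $\mu\in k^*$, $\lambda\in k$, and $g=\phi^{-1}\circ f\circ\phi$, then $g^n=\phi^{-1}\circ f^n\circ\phi$, so $x$ is a root of $g^n(x)-t$ iff $\phi(x)$ is a root of $f^n(X)-\phi(t)$; since $\phi^{-1}$ is $k$-affine and $k(\phi(t))=k(t)$, the splitting field of $g^n(x)-t$ over $k(t)$ equals that of $f^n(X)-s$ over $k(s)$ with $s=\phi(t)$, whence $\Gal(g^n(x)-t/k(t))\cong\Gal(f^n(x)-t/k(t))$. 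A short computation in characteristic $2$ gives $g(x)=a_2\mu x^2+a_1x+\mu^{-1}\bigl(a_2\lambda^2+(a_1+1)\lambda+a_0\bigr)$; choosing $\mu=a_2^{-1}$ and $\lambda$ a root of $a_2X^2+(a_1+1)X+a_0$ (which exists since $k=\overline k$ and $a_2\ne0$) yields $g(x)=x^2+cx$ with $c=a_1\ne0$. So we may assume $f(x)=x^2+cx$, $c\ne 0$.

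\textbf{Step 2: additivity.} In characteristic $2$, $f(x)=x^2+cx$ satisfies $f(x+y)=f(x)+f(y)$ and $f(0)=0$, i.e.\ $f$ is $\F_2$-linear, hence so is each $f^n$; moreover the linear coefficient of $f^n$ is $c^n\ne0$, so $(f^n)'=c^n\ne0$ and $f^n(x)-t$ is separable over $k(t)$, and it is irreducible of degree $2^n$ (it is linear in $t$; cf.\ Section \ref{H}). Fix a root $\alpha$ of $f^n(x)-t$; it is transcendental over $k$. By additivity the full root set of $f^n(x)-t$ is $\alpha+\ker f^n$, where $\ker f^n=\{\lambda\in\overline k:f^n(\lambda)=0\}$ is an $\F_2$-subspace of $\overline k$; since $f$ is surjective on $\overline k$ with $|\ker f|=2$, one gets $\dim_{\F_2}\ker f^n=n$, consistent with $\deg(f^n(x)-t)=2^n$.

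\textbf{Step 3: the Galois group.} Since $k$ is algebraically closed, $\ker f^n\subseteq\overline k=k$, so the splitting field is $K_n=k(t)(\alpha,\ker f^n)=k(t)(\alpha)=k(\alpha)$ (using $t=f^n(\alpha)$); in particular the extension is geometric. Every $\sigma\in\Gal(K_n/k(t))$ fixes $k$ and $t$, hence is determined by $\sigma(\alpha)=\alpha+v_\sigma$ with $v_\sigma\in\ker f^n$; and since $\sigma$ fixes $v_\tau\in k$ we get $v_{\sigma\tau}=v_\sigma+v_\tau$. Thus $\sigma\mapsto v_\sigma$ is an injective homomorphism $\Gal(K_n/k(t))\hookrightarrow(\ker f^n,+)\cong(C_2)^n$, and since $[K_n:k(t)]=2^n$ it is an isomorphism, so $\Gal(f^n(x)-t/k(t))\cong(C_2)^n$. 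The main (and only real) obstacle is conceptual, namely recognizing in Step 1 that a characteristic-$2$ quadratic is affine-conjugate to an additive polynomial — equivalently, that the two $f$-preimages of any point differ by the fixed constant $a_1/a_2$, a collapse with no analogue when $\ch k\ne2$ and precisely the reason this case is unlike Theorem \ref{galois group}; everything after that is bookkeeping, with only routine care needed for the conjugation-invariance of the Galois group and the irreducibility and separability of $f^n(x)-t$.
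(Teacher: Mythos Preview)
Your proof is correct and follows essentially the same approach as the paper's: both identify the roots of $f^n(x)-t$ as a coset $\alpha+\ker(\mathcal{L}^n)$ of an $n$-dimensional $\F_2$-subspace of $k$ (with $\mathcal{L}$ additive) and then read off the Galois group as this additive group via the map $\sigma\mapsto v_\sigma$. The only difference is cosmetic: you first affine-conjugate $f$ to the additive form $x^2+cx$, whereas the paper keeps $f=a_2x^2+a_1x+a_0$ general and works directly with the auxiliary additive map $\mathcal{L}(\xi)=a_2\xi^2+a_1\xi$, using that $f^n-\mathcal{L}^n$ is constant so any two roots of $f^n(x)-t$ differ by an element of $\ker(\mathcal{L}^n)$ --- this trades your conjugation-invariance bookkeeping for the paper's slightly less transparent distinction between $f$ and $\mathcal{L}$, but neither route buys anything substantial over the other.
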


\begin{proof} Clearly $f^n(x)-t$ is irreducible in $k(t)[x]$, also, since $a_1\neq 0$, $f^n(x)-t$ is separable. Let $E$ be an algebraically closed extension of $k(t)$. We can make a change of variables so that $f(x)$ is monic, thus we may assume it has the form $f(x)=x^2+ax+b$. Consider the $\F_2$-linear map $\cL:E\lra E$ defined by $\cL(\xi)=\xi^2+a\xi$. Using similar arguments to those at the beginning of the proof of Theorem \ref{2generic}, we see that $\dim_{\F_2}(\ker(\cL^n))=n$. Further, if $\alpha$ is any $x$-zero of $f^n(x)-t$ the set of $x$-zeros of $f^n(x)-t$ is precisely $\alpha +\ker(\cL^n)$. Since $k$ is algebraically closed, $\ker(\cL^n)\subset k$. Thus, the splitting field of $f^n(x)-t$ over $k(t)$ is $k(t,\alpha)$, and $\Gal(f^n(x)-t/k(t))$ has order $[k(t, \alpha):k(t)]=2^n$. 
	
	Since the splitting field of $f^n(x)-t$ is $k(t,\alpha)$, the group $\Gal(f^n(x)-t/k(t))$ is determined by its action on $\alpha$. Let $\sigma \in \Gal(f^n(x)-t/k(t))$. Then $\sigma(\alpha)=\alpha+v_\sigma$ for some $v_\sigma \in \ker(\cL^n)\subseteq k \subseteq k(t)$. 
	The map from $\Gal(f^n(x)-t/k(t))$ to the additive group $\ker(\cL^n)$ defined by $\sigma \mapsto v_\sigma$ is easily seen to be an isomorphism. Thus, $\Gal(f^n(x)-t/k(t))\cong\ker(\cL^n)\cong(C_2)^n$.   
\end{proof} 


\section{Applications}\label{applications}

\subsection{Primes Dividing Orbits}\label{orbits}

Define a \textit{global field} to be a number field or a finite extension of $\F_q(t)$ for some finite field $\F_q$. Let $k$ be a global field, let $f(x)\in k[x]$, and let $a_0 \in k$. We define the sequence $\{f^n(a_0)\}_{n\in\bN}$ and let $P_f(a_0)$ denote the set of primes of $k$ such that $v_\fp(f^n(a_0))\neq 0$ for some $n$. 
Following the work in \cite{odoni}, we show that for any $\epsilon >0$, ``most''  polynomials $f(x)\in k(x)$ satisfy $\delta_N(P_f(a_0))<\epsilon$, for any $a_0\in k$, where $\delta_N(P_f(a_0))$ is the natural density of $P_f(a_0)$.

\begin{definition} For a global field $K$, denote the set of prime ideals of $\fo_K$ by $P(K)$, let $A$ be a subset of $P(K)$, then the \textit{Dirichlet density}, $\delta_D(A)$ is defined by \[\delta_D(A):=\lim_{s\rightarrow 1^+}\frac{\sum_{\fp\in A} (N(\fp))^{-s}}{\sum_{\fp\in P(K)} (N(\fp))^{-s}}\]  and the \textit{natural density}, $\delta_N(A)$ is defined by 
	\[\delta_N(A): =\lim_{x\rightarrow \infty} \frac{\#\{\fp\in A|\Norm(\fp)<x\}}{\#\{\fp\in\bP^1(k)|\Norm(\fp)<x\}}.\] where $N(\fp)$ denotes the size of the residue field at $\fp$.
\end{definition}

\begin{theorem}[The Chebotarev Density Theorem, see \cite{FJ}\cite{LS}\cite{murty}]\label{CDT} Let $F$ be a global field, let $K$ be a finite Galois extension of $F$, and let $G=\Gal(K/F)$. For any conjugacy class $C$ of $G$, the Dirichlet density of the set of primes $\fp$ of $F$ for which $\Frob\left(\frac{K/F}{\fp}\right)= C$ exists and is equal to $\#C/\#G$. Furthermore, if $F$ is a number field or $F$ is a function field whose constant field is algebraically closed in $K$, then the natural density of this set also exists and is equal to $\#C/\#G$.
\end{theorem}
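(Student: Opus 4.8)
The plan is to deduce the general (possibly non-abelian) statement from the special case in which $G$ is abelian, in fact cyclic, and then to establish that special case by the analytic theory of $L$-functions. Throughout I may discard the finitely many primes of $F$ that ramify in $K$, since a finite set of primes has density zero under either notion of density.

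First I would reduce to the cyclic case. Fix an element $\sigma \in C$, let $H = \langle \sigma\rangle \subseteq G$ be the cyclic subgroup it generates, and let $E = K^{H}$, so that $K/E$ is cyclic with group $H$ and the conjugacy class $\Frob\!\left(\frac{K/E}{\fP}\right)$ is a single element of $H$. The key point is a standard counting identity, of the type governed by Lemma \ref{inertia group}: an unramified prime $\fp$ of $F$ has $\Frob\!\left(\frac{K/F}{\fp}\right)=C$ if and only if some prime $\fP$ of $E$ lying over it has residue degree one over $F$ and $\Frob\!\left(\frac{K/E}{\fP}\right)=\sigma$, and the number of such $\fP$ lying over a given such $\fp$ is a constant depending only on the group theory of the triple $(G,H,\sigma)$, not on $\fp$. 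Since the primes of $E$ of residue degree $\geq 2$ over $F$ form a set of density zero (a consequence of the simple pole of the Dedekind zeta function $\zeta_E(s)$ at $s=1$), this identity expresses the density of $\{\fp : \Frob=C\}$ as an explicit multiple of the density of $\{\fP \text{ of } E : \Frob_{K/E}(\fP)=\sigma\}$, and one checks that, using $\#C = \#G/\#C_G(\sigma)$, the target multiple comes out to $\#C/\#G$ once the cyclic density is known to be $1/\#H$.

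Next I would treat the cyclic case $\Gal(K/E)=H$ abelian. By class field theory the characters $\chi$ of $H$ correspond to $L$-functions $L(s,\chi)$ (Hecke $L$-functions of $E$ in the number field case, and their function-field analogues otherwise), and orthogonality of characters gives, for $s>1$,
\[\sum_{\Frob_{K/E}(\fP)=\sigma}\Norm(\fP)^{-s} \;=\; \frac{1}{\#H}\sum_{\chi}\overline{\chi}(\sigma)\sum_{\fP}\chi(\fP)\Norm(\fP)^{-s} \;=\; \frac{1}{\#H}\sum_{\chi}\overline{\chi}(\sigma)\log L(s,\chi) + O(1).\]
The trivial character contributes $\log\zeta_E(s) = \log\frac{1}{s-1}+O(1)$; the crucial analytic input is that for every nontrivial $\chi$ the function $L(s,\chi)$ is holomorphic and nonzero at $s=1$, so that its contribution stays bounded as $s\to 1^{+}$. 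Dividing by $\log\frac{1}{s-1}$ then yields the Dirichlet density $1/\#H$ for $\{\fP : \Frob_{K/E}(\fP)=\sigma\}$, hence $\#C/\#G$ after undoing the reduction.

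Finally, the assertions about natural density require strengthening $L(1,\chi)\neq 0$ to a zero-free region near $\Re(s)=1$ (for number fields) and then invoking a Tauberian theorem of Ikehara--Wiener type; for a function field $F$ with constant field $\F_q$ the relevant $L$-functions are polynomials in $q^{-s}$ with all zeros on $\Re(s)=1/2$ by Weil's Riemann hypothesis for curves, and the only obstruction to converting the asymptotics of the $q^{-ms}$-coefficients into a natural density is a possible extra zero or pole on the circle $|q^{-s}|=1$, which is precisely ruled out by the hypothesis that the constant field of $F$ is algebraically closed in $K$, i.e.\ that $K/F$ involves no constant-field extension. The genuinely hard ingredient, and the step I expect to be the main obstacle, is the non-vanishing $L(1,\chi)\neq 0$ for nontrivial $\chi$ together with its refinement to a zero-free region: this is the analytic heart of the matter (the same content that powers Dirichlet's theorem on primes in arithmetic progressions), whereas the reduction to the cyclic case is purely formal and the remaining function-field inputs (rationality and the Riemann hypothesis for curves, Tauberian theorems) are citable black boxes. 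For this reason the theorem is recorded here with references to \cite{FJ}, \cite{LS}, \cite{murty} rather than proved in detail.
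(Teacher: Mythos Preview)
The paper does not prove this theorem at all: it is simply stated as a known result with references to \cite{FJ}, \cite{LS}, \cite{murty}, exactly as you yourself anticipate in your closing sentence. Your outline---Deuring-type reduction to a cyclic subextension $K/E=K^{\langle\sigma\rangle}$ via the counting identity for primes of $E$ of residue degree one, followed by the analytic theory of abelian $L$-functions with the nonvanishing $L(1,\chi)\neq 0$ as the crux, and then Tauberian/zero-free-region input (number fields) or Weil's Riemann hypothesis for curves (function fields) to upgrade to natural density---is the standard route and is correct in its broad strokes. There is, however, nothing in the paper to compare it against; the author treats Chebotarev purely as a black box and invokes it in the applications of Section~\ref{applications}.
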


We will use the following easy lemma from \cite{odoni}.

\begin{lemma}[\cite{odoni}, Lemma 4.3] \label{FPP}   Let $\FPP([S_d]^n)$ denote the proportion of elements of $[S_d]^n$ with a fixed point. Then \[\lim_{n\rightarrow \infty}\FPP([S_d]^n) =0.\]
\end{lemma}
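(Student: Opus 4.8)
The plan is to set up a recursion for the fixed-point proportion using the wreath-product structure. Write $G_n = [S_d]^n$ acting on the $n$-th level of the $d$-ary rooted tree, and recall that $G_n = G_{n-1}[S_d]$, so an element of $G_n$ is a tuple $(\pi,\tau_1,\ldots,\tau_{d^{n-1}})$ with $\pi\in G_{n-1}$ and each $\tau_i\in S_d$, acting on level-$n$ vertices (indexed by a level-$(n-1)$ vertex together with a coordinate in $\{1,\dots,d\}$). Let $p_n = \FPP(G_n)$ be the probability that a uniformly random element of $G_n$ fixes at least one level-$n$ vertex. The key observation is that, conditioned on $\pi$, a level-$n$ vertex lying over a level-$(n-1)$ vertex $v$ can be fixed only if $\pi$ fixes $v$; and if $\pi$ fixes $v$ then the restricted action on the $d$ children of $v$ is governed by the single independent uniform factor $\tau_v \in S_d$, which fixes one of those $d$ children with probability $q := \FPP(S_d) = \sum_{j\ge 1}\frac{(-1)^{j+1}}{j!} \to 1 - e^{-1}$ (in any case $q<1$ for $d\ge 2$; for $d=2$ one has $q=1/2$). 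Since the $\tau_v$ are independent across the (at most $d^{n-1}$, but what matters is the random number $\mathrm{Fix}(\pi)$ of) fixed vertices of $\pi$, I get the exact recursion
\[
p_n \;=\; \mathbb{E}_{\pi\in G_{n-1}}\!\left[\,1-(1-q)^{\mathrm{Fix}(\pi)}\,\right],
\]
where $\mathrm{Fix}(\pi)$ is the number of level-$(n-1)$ vertices fixed by $\pi$.

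From here I would bound the right-hand side. Using $1-(1-q)^m \le qm$ for integers $m\ge 0$, we get $p_n \le q\,\mathbb{E}_{\pi}[\mathrm{Fix}(\pi)]$. But $\mathbb{E}_{\pi\in G_{n-1}}[\mathrm{Fix}(\pi)]$ is the expected number of fixed points of a random element of $G_{n-1}$ on a transitive set of size $d^{n-1}$, which by Burnside/orbit-counting equals the number of orbits, namely $1$ (the action is transitive). Hence $p_n \le q < 1$ — which is a uniform bound but not yet convergence to $0$. To upgrade to a limit of $0$, I would instead split the expectation according to whether $\mathrm{Fix}(\pi) = 0$ or $\mathrm{Fix}(\pi)\ge 1$: the event $\mathrm{Fix}(\pi)\ge 1$ is exactly the event that $\pi$ has a fixed level-$(n-1)$ vertex, which has probability $p_{n-1}$, and on this event $1-(1-q)^{\mathrm{Fix}(\pi)} \le 1$, while on the complementary event the integrand vanishes. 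This gives the clean recursion
\[
p_n \;\le\; p_{n-1}\cdot \sup\{\,1-(1-q)^m : m\ge 1\,\} \;=\; p_{n-1},
\]
which again only shows monotonic decrease. The genuine decay comes from combining the two bounds: $p_n = \mathbb{E}[\,(1-(1-q)^{\mathrm{Fix}(\pi)})\mathbf{1}_{\mathrm{Fix}(\pi)\ge1}\,] \le \mathbb{E}[\,q\,\mathrm{Fix}(\pi)\,\mathbf{1}_{\mathrm{Fix}(\pi)\ge 1}\,] = q\,\mathbb{E}[\mathrm{Fix}(\pi)] - q\,\Prob(\mathrm{Fix}(\pi)=0)\cdot 0 = q$, so this alone is not enough either, and the real argument needs the fact that $\mathbb{E}[\mathrm{Fix}(\pi)\mid \mathrm{Fix}(\pi)\ge 1] \to \infty$, or equivalently that a random element of $G_{n-1}$ that does fix something fixes a large set with high probability.

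The cleanest route, which I expect to be the main technical point, is therefore to show $\mathbb{E}_{\pi\in G_{n-1}}[(1-q)^{\mathrm{Fix}(\pi)}] \to 1$, i.e. that $\mathrm{Fix}(\pi)=0$ with probability tending to $1$, which is precisely $p_{n-1}\to 0$ — so the recursion $p_n = 1-\mathbb{E}[(1-q)^{\mathrm{Fix}(\pi)}]$ must be combined with an independent argument that $p_n\to 0$, making the recursion somewhat circular unless handled carefully. The fix is to prove a stronger statement by induction: let $r_n := \mathbb{E}_{\pi\in G_n}[(1-q)^{\mathrm{Fix}(\pi)}]$, so $p_n = 1 - r_n$ and I want $r_n \to 1$. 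Unwinding the wreath structure one more level, $\mathrm{Fix}$ of an element $(\pi,(\tau_v))\in G_n$ equals $\sum_{v:\ \pi v = v}\mathrm{Fix}(\tau_v)$ with the $\mathrm{Fix}(\tau_v)$ i.i.d. copies of $\FPP$-type random variables $X$ on $S_d$ with $\mathbb{E}[z^{X}] =: \phi(z)$ for the $d$-point action, independent of $\pi$; hence $r_{n+1} = \mathbb{E}_{\pi\in G_n}[\phi(1-q)^{\mathrm{Fix}(\pi)}]$ where $\phi(1-q)\in(0,1)$. Iterating, $r_n$ is an average of $\phi(1-q)^{(\text{number of fixed points several levels down})}$, and since the fixed-point set of a random tree automorphism, when nonempty, forms a subtree that typically branches, the exponent blows up off a vanishing set; concretely one shows $p_n \le C\lambda^n$ for some $\lambda<1$ by a second-moment or generating-function estimate on the branching structure of fixed subtrees. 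I would carry this out by analyzing the iterate $g$ of the single-level generating function: setting $\psi(z) = \mathbb{E}[z^{\mathrm{Fix on }S_d}] < 1$ for $z<1$, one gets $1 - p_n \ge \psi^{(n)}(0)$-type lower bounds where $\psi^{(n)}$ denotes $n$-fold composition, and since $\psi$ is a contraction toward its fixed point near which $\psi(z)>z$ fails appropriately... — this comparison with an iterated subcritical branching process is the step I expect to require the most care, and it is exactly the content that makes $\FPP([S_d]^n)\to 0$ rather than merely bounded. Once that branching estimate is in hand, $p_n\to 0$ (indeed geometrically) follows, completing the proof.
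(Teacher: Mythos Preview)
The paper does not prove this lemma; it simply cites Odoni's original paper. Your recursion $p_n = \mathbb{E}_{\pi\in G_{n-1}}\bigl[1-(1-q)^{\Fix(\pi)}\bigr]$ is correct, but the analysis that follows does not close: you rightly note that neither the crude bound $p_n\le q$ nor the monotonicity $p_n\le p_{n-1}$ suffices, and the final paragraph about subcritical branching never produces an actual inequality forcing $p_n\to 0$.

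The missing idea is to use the \emph{other} wreath decomposition. Since wreath products of permutation groups are associative, one also has $[S_d]^n \cong S_d\bigl[[S_d]^{n-1}\bigr]$: an element is $(\sigma,\rho_1,\ldots,\rho_d)$ with $\sigma\in S_d$ and $\rho_i\in[S_d]^{n-1}$, and it has a fixed point iff some $i$ is fixed by $\sigma$ \emph{and} $\rho_i$ has a fixed point. Conditioning on $\sigma$ (which now has a \emph{bounded} number of fixed points) gives the clean scalar recursion
\[
1-p_n \;=\; \phi(1-p_{n-1}),\qquad \phi(z):=\mathbb{E}_{\sigma\in S_d}\bigl[z^{\Fix(\sigma)}\bigr],
\]
a fixed polynomial independent of $n$. (Equivalently, iterating your own generating-function identity yields $1-p_n=\phi^{\circ n}(0)$, which you nearly write at the end but do not exploit.) Now $\phi$ has nonnegative coefficients, $\phi(1)=1$, and $\phi'(1)=\mathbb{E}_{\sigma}[\Fix(\sigma)]=1$ by Burnside; strict convexity (some coefficient with $k\ge 2$ is positive for $d\ge 2$) then forces $\phi(z)>z$ for every $z\in[0,1)$. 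Hence $a_n:=1-p_n$ is increasing in $[0,1]$ and converges to the unique fixed point $a=1$, i.e.\ $p_n\to 0$. This is Odoni's argument; no branching-process or second-moment estimate is needed.
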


Since we assume $k$ is a global field, $k$ is Hilbertian and we can apply Lemma \ref{hilbertian}. 
We will make use of the following lemma in the proof of Theorem \ref{application1} to argue that if $k$ is a function field then for any $n$ there exists a Hilbert set $\cH$ in $k^{d}$ such that for all  $f(x)={x^d+c_{d-1}x^{d-1}+\ldots+c_0}$ with $(c_{d-1},\ldots,c_0)\in \cH$, $\Gal(f^{n}(x)/k)\cong [S_d]^{n}$ and the splitting field of $f^n(x)$ over $k$ is a geometric extension.

\begin{lemma}[\cite{FJ}, Corollary 12.2.3] \label{hilbertsubset} Let $L$ be a finite separable extension of a field $K$. Then every Hilbert subset of $L^r$ contains a Hilbert subset of $K^r$.
\end{lemma}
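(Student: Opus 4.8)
The plan is to descend the data defining a Hilbert subset of $L^r$ to a Hilbert subset of $K^r$ by means of a primitive element for $L/K$; this is the argument underlying \cite[Corollary 12.2.3]{FJ}.

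First I would reduce to a single defining polynomial. A finite intersection of Hilbert subsets of $K^r$ is again a Hilbert subset of $K^r$ (concatenate the lists of polynomials and multiply the nonvanishing factors), so it suffices to show that every set of the form $\mathcal{H}_L(f;g)$, with $f(\textbf{t},x)\in L(\textbf{t})[x]$ irreducible in $x$ and $0\neq g\in L[\textbf{t}]$, contains a Hilbert subset of $K^r$. Clearing denominators and applying the standard monic reduction (replace $f$ by $c^{\deg_x f-1}f(\textbf{t},x/c)$, where $c\in L[\textbf{t}]$ is the leading $x$-coefficient), I may assume $f\in L[\textbf{t}][x]$ is $x$-monic; the auxiliary condition $c(\textbf{a})\neq 0$ needed to recover irreducibility of $f(\textbf{a},x)$ from that of its monic form will be folded into a nonvanishing polynomial below.

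Next, since $L/K$ is finite and separable, fix a primitive element $\xi$ with minimal polynomial $p(y)\in K[y]$, which is irreducible and separable of degree $n=[L:K]$, and expand in the $K$-basis $1,\xi,\dots,\xi^{n-1}$: write $f(\textbf{t},x)=\widetilde f(\textbf{t},\xi,x)$ for some $\widetilde f\in K[\textbf{t},y,x]$, and $g=\sum_{l=0}^{n-1}g_l(\textbf{t})\xi^l$ with $g_l\in K[\textbf{t}]$ and $g_{l_0}\neq 0$ for some $l_0$. Then $L(\textbf{t})[x]/(f)\cong K(\textbf{t})[y,x]/(p(y),\widetilde f)$ is a field, finite and separable over $K(\textbf{t})$ of degree $n\deg_x f$; by the primitive element theorem it equals $K(\textbf{t})(z)$ for some $z$, and I let $G(\textbf{t},z)\in K[\textbf{t},z]$ be a $z$-monic integral model of the minimal polynomial of $z$ over $K(\textbf{t})$, of $z$-degree $n\deg_x f$. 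Finally choose $0\neq g^*\in K[\textbf{t}]$ equal to the product of $g_{l_0}$, the leading coefficients introduced by the monic reduction, and a further polynomial that cuts out the proper Zariski-closed set of ``bad'' specializations, so that for $\textbf{a}\in K^r$ with $g^*(\textbf{a})\neq 0$ the isomorphism $K(\textbf{t})[z]/(G)\cong K(\textbf{t})[y,x]/(p(y),\widetilde f)$ specializes to a $K$-algebra isomorphism $K[z]/(G(\textbf{a},z))\cong K[y,x]/(p(y),\widetilde f(\textbf{a},y,x))$. Producing this last polynomial is a standard Bertini--Noether / good-reduction argument, and it is the only step that is not purely formal; in \cite{FJ} it is absorbed into the theory of fields of definition of specializations, and I would cite it rather than reprove it.

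With these choices, the claim is $\mathcal{H}_K(G;g^*)\subseteq\mathcal{H}_L(f;g)$, which proves the lemma since the left-hand side is a Hilbert subset of $K^r$. Indeed, let $\textbf{a}\in\mathcal{H}_K(G;g^*)$. Then $G(\textbf{a},z)$ is irreducible over $K$, so $K[z]/(G(\textbf{a},z))$ is a field of degree $n\deg_x f$ over $K$; by the choice of $g^*$ it is isomorphic to $K[y,x]/(p(y),\widetilde f(\textbf{a},y,x))$, so the latter is a field. Reducing modulo $p(y)$ first, and using that $p$ is irreducible over $K$ so that $K[y]/(p(y))=L$ with $y\mapsto\xi$, we get that $L[x]/(f(\textbf{a},x))$ is a field, i.e. $f(\textbf{a},x)$ is irreducible over $L$; and $g(\textbf{a})=\sum_l g_l(\textbf{a})\xi^l\neq 0$ because $g_{l_0}(\textbf{a})\neq 0$ and $1,\xi,\dots,\xi^{n-1}$ are $K$-linearly independent. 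Hence $\textbf{a}\in\mathcal{H}_L(f;g)$. The main obstacle is the good-specialization step noted above; everything else is a formal manipulation of the definition of a Hilbert subset.
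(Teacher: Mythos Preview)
The paper does not supply its own proof of this lemma: it is stated with a citation to \cite[Corollary~12.2.3]{FJ} and then used as a black box in the proof of Theorem~\ref{application1}. So there is nothing in the paper to compare your argument against beyond the reference itself.

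Your sketch is the standard descent argument and is essentially the one in Fried--Jarden: reduce to a single monic $f$ and a nonvanishing $g$, expand over a primitive element $\xi$ for $L/K$, pass to a primitive element $z$ for the field $L(\textbf{t})[x]/(f)$ over $K(\textbf{t})$, and then invoke a Bertini--Noether type good-specialization statement to guarantee that irreducibility of $G(\textbf{a},z)$ over $K$ forces irreducibility of $f(\textbf{a},x)$ over $L$. You are right that the only nontrivial step is the existence of the polynomial $g^*$ controlling good specialization of the isomorphism; in \cite{FJ} this is exactly what is packaged into their machinery, and citing it is appropriate here. The remaining manipulations (the reduction to one $f$, the handling of $g$ via its coordinates $g_l$, and the final field-isomorphism count) are correct as written.
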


\begin{theorem}\label{application1} If $k$ is a global field $d\geq 2$, with $(d,\ch k)\neq(2,2)$ then for all $\epsilon>0$, there is a Hilbert subset $\cH$ of $k^{d}$, such that for all $f(x)=x^d+c_{d-1}x^{d-1}+\ldots+c_0$ with $(c_{d-1},\ldots,c_0)\in \cH$, $\delta_D(P_f(a_0))=\delta_N(P_f(a_0))<\epsilon$, for any $a_0\in k$.
\end{theorem}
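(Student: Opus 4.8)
The goal is to deduce Theorem \ref{application1} from Theorem \ref{generic} (via the Hilbertian machinery of Lemma \ref{hilbertian} and Lemma \ref{hilbertsubset}) together with the Chebotarev Density Theorem \ref{CDT} and the fixed-point-proportion estimate Lemma \ref{FPP}. First I would choose $n$ large enough that $\FPP([S_d]^n) < \epsilon$, which is possible by Lemma \ref{FPP}. Next I would produce a Hilbert subset $\mathcal{H}$ of $k^d$ such that for all monic $f$ with coefficient vector in $\mathcal{H}$ we have $\Gal(f^n(x)/k) \cong [S_d]^n$; this comes from applying Lemma \ref{hilbertian} to the (monic, irreducible, separable — note $(d,\ch k)\neq(2,2)$ is exactly what makes $\fG^n$ separable over $k(\mathbf{s})$) polynomial $\fG^n(x)$ over $k(\mathbf{s})$, using that $\Gal(\fG^n(x)/k(\mathbf{s})) \cong [S_d]^n$ by Theorem \ref{generic}. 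In the function field case one additionally wants the splitting field of $f^n(x)$ to be a geometric extension of $k$ (so the natural density version of Chebotarev applies); here I would pass to the constant field extension and apply Lemma \ref{hilbertsubset} to descend the Hilbert set from an extension $L$ of $k$ (containing all the relevant constants) back to $k$, so that for $f$ with coefficients in $\mathcal{H}$ the constant field of the splitting field of $f^n(x)$ is still $k$.

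**Reducing the density of $P_f(a_0)$ to a Chebotarev computation.** Fix such an $f$ and fix any $a_0 \in k$. The key observation (this is Odoni's argument, \cite{odoni} Lemma 9.1) is that a prime $\fp$ lies in $P_f(a_0)$ only if, for \emph{every} $m$, $\fp$ is not inert-free in a suitable sense — more precisely, for all but finitely many $\fp$, if $\fp \in P_f(a_0)$ then $a_0 \bmod \fp$ is a root of $f^m(x) \bmod \fp$ for some $m \le n$, equivalently $f^n(x) - a_0$ has a root mod $\fp$. By the standard dictionary between roots mod $\fp$ and cycle structure of Frobenius (the remark following Lemma \ref{inertia group}), for all but finitely many $\fp$ the reduction $f^n(x) - a_0 \bmod \fp$ has a root in the residue field if and only if $\Frob_\fp$, acting on the $d^n$ roots of $f^n(x) - a_0$ through $\Gal \big(f^n(x)-a_0 / k\big) \hookrightarrow [S_d]^n$, has a fixed point. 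Here I should note that $\Gal(f^n(x) - a_0/k)$ is a quotient/subgroup-compatible image inside $\Gal(f^n(x)/k) \cong [S_d]^n$, and since the action on roots of $f^n(x)-a_0$ is still an action on (a subtree isomorphic to) the $d$-ary tree of depth $n$, the proportion of elements with a fixed point is at most $\FPP([S_d]^n)$. (One has to be slightly careful that $f^n(x)-a_0$ is separable and that the full group is still $[S_d]^n$ — this follows because adjoining $a_0$ is a specialization $t \mapsto a_0$ of the geometric picture, but one only needs the \emph{upper} bound $\FPP \le \FPP([S_d]^n)$, which is automatic since any subgroup of $[S_d]^n$ has fixed-point proportion at most that of $[S_d]^n$ on any quotient set — actually the cleanest route is: the set of $\fp$ with $f^n(x)-a_0$ having a root mod $\fp$ has density equal to the proportion of $\Gal(f^n(x)-a_0/k)$ fixing a root, and this group embeds in $[S_d]^n$ so that proportion is $\le \FPP([S_d]^n) < \epsilon$.)

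**Finishing with Chebotarev.** Apply Theorem \ref{CDT} to the Galois extension cut out by $f^n(x) - a_0$ over $k$: the set of primes $\fp$ for which $\Frob_\fp$ fixes a root has Dirichlet density equal to the proportion of the Galois group fixing a root, which is at most $\FPP([S_d]^n) < \epsilon$; and since (by our choice of $\mathcal{H}$ via Lemma \ref{hilbertsubset} in the function field case, or automatically in the number field case) the constant field of the splitting field is $k$, the natural density exists and equals the Dirichlet density. Since $P_f(a_0)$ is contained, up to finitely many primes (those dividing the leading coefficient, the relevant discriminants, or at which reduction is bad), in this Chebotarev set, we get $\delta_D(P_f(a_0)) = \delta_N(P_f(a_0)) \le \FPP([S_d]^n) < \epsilon$, uniformly in $a_0 \in k$.

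**Main obstacle.** The delicate point is the function-field case: ensuring that after specialization the splitting field of $f^n(x)$ (and of $f^n(x) - a_0$) remains a \emph{geometric} extension of $k$ so that the natural-density half of Chebotarev is available. This is precisely why Lemma \ref{hilbertsubset} is invoked — one builds the Hilbert set over a constant field extension $L$ large enough to "absorb" any constants that could appear, then descends it to $k$; verifying that this does not disturb the Galois group (it cannot increase under specialization, and Lemma \ref{hilbertian} keeps it at $[S_d]^n$) and that it genuinely forces geometricity is the part requiring care. The rest — the fixed-point/root-mod-$\fp$ translation and the Chebotarev bookkeeping — is routine and follows \cite{odoni}.
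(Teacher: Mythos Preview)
Your overall plan---choose $n$ with $\FPP([S_d]^n)<\epsilon$, build $\mathcal H$ via Lemma~\ref{hilbertian} (and Lemma~\ref{hilbertsubset} in the function-field case to force geometricity), then finish with Chebotarev---matches the paper exactly, and you have correctly identified the function-field geometricity issue as the one subtle point.

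However, the middle section (``Reducing the density of $P_f(a_0)$ to a Chebotarev computation'') contains a genuine gap. You apply Chebotarev to the polynomial $f^n(x)-a_0$, and you need two assertions that both fail:
\begin{itemize}
\item The implication ``$\fp\mid f^m(a_0)$ for some $m$'' $\Longrightarrow$ ``$f^n(x)-a_0$ has a root mod $\fp$'' is the wrong direction. The hypothesis says the \emph{forward} orbit of $a_0$ hits $0$ mod $\fp$; the conclusion says $a_0$ lies in the mod-$\fp$ \emph{image} of $f^n$. There is no reason for the latter to follow from the former.
\item Even granting that, you then need $\FPP\big(\Gal(f^n(x)-a_0/k)\big)\leq\FPP([S_d]^n)$, arguing that the left-hand group embeds in $[S_d]^n$. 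But the fixed-point proportion of a subgroup can be \emph{larger} than that of the ambient group: the trivial subgroup has $\FPP=1$, and more generally any intransitive subgroup (which is exactly what you get when $f^n(x)-a_0$ is reducible) can have $\FPP$ as large as $1$. So this inequality is false.
\end{itemize}
Moreover, the Hilbert set $\mathcal H$ was engineered so that $\Gal(f^n(x)/k)\cong[S_d]^n$ and its splitting field is geometric; nothing was arranged for $f^n(x)-a_0$ with varying $a_0$.

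The fix is to apply Chebotarev to $f^n(x)$ itself, exactly the polynomial for which $\mathcal H$ guarantees the Galois group is the full $[S_d]^n$. The correct implication is: if $\fp\mid f^m(a_0)$ with $m\geq n$, then $f^n\big(f^{m-n}(a_0)\big)\equiv 0\pmod\fp$, so $f^n(x)$ has the root $f^{m-n}(a_0)$ mod $\fp$, hence $\Frob_\fp$ acting through $\Gal(f^n(x)/k)=[S_d]^n$ has a fixed point. Chebotarev then gives density exactly $\FPP([S_d]^n)<\epsilon$ for such primes, with no subgroup issue. The finitely many primes with $\fp\mid f^m(a_0)$ for some $m<n$, together with primes of bad reduction and primes where $v_\fp(f^m(a_0))<0$, are thrown away separately. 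This is the argument in the paper.
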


The arguments provided here are essentially the same as those given in \cite{odoni} and \cite{Jones2}. 

\begin{proof}

	By Lemma \ref{FPP}, we can choose $n_0$ so that $\FPP([S_d]^{n_0})<\epsilon$. First suppose $k$ is a number field, 
	by Theorem \ref{generic}, $\Gal(\fG^{n_0}(x)/k(\textbf{s}))\cong [S_d]^{n_0}$. Then, by Lemma \ref{hilbertian}, there is a Hilbert subset of $k^{d}$ such that for all $f(x)={x^d+c_{d-1}x^{d-1}+\ldots+c_0}$ with $(c_{d-1},\ldots,c_0)$ in this set, $\Gal(f^{n_0}(x)/k)\cong [S_d]^{n_0}$. Define $\cH$ to be this subset. 
	
	On the other hand, if $k$ is a global function field with full field of constants $\F_q$, consider the constant field extension $k'=\F_{q^{d^{n_0}!}}\cdot k$ of $k$, we chose $k'$ in this way so that any extension of $\F_q$ contained in the splitting field of a specialization of $\fG^{n_0}(x)$ to $k[x]$ must be contained in $k'$. By Theorem \ref{generic}, $\Gal(\fG^{n_0}(x)/k'(\textbf{s}))\cong [S_d]^{n_0}$. So by Lemma \ref{hilbertian}, there is a Hilbert subset $\cH'$ of $(k')^{d}$ such that for all $f(x)={x^d+c_{d-1}x^{d-1}+\ldots+c_0}$ with $(c_{d-1},\ldots,c_0)$ in this set, $\Gal(f^{n_0}(x)/k')\cong [S_d]^{n_0}$. Now, by Lemma \ref{hilbertsubset} there is a Hilbert subset $\cH$ of $k^{d}$ such that $\cH\subset\cH'$. So for all $f(x)={x^d+c_{d-1}x^{d-1}+\ldots+c_0}$ with $(c_{d-1},\ldots,c_0)\in\cH\subset\cH'$ we have $\Gal(f^{n_0}(x)/k)\cong\Gal(f^{n_0}(x)/k')\cong [S_d]^{n_0}$. Also, if $L$ is the splitting field of $f^{n_0}(x)$ over $k$ then $L\cap\overline{k}=k$, as otherwise $L\cap k'\supsetneq k$ which would imply $[L:k]>[L:k']$ and $\Gal(f^{n_0}(x)/k)\supsetneq\Gal(f^{n_0}(x)/k')\cong[S_d]^n$ which is a contradiction. 
	
	With $\cH$ as above, 
	let $f(x)={x^d+c_{d-1}x^{d-1}+\ldots +c_0}$ for $(c_{d-1},\ldots,c_0)\in \cH$ and let $a_0 \in k$. Now, let $P_f(a_0)$ denote the set of primes of $k$ such that $v_\fp(f^n(a_0))\neq 0$ for some $n$. We split $P_f(a_0)$ into three sets, \[P_1=\{\fp:v_\fp(f^n(a_0))<0 \text{ for some } n\in\bN\},\] \[P_2=\{\fp: \fp|f(a_0)\ldots f^{n_0-1}(a_0)\Delta(f^{n_0}(x))\},\] \[P_3=\{\fp:\fp\nmid \Delta(f^{n_0}(x)), \fp|f^m(a_0) \text { for some } m\geq {n_0}\}.\] 
	
	The set $P_1$ consists of the primes for which $v_\fp(a_0)<0$ or $v_\fp(c_i)<0$ for some $i$, so clearly $P_1$ is a finite set. The set $P_2$ is also clearly finite. 
	
	Let $K_{n_0}$ denote the splitting field of $f^{n_0}(x)$ then, if $\fp \in P_3$, $\fp \nmid \Delta(f^{n_0}(x))$ so $\fp$ does not ramify in $K_{n_0}$ and the Frobenius conjugacy class $\Frob\left(\frac{K_{n_0}/k}{\fp}\right)$ is defined.  Note that if $\fp\in P_3$ then $\fp$ divides $f^m(a_0)=f^{n_0}(f^{m-n_0}(a_0))$ for some $m\geq {n_0}$. So, if $\fp\in P_3$ then $f^{n_0}(x)$ has a root mod $\fp$, which holds if and only if $f^{n_0}(x)$ has a linear factor mod $\fp$. This implies that $\fp$ has at least one prime ideal factor of residue degree one. Thus, $\Frob\left(\frac{K_{n_0}/k}{\fp}\right)$ fixes some root of $f^{n_0}(x)$. So we see that the union of the Frobenius conjugacy classes $\Frob\left(\frac{K_{n_0}/k}{\fp}\right)$ for $\fp \in P_3$ is contained in the set of elements of $[S_d]^{n_0}$ fixing at least one point. The proportion of such elements is $\FPP([S_d]^{n_0})$ defined above. Applying the Chebotarev Density Theorem (Lemma \ref{CDT})
	we see that \[\delta_N(P_3)\leq\FPP([S_d]^{n_0})<\epsilon.\]
	Then since $P_1$ and $P_2$ are finite, \[\delta_D(P_f(a_0))=\delta_N(P_f(a_0))=\delta_N(P_3)\leq\FPP([S_d]^{n_0})<\epsilon.\]
	
\end{proof}

\begin{remark} Note, the proof of Theorem \ref{application1} can be simplified if we only want to prove the result for the Dirichlet density, since in this case we can define the set $\cH$ for function fields in the same way we did for number fields.   \end{remark}

\subsection{Factorizations of Iterates over $\mathbb{F}_q$}\label{application}

Let $F$ be any field and let $f(x) \in F[x]$ be squarefree with degree $m\geq 1$. Recall, we say that a permutation in $S_m$ has \textit{cycle pattern} $(1)^{r_1}\ldots(m)^{r_m}$ if, when it is written as a product of disjoint cycles, it consists of $r_i$ cycles of length $i$ for each $1\leq i\leq m$. Note, that each $r_i$ is a non-negative integer and $\sum_i ir_i=m$. Two permutations in $S_m$ are conjugate if and only if they have the same cycle pattern.  We will discuss the concept of cycle patterns of squarefree polynomials due to Cohen \cite{Cohen2}.

\begin{definition} Let $\pi=(1)^{r_1}\ldots(m)^{r_m}$ be a cycle pattern in $S_m$. We say that a squarefree polynomial $f(x)$ of degree $m$ in $F[x]$ has \textit{cycle pattern} $\pi$ if $f(x)$ has exactly $r_i$ irreducible factors of degree $i$ for all $1\leq i\leq m$. 
\end{definition}

We have seen that the wreath power $[S_d]^n$ has a natural action on the $d$-ary rooted tree up to the $n$-th level. Labeling the branches at the top of the tree $1,\dots, d^n$, we can view $[S_d]^n$ as a subgroup of $S_{d^n}$. More precisely, we can define an injection $\iota:[S_d]^n \lra S_{d^n}$. The map $\iota$ depends only on the choice of the labeling, relabeling the tree will replace $\iota([S_d]^n)$ with a subgroup of $S_{d^n}$ that is $S_{d^n}$ conjugate to  $\iota([S_d]^n)$. Hence, for any conjugacy class $C$ in $S_{d^n}$, the size of $C\cap\iota([S_d]^n)$ is independent of the choice of $\iota$.  

Fix any $\iota$ as above. Let $\pi$ be a cycle pattern in $S_{d^n}$, and let $C$ be the conjugacy class of $S_{d^n}$ consisting of permutations with cycle pattern $\pi$. Define
\[\rho(\pi)=\#(C\cap\iota([S_d]^n))/\#[S_d]^n.\]
Then $\rho(\pi)$ is a nonnegative rational number and $\sum_\pi \rho(\pi)=1$.

Now let $\F_q$ be the finite field of order $q$ and characteristic $p$, let $b \in \F_q^*$ and let $\pi$ be a cycle pattern in $S_{d^n}$. Suppose $d \geq 2, n \geq 1$, and $(d,p)\neq (2,2)$.  Define
$A(q,b,d,n,\pi)$ to be the set of all $f(x)\in \F_q[x]$ such that 
\begin{itemize}
	\item $\deg f(x)=d$;
	\item $f(x)$ has leading coefficient $b\neq 0$;
	\item $f^n(x)$ is squarefree;
	\item $f^n(x)$ has cycle pattern $\pi$.
\end{itemize}

\begin{theorem}\label{application2}
	There is an $M=M(d,n)>0$ in $\bR$ and $q_0(d,n)\geq 2$  in $\bN$ such that 
	\[|\#A(q,b,d,n,\pi)-q^d\rho(\pi)|\leq Mq^{d-\frac{1}{2}}\]
	whenever $(d,\ch \F_q)\neq (2,2)$, $d\geq 2$, $n\geq 1$, $b\in\F_q^*$, $\pi$ is a cycle pattern in $S_{d^n}$ and $q\geq q_0$. 
\end{theorem}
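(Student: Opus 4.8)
The plan is to count $A(q,b,d,n,\pi)$ by parametrizing degree-$d$ polynomials with leading coefficient $b$ as an affine space $\F_q^d$ (the $d$ lower coefficients), then to identify, inside this parameter space, the locus where $f^n(x)$ is squarefree and has cycle pattern $\pi$, and finally to estimate its size via an equidistribution/Chebotarev-type argument over the function field $\F_q(\mathbf{s})$. Concretely, one works with the generic monic polynomial $\fG(x)=x^d+s_{d-1}x^{d-1}+\cdots+s_0$ (after absorbing $b$ by the substitution $f(x)=b\,\widetilde f(x/b^{1/(d-1)})$-type rescaling, or more cleanly by working with leading coefficient $b$ throughout and noting the combinatorics of factorization is unaffected), and applies Theorem \ref{generic}: for $(d,\ch\F_q)\neq(2,2)$ we have $\Gal(\fG^n(x)/\overline{\F_q}(\mathbf{s}))\cong[S_d]^n$, so the splitting field of $\fG^n(x)$ over $\F_q(\mathbf{s})$ is a geometric extension with Galois group $[S_d]^n$ (here $\overline{\F_q}$ is algebraically closed in it).

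The next step is a geometric Chebotarev statement. Let $X=\mathbb{A}^d_{\F_q}$ be the coefficient space and let $Y\to X$ be the (ramified) cover corresponding to adjoining a root, with $\widetilde Y\to X$ its Galois closure, so that $\Gal(\widetilde Y/X)$ over the generic point is $[S_d]^n\hookrightarrow S_{d^n}$ via $\iota$. For a parameter point $\mathbf{a}\in\F_q^d=X(\F_q)$ lying outside the discriminant locus $\Delta$ (so $f_{\mathbf{a}}^n(x)$ is squarefree), the factorization type of $f_{\mathbf{a}}^n(x)$ over $\F_q$ is exactly the cycle pattern of the Frobenius conjugacy class $\mathrm{Frob}_{\mathbf{a}}$ acting on the $d^n$ geometric roots — this is the standard translation between factorization of a specialized polynomial and the Frobenius at that point. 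Therefore $\#A(q,b,d,n,\pi)$ equals the number of $\mathbf{a}\in X(\F_q)\setminus\Delta(\F_q)$ with $\mathrm{Frob}_{\mathbf{a}}$ in the conjugacy class $C_\pi\cap\iota([S_d]^n)$. One then invokes the function-field Chebotarev/Lang-Weil–type equidistribution estimate (Grothendieck–Lefschetz trace formula applied to the Galois cover, as in Cohen \cite{Cohen2}): the number of such points is $\dfrac{\#(C_\pi\cap\iota([S_d]^n))}{\#[S_d]^n}\,q^d + O(q^{d-1/2})$, where the implied constant depends only on the Betti numbers of the cover, hence only on $d$ and $n$. Since $\dfrac{\#(C_\pi\cap\iota([S_d]^n))}{\#[S_d]^n}=\rho(\pi)$ by definition, and the discriminant locus contributes $O(q^{d-1})$ points, we obtain $|\#A(q,b,d,n,\pi)-q^d\rho(\pi)|\le Mq^{d-1/2}$ for a constant $M=M(d,n)$ once $q\ge q_0(d,n)$; the threshold $q_0$ and the uniformity in $b$ come from the fact that all of these covers (for the various $b\in\F_q^*$) are pulled back from a single cover over $\mathbb{Z}[1/N]$ for suitable $N$, so the geometry — and thus $M$ — is uniform, with $q_0$ chosen large enough to avoid the finitely many bad characteristics.

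The main obstacle is making the equidistribution estimate genuinely \emph{uniform} in $q$ and $b$ with an explicit power saving $q^{d-1/2}$ rather than $q^{d-1}\cdot(\text{something})$ or $q^{d-o(1)}$. This requires: (i) spreading the Galois cover $\widetilde Y\to X$ over a base $\Spec \mathbb{Z}[1/N]$ so that for all but finitely many $p$ the reduction mod $p$ remains a Galois cover with group $[S_d]^n$ and with $\overline{\F_p}$ algebraically closed in the total space — this is exactly where Theorem \ref{generic} in positive characteristic is essential, since it guarantees the geometric monodromy does not drop in bad characteristic (other than $(2,2)$); (ii) bounding the sum of Betti numbers of the cover independently of $q$ (a Bombieri/Katz-type bound, or a direct argument since the cover is defined by explicit equations of bounded degree in $d,n$); and (iii) handling the leading coefficient $b$: one checks that rescaling $x\mapsto \lambda x$ and $f\mapsto \lambda^{-1}f\circ(\lambda x)$-type conjugations show $A(q,b,d,n,\pi)$ has the same cardinality for all $b$ in a given $(d-1)$-st power class, and then that the main term and error term are insensitive to which class, so one constant $M$ works for all $b\in\F_q^*$. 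Handling the finitely many excluded characteristics and the requirement $q\ge q_0$ is then a matter of absorbing them into the constants. I expect step (i)–(ii) to be the technical heart; the combinatorial identification $\rho(\pi)=\#(C_\pi\cap\iota([S_d]^n))/\#[S_d]^n$ and the Frobenius–factorization dictionary are routine.
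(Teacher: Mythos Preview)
Your approach is essentially correct but genuinely different from the paper's. You apply an effective Chebotarev/Lang--Weil estimate directly over the $d$-dimensional parameter space $\mathbb{A}^d$ of coefficients, using Theorem~\ref{generic} to identify the geometric monodromy of the generic iterate as $[S_d]^n$. The paper instead reduces to a \emph{one}-dimensional Chebotarev theorem: for each individual $f$ in the Zariski-open set $H(d,n,\overline{\F_q})$ from Theorem~\ref{galois group}, the extension given by $f^n(x)-t$ over $\F_q(t)$ is geometric with group $[S_d]^n$, so effective Chebotarev for curves counts the $\alpha\in\F_q$ with $f^n(x)-\alpha$ of cycle pattern $\pi$ as $q\rho(\pi)+O(q^{1/2})$. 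The key trick is then the averaging identity: the substitution $f(x)\mapsto f(x+\alpha)-\alpha$ is a bijection on the set of degree-$d$ polynomials with leading coefficient $b$, and its $n$-th iterate is $f^n(x+\alpha)-\alpha$, which has the same factorization type as $f^n(x)-\alpha$. Summing over $\alpha$ converts the curve count into the desired count of $f$'s.

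What each approach buys: the paper's route needs only the Weil bounds for curves and an elementary Riemann--Hurwitz genus estimate to control the implied constant, at the cost of the extra combinatorial averaging step and the coset argument for handling all $b\in\F_q^*$ (which is where $q_0$ enters). Your route is conceptually cleaner and more direct, but requires Deligne-strength input (bounds on Frobenius eigenvalues on higher cohomology, or equivalently Lang--Weil applied to the twists of the cover) and a Betti-number bound uniform in $q$; you correctly flag this as the technical heart. Your treatment of the leading coefficient $b$ is a bit loose: the rescaling $f\mapsto\lambda^{-1}f(\lambda x)$ only moves $b$ within a $(d-1)$-st power coset of $\F_q^*$, so you still need to argue (as the paper does, or via the observation that the geometric monodromy over $\overline{\F_q}(\mathbf{c})$ is already $[S_d]^n$ for each fixed $b$, forcing the arithmetic monodromy to coincide) that the cover for each $b$ separately is geometrically irreducible with the right group.
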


\begin{proof} Let $\Omega(d,n,\overline{\F}_q)$ be the subset of $\cP_d(\overline{\F}_q)$ consisting of those $f(x)$ such that 
	\begin{itemize}
		\item $\deg(f(x)) =d$;
		\item $f^n(x)-t$ is $x$-separable over $\overline{\F}_q(t)$;
		\item $\Gal(f^n(x)-t/\overline{\F}_q(t)) \cong [S_d]^n$
	\end{itemize}

	Let $B(q,b,d,n)$ be the set of all $f(x) \in \F_q[x]$, with leading coefficient $b$ such that $f(x) \in \Omega(d,n,\overline{\F}_q)$. We first find an estimate for $\#B(q,b,d,n)$. By Theorem \ref{galois group}, $\Omega(d,n,\overline{\F}_q)$ contains a Zariski-open subset, namely $H(d,n,\overline{\F}_q)$. A more careful examination of the proof of Lemma \ref{Zariski} actually shows that there is a nonzero polynomial $\Theta(u_0,u_1,\ldots,u_d)$ with coefficients in $\F_p$ where $p=\ch \F_q$ and the total degree of $\Theta$ is bounded by some constant $C$ depending only on $d,n$, such that if $\Theta(a_0,a_1,\ldots,a_d)\neq 0$ then $f(x)=a_dx^d+a_{d-1}x^{d-1}+\ldots+a_0 \in H(d,n,\overline{\F}_q)$.
	
	The number of distinct $\beta \neq 0$ in $\overline{\F}_q$ such that $(u_d-\beta)$ divides $\Theta(u_0,\ldots,u_d)$ in $\overline{\F}_q$ is clearly bounded above by $C$. Thus, there is a subset $S$ of $\F_q^*$ with $\#S \geq (q-1)-C$ such that $\Theta(u_0,\ldots,u_{d-1},s)$ is not the zero polynomial whenever $s\in S$. 
	
	By a standard number theory argument, there is a constant $D$, depending only on $d,n$, such that for each $s \in S$ the number of $(a_0,\ldots,a_{d-1})$ in $(\F_q)^d$ for which $\Theta(a_0,\ldots,a_{d-1},s)=0$ is bounded above by $Dq^{d-1}$.  Thus, for each $s\in S$, \[\#B(q,s,d,n)\geq q^d-Dq^{d-1}.\] 
	
	We will now show that for sufficiently large $q$, the above estimate holds for all $b$ in $\F_q^*$. Let $s \in S$, $f(x) \in B(q,s,d,n)$, and $c\in \F_q^*$. It is clear that if $f(x) \in B(q,s,d,n)$ then  $c^{-1}f(cx) \in B(q,sc^{d-1},d,n)$, so the map $f(x) \mapsto c^{-1}f(cx)$ injects $B(q,s,d,n)$ into $B(q,sc^{d-1},d,n)$. Thus,
	\[\#B(q,sc^{d-1},d,n)\geq \#B(q,s,d,n).\]
	
	Now, let $H$  be the subgroup of all $d-1$ powers in $\F_q^*$ and let $e=\gcd(q-1,d-1)$, then $\#H =(q-1)/e$. Consider the set $SH\subseteq \F_q^*$, the union of all the cosets of $H$ containing elements of $S$. We will show that for sufficiently large $q$, $SH=\F_q^*$ and hence 
	
	\[\#B(q,b,d,n)\geq q^d-Dq^{d-1}\]
	for all $b\in \F_q^*$.
	
	Let $r$ be the number of distinct cosets in $SH$. Note $\#SH =r\frac{q-1}{e}$, so clearly, $r\leq e$. On the other hand, $r\#H=\#SH \geq \#S \geq (q-1)-C$. So $r \geq \frac{(q-1)-C}{\#H}=e-\frac{Ce}{q-1}$. Thus, there is some $q_0=q_0(d,n)$ such that $r=e$ for all $q\geq q_0$.

	Now, since clearly $\#B(q,b,d,n)\leq q^d$, we get the estimate 
	\begin{equation} \label{B}
	\#B(q,b,d,n)=q^d+O(q^{d-1}), \text{ for all } b \in \F_q^*,
	\end{equation}
	here the implied constant in the above equation depends only on $d$ and $n$. 
	
	Fix $b\in \F_q^*$ and $f(x) \in B(q,b,d,n)$. Then by assumption we have $\Gal(f^n(x)-t/\overline{\F}_q(t))\cong [S_d]^n$. It follows that 
	$\Gal(f^n(x)-t/{\F_q}(t))\cong [S_d]^n$ as well. Thus, the splitting field $L$ of $f^n(x)-t$ over $\F_q(t)$ is a geometric extension, that is, $L\cap \overline{\F}_q=\F_q$ and there is no extension of the constant field.
	
	Let $\pi$ be any cycle pattern in $[S_d]^n$ and let $C$ be the union of the corresponding conjugacy classes in $[S_d]^n$. Let $\alpha \in \F_q$, then $f^n(x)-\alpha$ has cycle pattern $\pi$ if and only if $\Frob\left(\frac{L/\F_q(t)}{t-\alpha}\right)$ has cycle pattern $\pi$. Applying an effective version of the Chebotarev Density Theorem for geometric function field extensions (see \cite[Proposition A.3]{CO} or \cite{murty}), we see that the number of $\alpha \in \F_q$ such that
	$f^n(x)-\alpha$ is squarefree with cycle pattern $\pi$ is 
	\begin{equation}\label{cheb}
	q\frac{\#C}{\#[S_d]^n}+O(q^{\frac{1}{2}})=q\rho(\pi)+O(q^{\frac{1}{2}}).
	\end{equation}
	Here the implied constant above depends only on $d,n$, and the genus of $L$. Since the degree of the different $D_{L/\F_q(t)}$ can be bounded above by a constant depending only on $d,n$, the Riemann-Hurwitz genus formula implies that the same is true for the genus of $L$. Hence, the implied constant in equation \ref{cheb} depends only on $d$ and $n$.
	
	Now treat $q,b,d,n$ as fixed and to shorten notation write $A(\pi)=A(q,b,d,n,\pi)$ and $B=B(q,b,d,n)$. Let $$A=\{f(x)\in \F_q[x]: \deg f(x)=d \text{ and } f(x) \text{ has leading coefficient } b\}.$$  For $\alpha \in \F_q$, let $D(\alpha, \pi)=\{f(x) \in A: f(x+\alpha)-\alpha \in A(\pi)\}$. Then 
	
	\begin{equation}\label{D}
	\sum_{\alpha \in \F_q} \#D(\alpha, \pi)=q\#A(\pi).
	\end{equation}

	We will find an estimate for $\#D(\alpha, \pi)$ and hence for $A(\pi)$ by examining the set $E(\alpha, \pi)=B\cap D(\alpha, \pi)$. Note, for $f(x)\in B$ and $\alpha \in \F_q$, the $n$-th iterate of $f(x+\alpha)-\alpha$ is $f^n(x+\alpha)-\alpha$. Hence, $f(x)$ is in $E(\alpha, \pi)$ if and only if $f^n(x+\alpha)-\alpha$ is squarefree with cycle pattern $\pi$. Since clearly, $f^n(x)-\alpha$ has the same cycle pattern as $f^n(x+\alpha)-\alpha$, we see that $f(x) \in E(\alpha, \pi)$ if and only if $f^n(x)-\alpha$ is squarefree with cycle pattern $\pi$. Hence, for fixed $f(x)$ equation (\ref{cheb}) implies,
	$\#\{\alpha \in \F_q:f(x)\in E(\alpha, \pi)\}=q\rho(\pi)+O(q^{\frac{1}{2}}), \text { for all } f(x) \in B$.
	
	Thus,  \[\sum_{\alpha \in \F_q} \#E(\alpha, \pi)=\sum_{f(x) \in B} \#\{\alpha \in \F_q:f(x)\in E(\alpha, \pi)\}= q^{d+1}\rho(\pi)+O(q^{d+\frac{1}{2}}).\]
	By equation (\ref{B}), we see that  $\#D(\alpha, \pi)=\#E(\alpha, \pi) + O(q^{\frac 1 2})$. Hence, 
	\[\sum_{\alpha \in \F_q} \#D(\alpha, \pi)= q^{d+1}\rho(\pi)+O(q^{d+\frac 1 2}).\]
	The desired result follows easily from equation (\ref{D}),
	\[\#A(\pi)=q^d\rho(\pi) + O(q^{d-\frac 1 2}).\]
\end{proof}

\begin{remark}  
	
	\begin{enumerate}
		\item  In the cases $\pi=(1)^{d^n}$ and $\pi=(d^n)^1$, corresponding to the cases where $f^n(x)$ splits completely into distinct monic factors and $f^n(x)$ is irreducible, $\rho(\pi)$ is easy to calculate. Clearly we have, $\rho((1)^{d^n})=(\#[S_d]^n)^{-1}$, while and inductive argument on $n$ gives $\rho((d^n)^1)=d^{-n}$. There is a formula due to Polya \cite{Polya}, which allows one to calculate $\rho(\pi)$ for every cycle pattern $\pi$ of $S_{d_n}$ (see \cite{Tomescu}). However, this formula is complicated.
		\item This result definitely does not hold for $\ch \overline{\F}_q=d=2$, since it is easily seen that $f^3(x)$ is always reducible in $\F_q[x]$ when $q$ is even and $\deg f =2$, whereas $\rho(\pi)>0$ for $\pi=(d^n)^1$ in $S_{d_n}$.
	\end{enumerate}
\end{remark}


\providecommand{\bysame}{\leavevmode\hbox to3em{\hrulefill}\thinspace}
\providecommand{\MR}{\relax\ifhmode\unskip\space\fi MR }
\providecommand{\MRhref}[2]{%
	\href{http://www.ams.org/mathscinet-getitem?mr=#1}{#2}
}
\providecommand{\href}[2]{#2}

\end{document}